\documentclass[10pt]{amsart}
\usepackage{lipsum}
\usepackage{mwe}

\usepackage{amsmath,amssymb,amsthm,amscd,amsfonts}
\usepackage{fullpage}
\usepackage{graphicx}
\usepackage{stmaryrd}
\usepackage{float}
\usepackage{hyperref}

\usepackage{algorithm}							
\usepackage[noend]{algpseudocode}					
\usepackage{caption}
\usepackage[margin=1in]{geometry}
\usepackage{url}
\usepackage{xcolor}

\usepackage{mathtools}

\usepackage{amssymb,latexsym}
\usepackage{amsmath,amsfonts,amsthm,amscd,amsxtra,enumerate}
\usepackage{mathtools}
\usepackage{caption}
\usepackage[all]{xy}

\usepackage[margin=1in]{geometry}
\usepackage{epsfig}
\usepackage{pgf,tikz}
\usepackage{multicol}
\usepackage[utf8]{inputenc}
\usepackage{subcaption}

\usepackage{graphicx} 

\usepackage{float} 

\usepackage{tikz}

\usepackage{algpseudocode}
\usepackage{algorithm}
\usepackage{dsfont}

\DeclareMathOperator{\ALPH}{Alph}

\newtheorem{conjecture}{ Conjecture}[section]
\newtheorem{theorem}[conjecture]{ Theorem}
\newtheorem{lemma}[conjecture]{ Lemma}
\newtheorem{observation}[conjecture]{ Observation}
\newtheorem{corollary}[conjecture]{ Corollary}
\newtheorem{proposition}[conjecture]{ Proposition}
\theoremstyle{definition}
\newtheorem{remark}[conjecture]{ Remark}

\newtheorem{question}{ Question}

\newtheorem{definition}[conjecture]{ Definition}

\newtheorem{example}[conjecture]{ Example}

\providecommand\max{\text{\rm max}}

\allowdisplaybreaks

\begin{document}

\title{Bounds on the closed-rich constant of infinite words}

\author{Anuran Maity $^{1, 2}$, Svetlana Puzynina $^{2}$
}

\address
	{$^{1,2}$Department of Mathematics\\ 
	 Indian Institute of Technology Guwahati, 
	  Guwahati, India}
      \email{anuran.maity@gmail.com}

\address
	{$^2$Saint Petersburg State University, 7/9 Universitetskaya nab., 199034 St. Petersburg, Russia}
    \email{s.puzynina@gmail.com}

\keywords{ Closed word, closed-rich word,  Fibonacci word, return word }

\begin{abstract} 

A finite word $w$ is called \textit{closed} if it has length at most 1 or it contains a proper factor that occurs both as a prefix and as a suffix but does not have internal occurrences in $w$. An infinite word $u$ is called \textit{closed-rich} if 
 the infimum of all possible ratios between the number of closed factors within any factor $w$ of $u$ and square of the length of $w$ exists and is positive. We define this infimum as the closed-rich constant $C_u$ of the infinite closed-rich word $u$.  Puzynina and Parshina (2024) proved that infinite closed-rich words exist. 
 In this paper, we study possible values of closed-rich constants of infinite closed-rich words. In particular, we estimate the supremum $C_{sup}$ of the closed-rich constants of infinite closed-rich words: we show that 
$C_{sup} \leq 0.165952$. Besides that, we study the closed-rich constant $C_f$ of the  Fibonacci word $f$ and show that $ 0.09519 \leq C_f\leq 0.10893 $. In particular, this gives a lower bound for $C_{sup}$: $ 0.09519 \leq C_{sup}$. 
\end{abstract}
\maketitle

\section{Introduction}

We investigate a problem related to the distribution of closed factors in infinite closed-rich words. A finite word $w$ is called \textit{closed} if it has length at most $1$ or it contains a proper factor that occurs both as a prefix and as a suffix but does not have internal occurrences in $w$. The notion of a closed word is closely related to the notion of a return word. 
The concept of a first return to a factor, often called ``return word", can be seen as a discrete counterpart to the first return map in dynamical systems. It serves as a powerful tool for analyzing symbolic dynamical systems and related fields (see \cite{durand1999substitutional}, \cite{vuillon2001characterization}).
A lot of study has been done on closed words from various perspectives, such as the occurrences of closed factors in prefixes of Sturmian words \cite{de2013open}, the relationship between closed factors and palindromic factors in a word \cite{badkobeh2015number}, closed factorizations of a word \cite{badkobeh2016closed}, reconstruction of a string from its longest closed factor array \cite{bannai2015efficient}, expressing the property of being closed  in first-order logic for automatic sequences \cite{schaeffer2016closed}, closed complexities for the family of Arnoux–Rauzy words \cite{parshina2019open}, a refinement of the Morse–Hedlund theorem  using closed complexities \cite{parshina2020open}, etc.  
Possible numbers of closed factors in a finite word have been studied in \cite{badkobeh2015number}. Most recently, Parshina and Puzynina \cite{parshina2024finite} provided the asymptotics for the maximum number of distinct closed factors a finite word can contain, and introduced the concept of closed-rich words for infinite words.

The study of words rich in specific types of factors is an important topic in modern combinatorics on words. Glen et al. \cite{glen2009palindromic} investigated finite and infinite words rich in palindromes. Bannai et al.  \cite{bannai2017runs} resolved the renowned `runs conjecture', which concerns finite words rich in maximal repetitions. Fici et al.  \cite{fici2017abelian} explored finite and infinite words rich in abelian squares. Recently, Brlek and Li 
\cite{DBLP:journals/combtheory/BrlekL25} solved a long standing conjecture which focuses finite words rich in squares.
Similarly to the concept of these rich words, Parshina and Puzynina \cite{parshina2024finite} defined a word as closed-rich if it is rich in distinct closed factors, i.e., it contains the maximal number of distinct closed factors among words of the same length. They showed that a finite closed-rich word of length $n$ contains asymptotically $\sim \frac{n^2}{6}$ distinct closed factors. An infinite word $u$ is called closed-rich if the constant  $ C_u = \inf \left\{ \frac{ \mathtt{Cl}(w)}{|w|^2} ~:~ w \in  Fac(u)  \right\}$ is positive.
Here $C_u$ is called the closed-rich constant of $u$.
In  \cite{parshina2024finite}, the authors
asked a question about the supremum 
$C_{sup}$ of the closed-rich constants of infinite closed-rich words. Since a finite closed-rich word contains $\sim \frac{n^2}{6}$ distinct closed factors, one has $C_{sup} \leq \frac{1}{6}$. 

In this paper, we show that $C_{sup} \leq 0.165952 $. We remark that we improve the bound obtained in the conference version of this paper \cite{DBLP:conf/cwords/MaityP25}. Besides that, we found a lower bound for  $C_{sup}$ using the Fibonacci word, which has been shown to be closed-rich in \cite{parshina2024finite}. In this paper, we prove that for the Fibonacci word $f$ we have $ \frac{\phi^3+3}{\phi^9} < C_f  \leq  \frac{5\phi+3}{\phi^2(\phi^3+2)^2}$, where $\phi = \frac{1+\sqrt{5}}{2}$ is the golden ratio. The upper bound for $C_f$ is also improved compared to \cite{DBLP:conf/cwords/MaityP25}; moreover, we conjecture that our new  upper bound is actually $C_f$. Thus, we have the following upper and lower bounds for   $C_{sup}$:
  $0.09519 <  C_{sup} \leq 0.165952 $.

The infinite Fibonacci word 
is the most famous Sturmian word providing essential insights to the properties of all Sturmian words. 
Parshina and Puzynina \cite{parshina2024finite} conjectured that finite closed-rich words are cubes or words of exponent close to 3.
Since every sufficiently long finite factor of $f$ always contains sufficiently long factors with an exponent close to $3$
and $f$ often serves as an extremal case and an example in combinatorics on words, 
$f$ is one of the best possible candidates for estimating $C_{sup}$.
 In addition, the techniques and observations developed for the Fibonacci word could be useful for studying other Sturmian words by utilizing the standard sequences associated with them.

The paper is organized as follows. In the next section, we give necessary definitions and notation. In Section \ref{sec3}, we prove the upper bound for $C_{sup}$. In Section \ref{sec4}, we discuss the Fibonacci word. In particular, we provide an explicit formula for the number of closed words in its prefixes, and find  lower and upper bounds for its closed-rich constant mentioned above.
In Section \ref{sec7}, we discuss the bounds of $C_{sup}$, propose some conjectures and open questions.

\section{Preliminaries}\label{sec2}

In this section, we introduce necessary definitions and notation. 
An \textit{alphabet} $\Sigma$ is a finite set of letters. A finite (resp., infinite) \textit{word} $w=w_1w_2w_3 \cdots$ on $\Sigma$ is a finite (resp., infinite) sequence of symbols from $\Sigma$. We let $\ALPH(w)$ denote the set of letters of $w$.
For a finite word $w=w_1 w_2 \cdots w_n$, its \textit{length} is $|w|=n$. We let $\lambda$ denote the \textit{empty word}, and we set by convention $|\lambda|=0$. The set of all finite words over $\Sigma$ is denoted by $\Sigma^*$; the set of all non-empty words over $\Sigma$ is denoted by $\Sigma^+=\Sigma^*\setminus \{\lambda\}$, and the set of all words of length $n$ over $\Sigma$ is denoted by $\Sigma^n$. 
The \textit{reversal} of $w=w_1 w_2 \cdots w_n$ is the word $w^R=w_n w_{n-1} \cdots w_2 w_1$. If $w=w^R$, then $w$ is called a \textit{palindrome}. A word $y$ is a \textit{factor} of a word $u$  if $y = u_i u_{i+1} u_{i+2} \cdots u_{j-1}$ for some positive integers $i, j$ with $i \leq j$. The number $i$ is called an \textit{occurrence} of the factor $y$ in $u$.
In particular, if $i=j$, then the factor $y$ is the empty word $\lambda$ and any index $i$ is its occurrence. The set of all factors of $u$ is denoted by $Fac(u)$. If $w=xux'$ for some $x, u, x' \in \Sigma^*$ and $x=\lambda$ (resp. $x'=\lambda$), then $u$ is a \textit{prefix} (resp. \textit{suffix}) of $w$. The set of all prefixes (resp. suffixes) of $w$ is denoted by $Pref(w)$ (resp. $Suff(w)$). A prefix (resp. suffix) $u$ of $w$ is called \textit{proper} if $u \neq w$ and $u\neq \lambda$. The prefix (resp. suffix) of $w$ of length $i\leq |w|$ is denoted by $Pref_i(w)$ (resp. $Suff_i(w)$).

If a finite word $w$ has a proper prefix $u$ which is also its
suffix, then $u$ is called a \textit{border} of $w$ and $w$ is called a \textit{bordered word}. 
For a finite word $w$ of length $n$, we denote its prefix of length $n-1$ by $w^-$ and its suffix of length $n-1$ by ${}^-w$.
We say that a word $u$ occurs \textit{internally} in $w$ if there exist non-empty words $x$ and $y$ such that $w=xuy$.
A word $u$ is a \textit{conjugate} of $v$ if there exist $x, y \in \Sigma^*$ such that  $v=xy$ and $u=yx$. The set of all conjugates of $u$ is denoted by $C(u)$.
{
If there exists an integer $k$ such that for each $i$ ($i< |w| - k$ if $w$ is finite) the equality $w_i = w_{k+i}$ holds, then $k$ is called a \textit{period} of $w$. 
If $w$ has period $k$ and $k$ is minimal possible, then we say that $w$ has \textit{exponent} $l=\frac{|w|}{k}$, and we write $w=u^l$, where $u$ is the prefix of $w$ of length $k$. The notation $w=u^{k+}$ means that $w$ has exponent $s>k$. 
The word $w$ is \textit{primitive} if is not an integer power of another word.

Let $v = v_0 v_1 v_2 \cdots$ be an infinite word.
Then the \textit{critical exponent} $E(v)$ of $v$ is defined as
$$E(v) = \text{sup}\{e \in \mathbb{Q}  : u^e \text{ is a factor of $v$ for a non-empty word $u$} \}.$$
The \textit{asymptotic critical exponent}  $E^*(v)$ of $v$ is
$$E^*(v) = \limsup_{n \rightarrow \infty} \{ e \in \mathbb{Q} ~:~ u^e \text{ is a factor of $v$ for some } u \text{ of length $n$}\}.$$

A finite word is called \textit{closed} if it has length at most $1$ or it contains a proper factor that occurs as a prefix and as a suffix but does not have internal occurrences. In other words, a closed word of length at least 2 has a border which has exactly two occurrences in the word.
For a finite word $w$, we let $\mathtt{Cl}(w)$ denote the number of distinct closed factors of $w$.
The following upper bound for $\mathtt{Cl}(w)$ is introduced in  \cite{parshina2024finite}:
\begin{theorem}\label{pth}\cite{parshina2024finite}
     For a finite word $w$ of length $n$, 
$\mathtt{Cl}(w) \leq \frac{n^2}{6} + \frac{7}{6}n +1$.
\end{theorem}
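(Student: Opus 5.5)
The plan is to charge each distinct closed factor of $w$ to its last occurrence in $w$, and then bound the number of charges at each end position. For a position $j$ (with $1\le j\le n$), consider the prefix $p=w_1\cdots w_j$. We count the distinct closed words that are suffixes of $p$ and do not occur ending at any earlier position. Writing $N_j$ for this number, we get $\mathtt{Cl}(w)\le 1+\sum_j N_j$, where the extra $1$ accounts for the empty word. The goal is to show that
\[
\sum_{j=1}^{n} N_j\le \frac{n^2}{6}+\frac{7}{6}n,
\]
up to lower-order bookkeeping.

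The key structural facts about closed suffixes come first. A closed word $v$ with border $u$, where $u$ occurs exactly twice, is uniquely determined by its suffix $u$ together with the end position: going left from $j$, $v$ is the shortest suffix of $p$ that contains $u$ twice. So the closed suffixes of $p$ ending at $j$ correspond injectively to suffixes $u$ of $p$ that occur earlier in $p$, via the map $u\mapsto$ (the factor from the previous occurrence of $u$ to $j$). Two facts then limit $N_j$.

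\begin{itemize}
\item \textbf{Novelty.} A closed suffix is new at $j$ only if it has no later occurrence, and this constrains $u$.
\item \textbf{Periodicity.} If two such $u$, say $u_1$ and $u_2$, give closed suffixes $v_1$ and $v_2$ of lengths $\ell_1<\ell_2$, then $v_1$ has period $|v_1|-|u_1|$. Closed words with long borders are highly periodic, with exponent greater than $2$. By the Fine--Wilf theorem, closed suffixes whose periods are comparable must lie in a single run.
\end{itemize}

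The counting argument groups the closed suffixes of $p$ by their return period $q=|v|-|u|$. For a fixed $q$, at most one closed suffix with that period exists (the one with the longest border inside the maximal $q$-periodic suffix). So $N_j$ is at most the number of distinct periods $q$ realized at $j$ by a new closed factor. A distinct closed factor of length $\ell$ and period $q$ requires its border, of length $\ell-q$, to occur only twice. Counting pairs $(\ell,q)$ over the whole word, each maximal run of period $q$ and length $L$ contributes at most about $L-2q$ closed words with period exactly $q$ that are new. Moreover, different lengths $L$ force the constraint $\ell\le n$ with $\ell\ge q+1$. Summing, the count becomes a sum over $q$ of roughly $(n-2q)$-type terms plus contributions of closed words with short borders. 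Optimizing this sum gives $n^2/6$: the extremal behaviour is a cube-like word, in which periods $q$ up to $n/3$ contribute about $n-2q$ each, plus a triangular term, and the total is $\sim n^2/6$. Tracking the boundary terms exactly gives the linear term $\tfrac76 n+1$.

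The main obstacle is the third step: showing rigorously that the periods cannot all be simultaneously ``fully used''. That is, we need that closed factors with periods $q$ and $q'$ cannot both fill lengths up to $n$ unless $q+q'-\gcd(q,q')$ exceeds the overlap. My first attempt would be to encode the closed factors as lattice points $(i,j)$ (start, end) in the triangle $1\le i\le j\le n$. Then I would prove, via Fine--Wilf, that each column $j$ contains at most roughly $j/3$ ``new'' points beyond a linear error, using the fact that a closed factor ending at $j$ whose border occupies more than a third of it forces a repetition of exponent above $2$ that has later occurrences. Summing $j/3$ over $j$ gives $n^2/6$.
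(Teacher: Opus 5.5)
The paper does not prove this theorem (it quotes it from \cite{parshina2024finite}), so your argument has to stand on its own. There is a genuine gap.

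Your set-up is sound. Charging each closed factor to the end of its first occurrence gives $N_j=|t_j|-|z_j|$, as in Lemma~\ref{lem0}; this is what your $N_j$ actually counts, although your text also speaks of last occurrences. Closed suffixes ending at $j$ do correspond bijectively to nonempty repeated suffixes.

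However, every quantitative step meant to produce $n^2/6$ is false, and the extremal word itself refutes them. Let $w=uuu$, where $u=a_1\cdots a_m$ consists of $m$ distinct letters, so $n=3m$ and $\mathtt{Cl}(w)=\frac32m^2+\frac32m+1$.
For $m<j\le 2m$ you have $t_j=a_1\cdots a_{j-m}$ and $z_j=\lambda$, so $N_j=j-m$.
For $2m<j\le 3m$ you have $|t_j|=j-m$ and $|z_j|=j-2m$, so $N_j=m$.

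Hence $N_j>j/3$ for every $3m/2<j<3m$, with an excess of $m/3$ at $j=2m$. For $m=6$, the six words $fabcdef,\dots,abcdefabcdef$ are all new at $j=12$, while $j/3+1=5$. So the column bound in your last step is false. Charging to last occurrences is worse: then every nonempty closed factor of $uuu$ is charged to one of the last $m$ columns.

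The supporting claims fail on the same word.
\begin{itemize}
\item At $j=2m+k$, all $m$ new closed suffixes have the same period $m$. Even $abab$ has two closed suffixes of period $2$, namely $bab$ and $abab$. So ``at most one closed suffix per period'' and ``$N_j$ is at most the number of periods'' are wrong.
\item The single run $uuu$ of period $m$ contains $\frac32m^2+\frac m2$ closed factors of length at least $2$, all of period $m$, not about $L-2q=m$.
\item The picture of each period $q\le n/3$ contributing about $n-2q$ sums to roughly $\frac{2n^2}{9}$, not $\frac{n^2}{6}$. It also misdescribes $uuu$, where one period carries everything.
\item A border longer than a third of the word only forces exponent greater than $\frac32$, not $2$.
\end{itemize}

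What is missing is amortization across positions. With first-occurrence charging, the correct pointwise bound is $N_j\le\min\bigl(|t_j|,\,j-|t_j|\bigr)\le j/2$, because the new closed suffixes ending at $j$ have pairwise distinct lengths in $(|t_j|,j]$. This bound is attained, and summing it gives only $n^2/4$.

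In $uuu$ the total stays at $n^2/6$ because the large columns are paid for by small ones:
\begin{itemize}
\item the first $m$ columns contribute $1$ each;
\item $N_j\le|t_j|$ together with $|t_j|\le|t_{j-1}|+1$ means the next columns can only ramp up with slope $1$;
\item afterwards $N_j$ is capped by the period $m$.
\end{itemize}
A proof must capture this trade-off globally rather than column by column. One direction: use periodicity to show that $N_j$ close to $j/2$ forces $w_1\cdots w_j$ to be nearly a square whose root has no much shorter period, and that such positions are sparse. Alternatively, count in a different way altogether. As written, the proposal does not establish the bound.
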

Besides that, \cite{parshina2024finite} provides examples of words containing $\sim \frac{n^2}{6}$ distinct closed factors, thus proving the asymptotics $\sim \frac{n^2}{6}$ for the maximum number of distinct closed factors in a word. The definition of a closed-rich word can be extended to infinite words as follows.

\begin{definition}\label{eq:C_u}

For an infinite word $u$, we define the following real number $C_u$: 
$$ C_u = \inf \left\{ \frac{ \mathtt{Cl}(w) }{|w|^2} \; \middle| \; w \in  Fac(u), |w|\geq 1 \right\}.$$ 
If $C_u$ is positive, then the word $u$ is called \textit{closed-rich}, and $C_u$ is called the closed-rich constant of $u$.
\end{definition}











\begin{remark} \label{remsup}
In \cite{parshina2024finite}, infinite closed-rich words have been defined in a slightly different although equivalent way. According to  \cite{parshina2024finite}, an infinite word $u$ is closed-rich if there exists a positive constant $C'$ such that for each $n$, each factor of length $n$ of $u$ contains at least $C'n^2$ distinct closed factors. It is not hard to show that the two definitions are equivalent, and in fact the constant $C_u$ from Definition \ref{eq:C_u} can be defined as the supremum of the constants $C'$:
$$ C_u = \sup \left\{ C': \mathtt{Cl}(w)\geq C'|w|^2 \mbox{ for each } w \in  Fac(u)  \right\}.$$

\end{remark}

We now recall a basic fact of combinatorics on words that will be used multiple times in this paper. 

\begin{lemma} \label{4lk}\cite{Schutz62}
Let $u,v,w \in \Sigma^{+}$.
\begin{itemize}
     \item If $uv=vu$, then $u$ and $v$ are powers of a common primitive word. 
     \item If $uv=vw$, then for $k \geq 0$, $x \in \Sigma^{+}$ and $y \in \Sigma^{*}$, $u=xy$, $v=(xy)^{k}x$, $w=yx$.
 \end{itemize}
\end{lemma}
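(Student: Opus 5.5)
This is the classical Lyndon–Schützenberger lemma. Both parts can be proved by induction on total length, using only cancellation of common prefixes.

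\emph{First item.} Argue by induction on $|u|+|v|$.
\begin{itemize}
\item If $|u|=|v|$, comparing prefixes of $uv=vu$ gives $u=v$. Then both are powers of the primitive root of $u$, i.e.\ the shortest $p$ with $u=p^m$.
\item Otherwise, say $|u|<|v|$. Since $u$ is a prefix of $uv=vu$, it is also a prefix of $v$, so write $v=uv'$ with $v'\neq\lambda$.
\item Substituting gives $uuv'=uv'u$. Cancelling the prefix $u$ leaves $uv'=v'u$, a shorter instance of the same equation.
\item By induction, $u=p^a$ and $v'=p^b$ for a primitive word $p$, hence $v=p^{a+b}$.
\end{itemize}
The base case $|u|+|v|=2$ is the equal-length case. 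Uniqueness of the primitive root is not needed, since the statement only asks for some common primitive word.

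\emph{Second item.} Argue by induction on $|v|$ for the equation $uv=vw$. Note that $|u|=|w|$.
\begin{itemize}
\item If $|v|<|u|$, then $v$ is a prefix of $uv$, hence a prefix of $u$. Write $u=vy$, which gives $vyv=vw$, so $w=yv$.
\begin{itemize}
\item If $y\neq\lambda$, take $x=v$ and $k=0$: then $u=xy$, $v=x$, $w=yx$.
\item The case $y=\lambda$ cannot occur here, because $|v|<|u|$ forces $y\neq\lambda$.
\end{itemize}
\item If $|v|\ge|u|$, then $u$ is a prefix of $vw$ and $|u|\le|v|$, so $u$ is a prefix of $v$. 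Write $v=uv'$.
\begin{itemize}
\item Then $uuv'=uv'w$, and cancelling $u$ gives $uv'=v'w$ with $|v'|<|v|$.
\item If $v'=\lambda$, then $u=w$ and $v=u$. Take $x=u$, $y=\lambda$, $k=0$; this is allowed since $y\in\Sigma^*$.
\item Otherwise, the inductive hypothesis gives $u=xy$, $v'=(xy)^kx$, $w=yx$, and so $v=(xy)^{k+1}x$.
\end{itemize}
\end{itemize}

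The only real subtlety is bookkeeping at the degenerate steps. One must keep $x$ nonempty while allowing $y=\lambda$, and this is handled by the base cases $v'=\lambda$ and $|v|<|u|$ above. Everything else is routine cancellation, so I expect no genuine obstacle.
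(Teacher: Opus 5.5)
Your proof is correct. Both inductions are well-founded, since each cancellation strictly shortens the instance, and the degenerate situations $|u|=|v|$, $|v|<|u|$ and $v'=\lambda$ are handled directly while keeping $x\neq\lambda$. The paper does not prove this lemma itself; it only cites the classical Lyndon--Sch\"utzenberger result, and your cancellation argument is the standard textbook proof of that result.
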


We will use the following notation. We let $\mathtt{Cl}'(w)$ denote the number of closed factors of $w$ of length at least 2 (“long” closed factors), and $\mathtt{Cl}^{0}(w)$ denote the number of closed factors of $w$ of length at most $1$ (“short” closed factors). So,
$\mathtt{Cl}(w) = \mathtt{Cl}'(w) + \mathtt{Cl}^{0}(w)$. If $w$ is closed and $v$ is its longest border, then we write $\mathtt{LB}(w)=v$. 

\section{Upper bound for the closed-rich constant of any infinite closed-rich word }\label{sec3}
In this section, we 
show that  the closed rich constant of any infinite closed-rich word is smaller than or equal to $ 0.165952 $. 
We will use the following fact:

\begin{lemma}\label{lem0}
    Let $w, u \in \Sigma^+$ such that $\ALPH(u)=\ALPH(w)$ and $w=ux$ where $x$ is a letter. If $t$ is the largest repeated suffix of $w$ and $z$ is the largest repeated suffix of $t$, then $\mathtt{Cl}(w)- \mathtt{Cl}(u) = |t| - |z|.$ 
\end{lemma}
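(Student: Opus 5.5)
The plan is to count the closed factors of $w$ that are not factors of $u$. Since $u=w^-$, the factors of $w$ not in $Fac(u)$ are exactly the suffixes of $w$ that occur in $w$ only once, namely as a suffix. These are the suffixes of $w$ longer than $|t|$, where $t$ is the longest repeated suffix. A suffix $s$ of length at most $|t|$ is a suffix of $t$, and it also occurs at the earlier occurrence of $t$, which lies inside $u$. Conversely, a suffix longer than $t$ is unrepeated, so it occurs only at the end of $w$ and is not a factor of $u$. Because $\ALPH(u)=\ALPH(w)$, every new factor has length at least $2$: the letter $x$ already occurs in $u$. So $\mathtt{Cl}(w)-\mathtt{Cl}(u)$ is the number of lengths $m$ with $|t|<m\le |w|$ such that $s_m=Suff_m(w)$ is closed. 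I will show that this happens exactly for $|t|<m\le 2|t|-|z|$, which gives $|t|-|z|$ values.

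\emph{Characterizing closedness of a unioccurrent suffix $s$.} Suppose $s$ is closed with border $b$ occurring exactly twice in $s$. Then $b$ is a suffix of $s$, hence of $w$, and $b$ occurs at least twice in $w$, so $b$ is a suffix of $t$. Next I show $|b|>|z|$. If $b$ were a suffix of $z$, consider the two occurrences of $t$ in $w$: the final one and an earlier one. The earlier occurrence cannot start before $s$ starts, since otherwise $s$, being longer than $t$ yet contained in... this needs care. I will instead compare positions: $t$ ends with $z$, and $z$ occurs at least twice in $t$. The final $t$ is a suffix of $s$, because $|s|>|t|$. So $z$ occurs twice inside the final $t$, and $b$ (a suffix of $z$) then occurs at least twice inside the final $t$ and also as a prefix of $s$. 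If $|s|>|t|$, the prefix occurrence of $b$ starts before the final $t$, which gives at least three occurrences of $b$, a contradiction. Hence $|z|<|b|\le|t|$.

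\emph{Upper bound on $|s|$.} Since $b$ is a suffix of $t$ longer than $z$, $b$ occurs in $t$ only as its suffix. I claim the prefix occurrence of $b$ in $s$ must end at or after the start of the final $t$; otherwise the two occurrences of $b$ sit in $s$ at the prefix and at the end, and the occurrence of $b$ at the end of the earlier $t$-occurrence would have to be one of them. Combining these gives $|s|\le |t|+|t|-|b|$, which is at most $2|t|-|z|-1$ plus a boundary adjustment. I need to get the off-by-one right. The natural maximal example is $s$ starting where $z$ begins inside... I would verify this on small examples, for instance $w=aab$ with $t$ empty.

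\emph{Lower bound.} For each $m$ with $|t|<m\le 2|t|-|z|$ I need a closing border. I will take $b$ to be the suffix of $t$ of length $2|t|-m+?$ and check that it is a prefix of $s_m$. This uses the earlier occurrence of $t$: the suffix of length $|t|$ of $u$-part... Honestly, the cleanest route is the standard fact (from the open/closed literature, e.g. Badkobeh et al.) that a word $s$ is closed iff its longest repeated prefix equals its longest border appearing exactly twice. I would use this to show that the borders range over the suffixes of $t$ of lengths in $(|z|,|t|]$, each giving exactly one closed $s$: namely $s$ runs from the previous occurrence of $b$ to the end. This gives a bijection between those lengths and the closed new suffixes, so the count is $|t|-|z|$.

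\emph{Main obstacle.} The hard step is proving that this map $b\mapsto s$ is a bijection. Each $b$ that is a suffix of $t$ with $|b|>|z|$ is unioccurrent inside $t$. Its previous occurrence in $w$ determines a suffix $s$ that is closed with border $b$. That $s$ must be longer than $t$, since $b$ is unioccurrent in $t$. I also need injectivity: distinct $b$ give distinct $s$, because the longest such border is determined by $s$. I expect this second part to need Lemma \ref{4lk} to rule out overlaps.
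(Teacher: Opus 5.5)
Your announced target is false. The new closed suffixes are \emph{not} those of length $m$ with $|t|<m\le 2|t|-|z|$, and the bound $|s|\le 2|t|-|b|$ you try to derive for them is false too. Take $w=abcbdab$ and $u=abcbda$. Then $t=ab$ and $z=\lambda$. The new closed factors are $bdab$ (closing border $b$) and $abcbdab$ (closing border $ab$), of lengths $4$ and $7$. Your range predicts $\{3,4\}$, but $dab$ is not closed. The step ``the prefix occurrence of $b$ must end at or after the start of the final $t$'' has no justification. The occurrence of $b$ ending the earlier copy of $t$ can simply \emph{be} the prefix occurrence of $b$ in $s$, as it is here, and the previous occurrence of $b$ can lie arbitrarily far to the left. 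The lengths of the new closed suffixes depend on where the previous occurrences of the $b$'s sit, not on $|t|$, and they need not form an interval. So the ``Upper bound'' paragraph and your first attempt at the ``Lower bound'' (with $b$ of length $2|t|-m+?$) must be discarded.

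The fallback you describe afterwards is correct and is essentially the paper's proof. The paper indexes the closed suffixes of $w$ by their longest border $\mathtt{LB}$, notes that every suffix of $t$ arises this way, and observes that the new ones are exactly those with $|\mathtt{LB}|>|z|$. Your pieces already give both directions:
\begin{itemize}
\item A new closed $s$ has a closing border $b$ with $|z|<|b|\le|t|$; your three-occurrence argument for this is right.
\item For such $b$, the suffix $s_b$ of $w$ starting at the previous occurrence of $b$ is closed. It is longer than $t$ because $b$ occurs only once in $t$.
\end{itemize}
The remaining step is not the hard one and needs no Lemma~\ref{4lk}: a closed word has only one border occurring exactly twice. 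Suppose $b_1,b_2$ were two such borders with $|b_1|<|b_2|$. Then $b_1$, being a suffix of $b_2$, occurs in $s$ ending at position $|b_2|$, and this is neither its prefix nor its suffix occurrence. Hence the map $s\mapsto b$ is well defined. Moreover, the only occurrences of $b$ starting inside the suffix $s$ of $w$ are its prefix and suffix ones, so $s=s_b$. Therefore $b\mapsto s_b$ is a bijection from the $|t|-|z|$ suffixes of $t$ with length in $(|z|,|t|]$ onto the new closed factors.
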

\begin{proof}
    Since $\ALPH(u)=\ALPH(w)$, there always exists a non-empty repeated suffix of $w$ and  $\mathtt{Cl}(w)- \mathtt{Cl}(u)= \mathtt{Cl}'(w)- \mathtt{Cl}'(u)$.
    Then, $\mathtt{Cl}'(w)- \mathtt{Cl}'(u) \leq |t| - |z|$. Now, the repetition of $t$ implies that for each suffix $p$ of $t$, there always exists a closed factor $\beta$ of $w$ such that $\mathtt{LB}(\beta)=p$.
    
    Now, we show that $\mathtt{Cl}'(w)- \mathtt{Cl}'(u) = |t| - |z|$. 
    Assume the converse, i.e., that $\mathtt{Cl}'(w)- \mathtt{Cl}'(u) < |t| - |z|$. Then, there exists a long closed factor $\alpha$ of $u$ such that $\alpha$ is a suffix of $w$ and $|z|+1 \leq |\mathtt{LB}(\alpha)| \leq |t|$.
   Since $t$ is the largest repeated suffix of $w$, we have $|\alpha| \leq |t|$. But as $|\mathtt{LB}(\alpha)| \geq |z|+1$ and $\alpha$ is a suffix of $w$, $z$ is not the largest repeated suffix of $t$, which is a contradiction.
    Thus,  there does not exist any long closed factor $\alpha$ of $u$ such that $\alpha$ is a suffix of $w$ and $|z|+1 \leq |\mathtt{LB}(\alpha)| \leq |t|$. Therefore, $\mathtt{Cl}'(w)- \mathtt{Cl}'(u) = |t| - |z|$.
\end{proof}

We will also use a weaker form of Lemma \ref{lem0}:

\begin{corollary}\label{cor0}
   Let $w, u \in \Sigma^+$ such that $\ALPH(u)=\ALPH(w)$ and $w=ux$, where $x$ is a letter. If $t$ is the largest repeated suffix of $w$, then $\mathtt{Cl}(w)- \mathtt{Cl}(u) \leq|t|$. 
\end{corollary}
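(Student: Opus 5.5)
The plan is to read Corollary \ref{cor0} off Lemma \ref{lem0}, whose hypotheses are exactly the same: $\ALPH(u)=\ALPH(w)$ and $w=ux$ with $x$ a letter.

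First, I check that the word $z$ of Lemma \ref{lem0} exists. Because $\ALPH(u)=\ALPH(w)$, the last letter $x$ of $w$ already occurs in $u$. So $x$ is a repeated suffix of $w$, and hence the largest repeated suffix $t$ is non-empty. Every word has a largest repeated suffix, possibly the empty word $\lambda$, which occurs at every index. So the largest repeated suffix $z$ of $t$ is well defined, with $|z|\ge 0$.

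Lemma \ref{lem0} then gives
\[
\mathtt{Cl}(w)-\mathtt{Cl}(u)=|t|-|z|\le |t|,
\]
since $|z|\ge 0$. This is the claimed bound.

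There is no substantive obstacle. The only point needing care is confirming that Lemma \ref{lem0} is meant to apply even when $z=\lambda$; its proof treats $z$ only through the length inequality $|z|+1\le|\mathtt{LB}(\alpha)|$, which stays meaningful with $|z|=0$.

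As a sanity check independent of the lemma, I would also give a direct argument. Every closed factor of $w$ that is not a factor of $u$ must be a suffix of $w$, since any other occurrence would lie inside $u$. Such a suffix has length at least 2, because the letter $x$ already occurs in $u$. By the definition of closedness, its longest border $\mathtt{LB}$ is a proper suffix that also occurs earlier in $w$. So $\mathtt{LB}$ is a non-empty repeated suffix of $w$, and therefore a suffix of $t$.

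Distinct new closed suffixes of $w$ have distinct lengths. A closed word is determined by its longest border together with the requirement that this border occur exactly twice, so distinct new closed suffixes have distinct longest borders, each a non-empty suffix of $t$. This gives an injection into the $|t|$ non-empty suffixes of $t$, which again yields $\mathtt{Cl}(w)-\mathtt{Cl}(u)\le|t|$.
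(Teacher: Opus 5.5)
Your proof is correct and matches the paper, which states Corollary \ref{cor0} without a separate proof as the weaker form of Lemma \ref{lem0}: $\mathtt{Cl}(w)-\mathtt{Cl}(u)=|t|-|z|\le |t|$ since $|z|\ge 0$. Your supplementary direct argument is also sound: each new closed suffix of $w$ is sent injectively to its longest border, a non-empty suffix of $t$, because the closed suffix must begin at the penultimate occurrence of that border; this is essentially the counting that underlies Lemma \ref{lem0} itself.
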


 The following lemma is symmetric to Lemma \ref{lem0}:

\begin{lemma}\label{lem01}
    Let $w, u \in \Sigma^+$  such that $\ALPH(u)=\ALPH(w)$ and $w=xu$ where $x$ is a letter. If $p$ is the largest repeated prefix of $w$ and $q$ is the largest repeated prefix of $p$, then $\mathtt{Cl}(w)- \mathtt{Cl}(u) = |p| - |q|.$
\end{lemma}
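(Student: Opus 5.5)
The plan is to reduce Lemma \ref{lem01} to Lemma \ref{lem0} by reversal. Let $w=xu$ with $x$ a letter and $\ALPH(u)=\ALPH(w)$. Reversing gives $w^R=u^Rx$ with $\ALPH(u^R)=\ALPH(w^R)$, which is exactly the setting of Lemma \ref{lem0}.

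\textbf{Step 1: reversal preserves closedness.} I first check that $w\mapsto w^R$ maps closed words to closed words. A word of length at most $1$ is its own reversal. If $|w|\ge 2$ and $w$ is closed, it has a border $v$ with exactly two occurrences in $w$, one as a prefix and one as a suffix. Then $v^R$ is a border of $w^R$. Occurrences of $v$ in $w$ correspond bijectively to occurrences of $v^R$ in $w^R$, with positions mirrored, so $v^R$ also has exactly two occurrences and $w^R$ is closed. Since $Fac(w^R)=\{y^R : y\in Fac(w)\}$ and reversal is an involution, $\mathtt{Cl}(w^R)=\mathtt{Cl}(w)$ for every finite $w$. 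In particular $\mathtt{Cl}(w)-\mathtt{Cl}(u)=\mathtt{Cl}(w^R)-\mathtt{Cl}(u^R)$.

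\textbf{Step 2: match the parameters.} Here "repeated prefix" means a prefix with at least two occurrences, and "repeated suffix" is defined symmetrically. A word $p$ is a prefix of $w$ occurring at least twice in $w$ if and only if $p^R$ is a suffix of $w^R$ occurring at least twice in $w^R$. Hence the largest repeated suffix of $w^R$ is $t=p^R$. Applying the same correspondence inside $p$, the largest repeated suffix of $t=p^R$ is $z=q^R$. Lemma \ref{lem0} applied to $w^R=u^Rx$ then gives
\[
\mathtt{Cl}(w^R)-\mathtt{Cl}(u^R)=|t|-|z|=|p|-|q|,
\]
and combining this with Step 1 proves the claim.

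\textbf{Main difficulty.} There is no real obstacle. The only care needed is the bookkeeping in Step 1, namely that the count of occurrences, and not just the border property, is preserved under reversal. This follows because reversal mirrors positions one-to-one.
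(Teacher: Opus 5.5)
Your proof is correct and follows the paper's approach: the paper gives no separate proof and only states that Lemma \ref{lem01} is symmetric to Lemma \ref{lem0}. Your reversal reduction is the rigorous form of that symmetry. Reversal preserves closedness and occurrence counts, so $\mathtt{Cl}(w)=\mathtt{Cl}(w^R)$, and $p,q$ become the largest repeated suffixes $p^R,q^R$ of $w^R=u^Rx$, which is exactly the setting of Lemma \ref{lem0}.
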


\begin{lemma}\label{lem02}
    Let $w$ and $u$ be two non-empty words such that $\ALPH(u) \neq \ALPH(w)$.
    \begin{itemize}
        \item If $w=ux$ for some $x \in \Sigma$, then $\mathtt{Cl}(w)- \mathtt{Cl}(u) = 1$.
        \item If $w=xu$ for some $x \in \Sigma$, then $\mathtt{Cl}(w)- \mathtt{Cl}(u) = 1$.
    \end{itemize}
\end{lemma}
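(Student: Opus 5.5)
We prove the first item directly; the second follows by the symmetric argument, or by applying the first item to reversals, since a word is closed iff its reversal is closed. The plan is to show that the only closed factor of $w$ that is not a factor of $u$ is the single letter $x$.

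Suppose $w=ux$ with $\ALPH(u)\neq\ALPH(w)$. Since $\ALPH(w)=\ALPH(u)\cup\{x\}$, this forces $x\notin\ALPH(u)$, so the last letter $x$ occurs exactly once in $w$. Every factor of $w$ is either a factor of $u$ or a suffix of $w$. So the closed factors of $w$ that are not factors of $u$ are exactly the closed suffixes of $w$ that do not occur in $u$. Since any suffix of $w$ ends with $x$ and $x\notin\ALPH(u)$, no nonempty suffix of $w$ occurs in $u$. Hence
\[
\mathtt{Cl}(w)-\mathtt{Cl}(u)=\#\{\text{nonempty closed suffixes of } w\}.
\]

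Next we count these suffixes. The suffix $x$ of length $1$ is closed by definition, so it contributes exactly $1$. Now take a suffix $s$ of $w$ with $|s|\geq 2$. If $s$ were closed, it would have a nonempty border $v$ occurring as both a prefix and a suffix of $s$. The suffix occurrence of $v$ ends with $x$, so $x\in\ALPH(v)$. The prefix occurrence of $v$ lies within the first $|v|\le |s|-1$ letters of $s$, which are letters of $u$, so $x\in\ALPH(u)$. This is a contradiction, so no suffix of length at least $2$ is closed. Therefore the difference equals $1$.

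The empty word needs a brief check. By the paper's definition, length-$\le1$ words, including $\lambda$, are closed, and $\lambda$ is a factor of both $u$ and $w$, so it cancels in the difference. There is no real obstacle here; the only step needing care is the observation that $\ALPH(u)\neq\ALPH(w)$ forces the new letter to be absent from $u$.
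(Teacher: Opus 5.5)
Your proof is correct and follows the same approach as the paper: $\ALPH(u)\neq\ALPH(w)$ forces $x\notin\ALPH(u)$, so the only new closed factor of $w$ is the letter $x$. You also justify the step the paper leaves implicit, namely that no suffix of length at least $2$ can be closed, because its border would contain $x$ and would also have to occur inside $u$.
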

\begin{proof}
    For both cases, since $\ALPH(u) \neq \ALPH(w)$, $x \notin \ALPH(u)$. This implies $\mathtt{Cl}(w)= \mathtt{Cl}(u) + 1$, where the new factor in $w$ compare to $u$ is the letter $x$.
\end{proof}

The main result of this section is the following theorem giving an upper bound for the closed-rich constant of infinite closed-rich words.

\begin{theorem}\label{mthsd}
The  closed-rich constant of an infinite closed-rich word  is less than or equal to $\frac{967}{5827}$.
\end{theorem}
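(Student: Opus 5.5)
The goal is to show that for any infinite closed-rich word $u$ and any $\varepsilon>0$ there is a factor $w$ of $u$ with $\mathtt{Cl}(w)\le (\frac{967}{5827}+\varepsilon)|w|^2$. The idea is that Theorem \ref{pth} already gives $\mathtt{Cl}(w)\lesssim |w|^2/6$. Equality can only be approached by words that look like cubes (exponent close to $3$), and an infinite word cannot consist of such structures at every scale and every position. So I would fix a long factor $w$ of length $n$ and compare it with a slightly longer or shorter factor, using Lemmas \ref{lem0}, \ref{lem01} and \ref{lem02} together with Corollary \ref{cor0}. These control $\mathtt{Cl}$ letter by letter through the increments $|t|-|z|$, where $t$ is the longest repeated suffix and $z$ is the longest repeated suffix of $t$.

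\textbf{Step 1 (increments).} Extending a word $w'$ letter by letter to the right up to length $m$ gives
\[\mathtt{Cl}(w)\le \mathtt{Cl}(w')+\sum (|t_i|-|z_i|).\]
Each $|t_i|$ is at most the current length, and the sum is also bounded by the number of new long closed factors. The $n^2/6$ extremal behaviour arises only if the repeated suffixes are long, about $2/3$ of the current length, over long stretches. I would record this as a quantitative statement: if $\mathtt{Cl}(w)\ge (1/6-\delta)n^2$, then $w$ contains a factor of length $\approx \alpha n$ with a period $p\approx \beta n$. I would extract this either from the proof of Theorem \ref{pth} or by a direct recount using Lemma \ref{lem0}. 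The idea is that long repeated suffixes force overlapping occurrences, and Lemma \ref{4lk} converts those overlaps into periodicity.

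\textbf{Step 2 (optimizing the window).} Now take the periodic structure $v^e$ with $e$ near $3$ inside a factor of $u$. Consider the factor $w$ obtained by extending it on one side by a carefully chosen proportion $\gamma|v|$, and count its closed factors using the increment lemmas. Closed factors created in the periodic part are counted exactly, as in the cube analysis. The letters added after the periodic part leave the period, so their longest repeated suffix has length at most about $|v|$ plus a bounded overlap. This gives an upper bound for $\mathtt{Cl}(w)/|w|^2$ as a rational function of the parameters $e$, $\gamma$ and the overlap ratio. The adversary chooses the word, so I take the maximum over the word's parameters and then the minimum over my window choice. I expect the optimum to be attained at rational values that produce exactly $\frac{967}{5827}$.

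\textbf{Main obstacle.} The hard step is the case analysis in Step 2. I must bound the increments after the period breaks without knowing $u$. In particular, a repeated suffix could reoccur far to the left rather than inside the periodic block. I would handle this by bounding $|t_i|$ by the distance to a previous occurrence, and then use the fact that two occurrences of a long word within a window of limited length force a second period. Fine and Wilf, via Lemma \ref{4lk}, makes that second period incompatible with the first unless it is a multiple of $|v|$. The bookkeeping of these cases, and checking that the worst case gives exactly $\frac{967}{5827}$, is where I expect most of the effort to go.
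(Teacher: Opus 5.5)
\textbf{There is a genuine gap.} Step 1 is the load-bearing step, and nothing available supports it.

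You need a quantitative stability theorem: every long word $w$ with $\mathtt{Cl}(w)\ge(\frac16-\delta)|w|^2$ contains a linear-length factor of exponent near $3$. Theorem \ref{pth} is only a counting upper bound and says nothing about near-equality. Lemmas \ref{lem0} and \ref{4lk} do not give it either. Even the exact statement that extremal finite words are (close to) cubes is, as the introduction notes, only a conjecture.

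Moreover, you would need it with the specific $\delta=\frac16-\frac{967}{5827}=\frac{25}{34962}$. The exponent threshold it produces would also have to be high enough that your extension then gives a ratio at most $\frac{967}{5827}$. A qualitative ``near-extremal implies near-cube'' only yields $C_u<\frac16$.

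The words your plan cannot handle are those whose long repetitions all have exponent at most about $2.5$. They contain no long near-cubes, so Step 2 has nothing to work on. This is exactly where the worst case lies: the bound $\frac{967}{5827}$ comes from exponent $2.5$ (Proposition \ref{thdf1} at $t=1/\alpha=0.4$), not from near $3$. For exponents above $2.5$, a local argument essentially like your Step 2 (Proposition \ref{cont1}) already gives $\frac{8}{49}<\frac{967}{5827}$. A side correction: after the period breaks, the repeated suffix can have length up to $|v|+i-1$, not $|v|$ plus a bounded amount. What is bounded by $|v|$ is the difference $|t|-|z|$.

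\textbf{The missing idea.} Parametrize by the asymptotic critical exponent $\alpha$ of $u$ itself, not by near-extremality of factors. By definition, $u$ has linear-length factors of exponent $\alpha$ or arbitrarily close to it. Crucially, every factor of $u$ of exponent $>\alpha+\epsilon$ has bounded length; this is the global constraint Step 2 lacks.

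The paper takes such a factor $w$ of length $n$, extends it by $\xi n$ letters, and bounds each increment $|t^{(i)}|-|z^{(i)}|$ via Lemma \ref{lem0}. Every configuration in which the two rightmost occurrences of the repeated suffix overlap, or interact with the period break, produces a factor of exponent $>\alpha$. Such a factor therefore has length $<\delta n$.

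Your substitute (``Fine and Wilf makes the second period incompatible with the first'') gives no contradiction. Nothing prevents a factor of $u$ from having a second, unrelated period. The contradiction comes only from that factor's exponent exceeding the critical exponent.

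With these bounds, one minimizes over $\xi$ for each $\alpha$ and then maximizes over $\alpha$, across the ranges up to $1.4$, $(1.4,2]$, $[2,2.5]$ and above $2.5$. Unbounded critical exponent is excluded because such words are not closed-rich.

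Finally, you should not expect your own optimization to land exactly on $\frac{967}{5827}$. That number is $f(0.4,z_0(0.4))$ for the particular piecewise increment bounds $r+2i$ and $\frac{(1.5+\epsilon)l+r+i}{2.5+\epsilon}$ in Proposition \ref{thdf1}. A different argument would produce a different constant, though any value $\le\frac{967}{5827}$ would suffice.
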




The main strategy of the proof is as follows. We will distinguish between several cases depending on the asymptotic critical exponent of the infinite word. We will say that an infinite word $v$ \textit{has long $\alpha$-powers} if for any $m \in \mathbb{N}$, there exists a factor $u$ of $v$ of length $n\geq m$ such that the exponent of $u$ is at least $\alpha$. Now, if $\alpha$ is the asymptotic critical exponent of an infinite word $v$, then two cases are possible: either $v$ has long $\alpha$-powers and does not have long $\alpha^+$-powers, or $v$ has long factors with exponents $\geq (\alpha - \epsilon)$ for any $\epsilon >0$ and does not have long $\alpha$-powers. We split possible values of the asymptotic critical exponent of infinite closed-rich words into four intervals and obtain an upper bound for the closed-rich constant for each interval, and the upper bound in Theorem \ref{mthsd} is given by the maximal of the bounds obtained for each interval.
For each of the intervals we treat the two cases above in separate propositions. The interval between $1$ and $1.4$ is considered in Propositions \ref{thdf33} and \ref{thdf331}, the interval  between $1.4$ and $2$ is considered in Propositions \ref{thdf22} and \ref{thdf221}, the interval between $2$ and $2.5$ is considered in Propositions \ref{thdf1} and \ref{qa11}, and finally asymptotic critical exponents greater than $2.5$ are considered in Proposition \ref{cont1} (the intervals are closed or open on the right and left depending on the case). For each of the intervals the proofs in the two cases are symmetric, so we provide only proofs for the first case, with the exception of Proposition  \ref{qa11} which we provide to demonstrate using the proof technique from the symmetric Proposition \ref{thdf1}. Finally, the proof of Theorem \ref{mthsd} is given by combining the above propositions and the fact that infinite words with unbounded critical exponent are not closed-rich \cite{parshina2024finite}. 

\begin{proposition}\label{thdf33}
    Let $v$ be an infinite closed-rich word such that it has long $\alpha$-powers but it does not have long $\alpha^+$-powers, where  $1 < \alpha \leq 1.4$. Then the closed-rich constant is at most $\frac{1}{7}$. 
\end{proposition}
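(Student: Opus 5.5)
The plan is to bound $\mathtt{Cl}(w)$ from above for \emph{every} sufficiently long factor $w$ of $v$. We build $w$ letter by letter and apply Corollary \ref{cor0}: the gain in closed factors at each step is at most the length of the largest repeated suffix. The absence of long $\alpha^+$-powers forces that suffix to be at most about $\frac{\alpha-1}{\alpha}$ times the current length. Summing these gains gives roughly $\frac{\alpha-1}{2\alpha}n^2\le\frac{1}{7}n^2$, because $\frac{\alpha-1}{2\alpha}$ is increasing in $\alpha$ and equals $\frac17$ at $\alpha=1.4$. The hypothesis that $v$ has long $\alpha$-powers is not needed for this direction; only the absence of long $\alpha^+$-powers is used.

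\textbf{Step 1 (telescoping).} Let $w=w_1\cdots w_n$ be a factor of $v$ and write $P_j=w_1\cdots w_j$. Then $\mathtt{Cl}(w)$ is $\mathtt{Cl}(P_1)=2$ plus the sum over $j$ of the increments $\mathtt{Cl}(P_j)-\mathtt{Cl}(P_{j-1})$. When a letter is new, Lemma \ref{lem02} gives increment $1$; this happens at most $|\Sigma|$ times. Otherwise Corollary \ref{cor0} bounds the increment by $|t_j|$, where $t_j$ is the largest repeated suffix of $P_j$.

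\textbf{Step 2 (bounding $|t_j|$).} Since $t_j$ occurs twice in $P_j$, there is an earlier occurrence, ending $d\ge1$ positions before the final one. Hence $P_j$ has a suffix of length $|t_j|+d\le j$ with period $d$, and its exponent is $\ge(|t_j|+d)/d$. Split according to $d$.
\begin{itemize}
\item Fix $\varepsilon>0$. By hypothesis there is $M$ such that every factor of $v$ of length $\ge M$ has exponent $<\alpha+\varepsilon$.
\item If $|t_j|+d\ge M$, then $|t_j|+d<(\alpha+\varepsilon)d$. Combined with $|t_j|+d\le j$, this gives $|t_j|\le\frac{\alpha-1+\varepsilon}{\alpha+\varepsilon}\,j$.
\item If $|t_j|+d<M$, then $|t_j|<M$ trivially.
\end{itemize}
So in all cases $|t_j|\le\frac{\alpha-1+\varepsilon}{\alpha+\varepsilon}\,j+M$.

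\textbf{Step 3 (summing).} Summing over $j\le n$ gives
$$\mathtt{Cl}(w)\le\frac{\alpha-1+\varepsilon}{2(\alpha+\varepsilon)}\,n^2+O_{\varepsilon}(n).$$
Since $\alpha\le 1.4$, we have $\frac{\alpha-1}{2\alpha}\le\frac{1}{7}$. Taking $w$ arbitrarily long (factors of every length exist in $v$) and letting $\varepsilon\to0$, the infimum defining $C_v$ is at most $\frac17$.

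The main obstacle is making Step 2 airtight, for two reasons. First, the repeated-suffix occurrences may overlap. This is harmless, since the overlap case is exactly the one producing the periodic suffix of length $|t_j|+d$, and the non-overlapping case only gives a smaller exponent. Second, short periods $d$ are not controlled by the "no long $\alpha^+$-powers" hypothesis. For these we cap $|t_j|$ by the fixed length $M$, which contributes only $O(n)$; if needed, one can alternatively invoke the fact from \cite{parshina2024finite} that closed-rich words have bounded critical exponent. One should also check that Corollary \ref{cor0} applies at every step after all letters of $w$ have appeared, so that only $O(1)$ steps fall under Lemma \ref{lem02}.
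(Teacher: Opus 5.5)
Your proposal is correct and is essentially the paper's proof: you build the factor letter by letter, bound each increment by the largest repeated suffix $t_j$ via Corollary~\ref{cor0}, use the absence of long $\alpha^+$-powers to get $|t_j|\le\frac{\alpha-1}{\alpha}j$ up to lower-order terms, and sum to obtain $\frac{\alpha-1}{2\alpha}\le\frac17$. Two differences are in your favour. First, noting that the suffix of length $|t_j|+d$ has border $t_j$, hence period $d$, handles the paper's overlapping and non-overlapping cases at once. Second, carrying $\varepsilon$ through is slightly more careful than the paper, which passes from exponent $>\alpha$ straight to the length bound. The one inaccuracy is your closing remark that short periods are not controlled by the hypothesis: a suffix of length $\ge M$ with small period is exactly what the hypothesis excludes. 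Your actual case split on $|t_j|+d$ is correct, so this does not affect the proof.
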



\begin{proof}
Since $v$  has long $\alpha$-powers and does not have long $\alpha^+$-powers, then for each $\epsilon >0$, there exists a positive integer $M$ (depending on $\epsilon$) such that the length of any factor of $v$ with exponent $\alpha + \epsilon$ is at most $M$.
    For a fixed $\delta >0$, we set $N=\lceil \frac{M}{\delta}\rceil$. Let $w$ be a 
    factor of $v$ 
    of length $n\geq N$; by the choice of $n$ we have that for each $\epsilon>0$ each $(\alpha+\epsilon)$-power factor of $w$ has length at most $n \delta$.
      For an integer $i$, we let $x^{(i)}$ denote the prefix of $w$ of length $i$ and $t^{(i)}$ denote the largest repeated suffix of $x^{(i)}$.
    If $t^{(i)} = \lambda$, then $\mathtt{Cl}(x^{(i)})- \mathtt{Cl}(x^{(i-1)}) = 1 \leq n \delta$. 
    Now, consider $t^{(i)} \in \Sigma^+$.
    If two rightmost occurrences of
    $t^{(i)} $ in $x^{(i)} $ intersect, then $t^{(i)} =yp=pg$ for some $y, p, g \in \Sigma^+$. Then the exponent of $ypg$ is greater than equal to $2$. This implies that $|ypg| < n\delta$. Hence $|t^{(i)}| < n\delta$.
   Now suppose that two rightmost occurrences of $t^{(i)}$ in $x^{(i)} $ do not intersect. 
    Then, for some $y^{(i)} \in \Sigma^*$, we have that $t^{(i)} y^{(i)}  t^{(i)} $ is a suffix of $x^{(i)} $. 
    Now, $t^{(i)} y^{(i)} t^{(i)} = (t^{(i)} y^{(i)})^{\frac{|t^{(i)}y^{(i)}t^{(i)}|}{|t^{(i)} y^{(i)}|}}$.
    If $\frac{|t^{(i)} y^{(i)} t^{(i)} |}{|t^{(i)} y^{(i)} |} > \alpha$, then $|t^{(i)} y^{(i)} t^{(i)}| < n\delta $, i.e., $|t^{(i)}| < n \delta$.
    If $\frac{|t^{(i)} y^{(i)} t^{(i)}|}{|t^{(i)} y^{(i)}|} \leq \alpha$, then $|t^{(i)}| \leq \frac{\alpha -1}{\alpha}|t^{(i)} y^{(i)} t^{(i)}| \leq \frac{\alpha -1}{\alpha} i$ as $|t^{(i)} y^{(i)} t^{(i)}| \leq i$.   
  
    Combining all cases and using Lemma \ref{lem0}, 
    we have  $$\mathtt{Cl}(x^{(i)})- \mathtt{Cl}(x^{(i-1)}) \leq |t^{(i)}| \leq n\delta + \frac{\alpha -1}{\alpha} i.$$
    Thus, $$\mathtt{Cl}(w) \leq 1+  \sum_{i=1}^{n} \left(n\delta +  \frac{\alpha -1}{\alpha} i\right), $$ (here, $1$ comes from the empty word, which is closed by definition), i.e., $$\mathtt{Cl}(w) \leq 1+   n^2\delta + \frac{(\alpha-1) (n^2+n)}{2\alpha}.$$ 
    
 Then, $$ \frac{ \mathtt{Cl}(w) }{n^2} \leq \frac{1}{n^2} + \delta + \frac{\alpha -1}{2\alpha} \left(1+\frac{1}{n}\right).$$
 
    Thus, $$  \inf \left\{ \frac{ \mathtt{Cl}(w) }{n^2}~:~ w \in \Sigma^n \cap Fac(v) ~\&~ n \geq N \right\} \leq \delta + \frac{\alpha -1}{2\alpha}.$$

    This implies by Definition \ref{eq:C_u} that for the closed-rich constant $C_v$ of $v$, we have $C_v\leq \delta + \frac{\alpha -1}{2\alpha}$. 

    The function $f(\alpha)= \frac{\alpha -1}{2\alpha}$ on the interval $[1,1.4]$ takes its maximum when $\alpha=1.4$.
    Since we can choose $\delta$ to be as small as we want, the closed-rich constant of $v$ is at most $f(1.4)=\frac{1}{7}$.    
   \end{proof}


Similarly to the proof of Proposition \ref{thdf33}, we can prove the following its symmetric version:

\begin{proposition}\label{thdf331}
    Let $\alpha$ be the asymptotic critical exponent of an infinite closed-rich word  $v$ such that for any $\epsilon>0$,  it has long factors with exponent $\geq \alpha-\epsilon $  but it does not have long $\alpha$-powers, where  $1< \alpha \leq 1.4$. Then, the closed-rich constant is at most $\frac{1}{7}$.
\end{proposition}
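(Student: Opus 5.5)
We follow the proof of Proposition \ref{thdf33} almost verbatim, with one change: the threshold exponent used to cut off long powers is $\alpha$ itself instead of $\alpha+\epsilon$. By hypothesis $v$ has no long $\alpha$-powers. So there is $M$ such that every factor of $v$ of exponent at least $\alpha$ has length at most $M$. Fix $\delta>0$, put $N=\lceil M/\delta\rceil$, and let $w$ be a factor of $v$ of length $n\ge N$. Then every factor of $w$ with exponent $\ge\alpha$ has length at most $n\delta$. As before, let $x^{(i)}$ denote the prefix of $w$ of length $i$ and $t^{(i)}$ the largest repeated suffix of $x^{(i)}$.

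We bound $|t^{(i)}|$ in each of the three cases.
\begin{itemize}
\item If $t^{(i)}=\lambda$, the increment of $\mathtt{Cl}$ is at most $1\le n\delta$.
\item If the two rightmost occurrences of $t^{(i)}$ overlap, then $t^{(i)}=yp=pg$ and $ypg$ has exponent $\ge 2>\alpha$ (recall $\alpha\le 1.4$). Hence $|t^{(i)}|<n\delta$.
\item Otherwise $t^{(i)}y^{(i)}t^{(i)}$ is a suffix of $x^{(i)}$ and has period $|t^{(i)}y^{(i)}|$. If its exponent $\frac{|t^{(i)}y^{(i)}t^{(i)}|}{|t^{(i)}y^{(i)}|}$ is $\ge\alpha$, then its length is at most $n\delta$, so $|t^{(i)}|\le n\delta$. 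If the exponent is $<\alpha$, then $|t^{(i)}|<\frac{\alpha-1}{\alpha}|t^{(i)}y^{(i)}t^{(i)}|\le\frac{\alpha-1}{\alpha}i$.
\end{itemize}
The exponent here is computed using a period, not necessarily the minimal one. The minimal period can only make the exponent larger, so the implication ``exponent $\ge\alpha$ forces length $\le n\delta$'' remains valid.

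By Lemmas \ref{lem0} and \ref{lem02} (or Corollary \ref{cor0}), in every case
$$\mathtt{Cl}(x^{(i)})-\mathtt{Cl}(x^{(i-1)})\le n\delta+\tfrac{\alpha-1}{\alpha}i.$$
Summing over $i=1,\dots,n$ and adding $1$ for the empty word gives
$$\mathtt{Cl}(w)\le 1+n^2\delta+\tfrac{(\alpha-1)(n^2+n)}{2\alpha}.$$
Dividing by $n^2$ and taking the infimum over $n\ge N$ yields $C_v\le\delta+\frac{\alpha-1}{2\alpha}$. Since $\delta$ is arbitrary and $\frac{\alpha-1}{2\alpha}$ is increasing on $(1,1.4]$, we get $C_v\le\frac{1}{7}$.

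There is no real obstacle here. The only point to check is the replacement of the hypothesis ``no long $(\alpha+\epsilon)$-powers for every $\epsilon$'' by ``no long $\alpha$-powers''. This only makes the case split cleaner: the inequality $\ge\alpha$ versus $<\alpha$ replaces $>\alpha$ versus $\le\alpha$. The other hypothesis, the existence of long factors of exponent $\ge\alpha-\epsilon$, is not needed for the upper bound.
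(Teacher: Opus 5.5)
Your proof is correct and is essentially what the paper intends: the paper gives no separate argument and just says this proposition is proved like Proposition \ref{thdf33}, and you carry out that argument with the cutoff exponent $\alpha$ in place of $\alpha+\epsilon$. Here the cutoff is genuinely uniform, since one constant $M$ bounds every factor of exponent $\ge\alpha$, and you are also right that the existence of long $(\alpha-\epsilon)$-powers is never used.
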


\begin{proposition}\label{thdf22}
    Let $v$ be an infinite closed-rich word of asymptotic critical exponent $\alpha$, where  $1.4 < \alpha < 2$. Suppose that it has long $\alpha$-powers but it does not have long $\alpha^+$-powers. Then the closed-rich constant of $v$ is at most $ \frac{13}{80}$.
\end{proposition}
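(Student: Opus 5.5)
The plan is to follow the scheme of Proposition~\ref{thdf33}: build $w$ one letter at a time and bound each increment $\mathtt{Cl}(x^{(i)})-\mathtt{Cl}(x^{(i-1)})$ using Lemma~\ref{lem0} and Corollary~\ref{cor0}. The difference from Proposition~\ref{thdf33} is that $w$ will no longer be an arbitrary long factor. The uniform bound $|t^{(i)}|\le \frac{\alpha-1}{\alpha}i$ only gives $\frac{\alpha-1}{2\alpha}$, which tends to $\frac14>\frac{13}{80}$ as $\alpha\to 2$, so it is too weak here. Instead I take $w$ to be (essentially) a long $\alpha$-power $u^{\alpha}$ of $v$, with $|u|=p$ and $n=|w|=\alpha p$. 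Such factors exist for arbitrarily large $n$ by hypothesis. As before, fix $\delta>0$ and $M$ so that every $(\alpha+\epsilon)$-power factor has length at most $n\delta$; these short repetitions contribute only $O(\delta n^2)$ in total.

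\textbf{Steps.} First, since $\alpha<2$, long squares do not occur. Hence, exactly as in Proposition~\ref{thdf33}, whenever $|t^{(i)}|>n\delta$ the two rightmost occurrences of $t^{(i)}$ are disjoint, so $t^{(i)}y^{(i)}t^{(i)}$ is a suffix of $x^{(i)}$ and $|t^{(i)}|\le \frac{\alpha-1}{\alpha}|t^{(i)}y^{(i)}t^{(i)}|$.

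Second, I split the index range into $i\le p$ and $p<i\le n$.
\begin{itemize}
\item For $i\le p$, I use the bound $|t^{(i)}|\le\frac{\alpha-1}{\alpha}i$. Here I also try to sharpen it: the pair of occurrences must fit inside a prefix of the $\alpha$-power, which gives a bound like $|t^{(i)}|\le \min\{\frac{\alpha-1}{\alpha}i,\ i/2\}$.
\item For $i>p$, the suffix of $x^{(i)}$ of length $i-p$ reoccurs at distance $p$, because $w$ has period $p$. So $t^{(i)}$ contains a repeated suffix of length roughly $i-p$, up to an $n\delta$ error. I then apply the exact form of Lemma~\ref{lem0}, with increment $|t^{(i)}|-|z^{(i)}|$, together with the estimate $|z^{(i)}|\ge \min\{|t^{(i)}|,\,i-p\}-n\delta$, or a similar periodicity estimate. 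In this range $|t^{(i)}|$ itself is bounded by $\frac{\alpha-1}{\alpha}$ times the length of an $\alpha$-power ending at position $i$, which is at most $n$. Thus each increment is at most about $\frac{\alpha-1}{\alpha}n-(i-p)$, truncated at $0$.
\end{itemize}

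Third, I sum the resulting piecewise-linear bounds over $i$, divide by $n^2=\alpha^2p^2$, and let $\delta\to0$. This gives an explicit rational function $g(\alpha)$ on $(1.4,2)$, and I maximize it. If the prefix-only construction is not tight enough, I extend $w$ symmetrically on the left by a suitable fraction of $p$ and use Lemma~\ref{lem01} for the left extensions, optimizing the split parameter. The target is $\max g=\frac{13}{80}$.

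\textbf{Main obstacle.} The hardest step is the middle range $i>p$: I need to show that the long repeated suffix $t^{(i)}$ has a long repeated suffix $z^{(i)}$, forced by the period $p$, so that the net increment $|t^{(i)}|-|z^{(i)}|$ is genuinely small. This requires care with how the occurrences of $t^{(i)}$ sit relative to the period, using Lemma~\ref{4lk} to exclude a long $t^{(i)}$ that is not aligned with $p$ (otherwise one obtains a long power of exponent above $\alpha$). I would first treat the aligned case directly and reduce misaligned occurrences to a short ($\le n\delta$) repetition via Lemma~\ref{4lk}. After that, getting the constant exactly $\frac{13}{80}$ is a routine calculus optimization.
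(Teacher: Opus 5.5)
There is a genuine gap, and it sits exactly at the step you call the main obstacle.

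For $p<i\le n$, periodicity shows that the suffix of $x^{(i)}$ of length $i-p$ also occurs $p$ positions earlier, so it is repeated in $x^{(i)}$. That gives $|t^{(i)}|\ge i-p$. It says nothing about $z^{(i)}$, which has to be repeated \emph{inside} $t^{(i)}$. The period $p$ is invisible inside $t^{(i)}$: a recurrence at distance $p$ within $t^{(i)}$ would need $|t^{(i)}|>p$, whereas roughly $|t^{(i)}|\le\frac{\alpha-1}{\alpha}n+n\delta=(\alpha-1)p+n\delta<p$.

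In fact the opposite of your estimate holds. Two occurrences of $z^{(i)}$ in $t^{(i)}$ can overlap only if $|z^{(i)}|\le n\delta$, since there are no long squares when $\alpha<2$. Otherwise they give a suffix $zy'z$ of $t^{(i)}$ that is either short or of exponent at most about $\alpha$, so $|z^{(i)}|\lesssim\frac{\alpha-1}{\alpha}|t^{(i)}|+n\delta$. By the exact form of Lemma~\ref{lem0}, the increment at $i$ is therefore at least about $\frac{|t^{(i)}|}{\alpha}-n\delta\ge\frac{i-p}{\alpha}-n\delta$.

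So the increments grow on $(p,n]$ instead of dying out. At $i=n$ your bound $\max\{0,\frac{\alpha-1}{\alpha}n-(i-p)\}$ equals $0$, while the true increment is at least about $\frac{\alpha-1}{\alpha^2}n$. Your $g(\alpha)$ is thus not an upper bound, and its value $\frac{13}{80}$ is asserted rather than derived. The straightforward correct estimates do not rescue the approach either. Bounding $\mathtt{Cl}$ of the prefix of length $p$ by $p^2/6$ and each later increment by $|t^{(i)}|\le\frac{\alpha-1}{\alpha}i$ gives only about $\frac{11}{48}n^2$ as $\alpha\to2$, which is worse than the trivial $\frac{n^2}{6}$.

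The missing idea is not to try to beat $\frac16$ inside the $\alpha$-power at all. The paper bounds $\mathtt{Cl}(w)\le\frac{n^2}{6}+O(n)$ by Theorem~\ref{pth} and gains only from an extension $x$ of length $\xi n$ to the right of $w$. Two facts control that extension:
\begin{itemize}
\item the period of $w$ cannot be continued by the next letter, since otherwise $v$ would contain a long $\alpha^+$-power;
\item $v$ has no long squares.
\end{itemize}
A case analysis on where the two rightmost occurrences of $t^{(i)}$ sit then gives $|t^{(i)}|\le\max\{\frac{\alpha-1}{\alpha^2}n,\,n\delta\}+i$. So each new letter adds at most about $\frac n4+i$ closed factors. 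For a short extension this is fewer than the roughly $\frac n3$ new closed factors per letter needed to keep the ratio at $\frac16$.

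Optimizing $\frac{1}{(1+\xi)^2}\bigl(\frac16+\frac{\xi(\alpha-1)}{\alpha^2}+\frac{\xi^2}{2}\bigr)$ over $\xi$, and then over $\alpha\in(1.4,2)$, gives $\frac{13}{80}$, reached as $\alpha\to2$ with $\xi=\frac19$. Your fallback of extending $w$ on the left points in this direction. However, you give no bound on the increments contributed by the extension letters, and that bound is the actual content of the proof.
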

\begin{proof}
    Since $v$ has long $\alpha$-powers but it does not have long $\alpha^+$-powers, then for each $\epsilon >0$, there exists a positive integer $M$ 
    such that the length of any factor of $v$ with exponent $\alpha + \epsilon$ is at most $M$.
    For a fixed $\delta >0$, we set $N=\lceil \frac{M}{\delta}\rceil$. Let $w$ be a $\alpha$-power factor of $v$ of length $n\geq N$ (such $w$ exists in $v$ by the condition of the proposition). Note that for each $(\alpha+\epsilon)$-power factor $z$ of $w$, 
    where $\epsilon>0$, its length is at most $M\leq \delta N\leq \delta n $; we will use this fact several times in the proof.
    Then $w$ is of the form $w=upu$, where $u, p \in \Sigma^+$, 
    $|u|=l$, $|p|=r$ and $\frac{2l+r}{l+r}=\alpha$. 
    Since $|w|=n=2l+r$, we have $l=\frac{n(\alpha-1)}{\alpha}$ and $r=\frac{n(2-\alpha)}{\alpha}$.
    Consider some occurrence of $w$ in $v$ and its extension to the right by a factor of length $\xi n $ for some $\xi>0$  (we will choose the value of $\xi$ later). 
    We let $x$ denote this extension, and $w'$ the extended factor, i.e. $w'=upu x \in Fac(v)$, 
    where $|x|=\xi n$. 
    We find an upper bound for the number of distinct closed factors in $w'$, which will give us an upper bound for the closed-rich constant of $v$.
    Let $x^{(i)}$ 
    be a prefix of $x$ of length $i$ and $t^{(i)}$  be the largest repeated suffix of $upu x^{(i)}$.
     If $t^{(i)} = \lambda$, then $\mathtt{Cl}(upux^{(i)}) - \mathtt{Cl}(upux^{(i)-}) = 1$.
     Now, consider $t^{(i)} \in \Sigma^+$. 
    Let us denote the rightmost $t^{(i)}$ in $upu x^{(i)}$ by $t^{(i1)}$ and the rightmost $t^{(i)}$ in $upu x^{(i)-}$ by $t^{(i2)}$. 
    We consider two cases depending on whether $t^{(i1)}$ and $t^{(i2)}$ intersect or not, and in each case we get an upper bound on the length of $t^{(i)}$. We will later use these bounds to obtain an upper bound for the number of distinct closed-rich factors of $w'$, which immediately gives an upper bound for the closed-rich constant of $v$. 
     
    If $t^{(i1)}$ and $t^{(i2)}$ intersect (see Fig. \ref{wes1}), 
    then $t^{(i2)}=s^{(1)} s^{(2)}$ and $t^{(i1)}=s^{(2)} s^{(3)}$ for some $s^{(1)}, s^{(2)}, s^{(3)} \in \Sigma^+$. Then, the exponent of $s^{(1)} s^{(2)} s^{(3)}$ is greater than or equal to $2$. Since all factors of $v$ of exponent greater or equal to $2$ are of length at most $n\delta$, we have $|s^{(1)} s^{(2)} s^{(3)}| < n\delta$. Since $|t^{(i)}|<|s^{(1)} s^{(2)} s^{(3)}|$, we also have $|t^{(i)}| < n\delta$.


\begin{figure}[h]
    \centering
        \begin{tikzpicture}
        \draw (1,0) arc[start angle=0, end angle=180, radius=1] node[midway, above] {$u, l$};
        \draw (4,0) arc[start angle=0, end angle=180, radius=1] node[midway, above] {$u, l$};
        \draw (5.4,0) arc[start angle=0, end angle=180, radius=.7] node[midway, above] {$x$};
        \draw (4.5,0) arc[start angle=0, end angle=-180, radius=.6] node[midway, below] {$t^{(i1)}$};
         \draw (3.7,0) arc[start angle=0, end angle=-180, radius=.6] node[midway, below] {$t^{(i2)}$};
        \draw[<->] (2.5,0.15) -- (3.2,0.15) node[midway, above] {$s^{(1)}$};
        \draw[<->] (3.3,0.15) -- (3.65,0.15) node[midway, above] {$s^{(2)}$};
        \draw[<->] (3.7,-0.1) -- (4.5,-0.1) node[midway, below] {$s^{(3)}$};
        \draw[<->] (4,0.1) -- (4.5,0.1) node[right, above] {$x^{(i)}$};
        \draw[<->] (1,0.15) -- (2,0.15) node[midway, above] {$p, r$};
        \draw[-] (-1,0) -- (6,0);
        \end{tikzpicture}
        \caption{Illustration to the proof of Proposition \ref{thdf22} in the case when $t^{(i1)}$ and $t^{(i2)}$ intersect.}
        \label{wes1}
\end{figure}

\begin{figure}[h]   
        \centering
        \begin{tikzpicture}
        \draw (1,0) arc[start angle=0, end angle=180, radius=1] node[midway, above] {$u, l$};
        \draw (4,0) arc[start angle=0, end angle=180, radius=1] node[midway, above] {$u, l$};
        \draw (5.4,0) arc[start angle=0, end angle=180, radius=.7] node[midway, above] {$x$};
        \draw (4.5,0) arc[start angle=0, end angle=-180, radius=.6] node[midway, below] {$t^{(i1)}$};
         \draw (3,0) arc[start angle=0, end angle=-180, radius=.6] node[midway, below] {$t^{(i2)}$};
         \draw[<->] (1.8,0.15) -- (2.48,0.15) node[midway, above] {$g$};
         \draw[-] (2.48,0.1) -- (2.48,-0.1) node[midway, above] {};
        \draw[<->] (3.3,0.15) -- (3.98,0.15) node[midway, above] {$g$};
        \draw (3.3,0) arc[start angle=0, end angle=180, radius=.4] node[midway, above] {$y$};
        \draw[<->] (4,0.1) -- (4.5,0.1) node[right, above] {$x^{(i)}$};
        \draw[<->] (1,.3) -- (2,0.3) node[midway, above] {$p, r$};
        \draw[-] (-1,0) -- (6,0);
        \end{tikzpicture}
        \caption{Illustration to the proof of Proposition \ref{thdf22}, Case 1.1.}
        \label{wes2}
\end{figure}

    Now suppose that $t^{(i1)}$ and $t^{(i2)}$  do not intersect. Then as $t^{(i1)}, t^{(i2)} \in Fac(upux^{(i)})$ and $|upux^{(i)}|=2l+r+i$, we have  $|t^{(i)}| \leq l + \frac{i+r}{2} $.   
    

    Now, we consider the following cases, depending on the length of $t^{(i)}$: 
   
   \textbf{Case 1}: $|t^{(i)}| \leq l+i$. 
        
        In this case we have either  $|t^{(i)}| \leq i$ or $t^{(i1)}=gx^{(i)}$ for some $g \in \Sigma^+$.
         In the first case we will use this bound for $|t^{(i)}|$, so we now consider the case $g \in \Sigma^+$. We consider two subcases depending on the occurrence of $t^{(i2)}$:
         
         
            \textbf{Case 1.1}: $t^{(i2)} \in Fac(pu)$ (see Fig. \ref{wes2}).
            
            In this case, since $g \in Pref(t^{(i2)})$, we have $gyg \in Suff(pu)$ for some $y \in \Sigma^+$. Now, $gyg=(gy)^{\frac{|g|+|y|+|g|}{|g|+|y|}}$.
        If $\frac{|g|+|y|+|g|}{|g|+|y|}>\alpha$, then $|gyg|<n \delta$, so we also have $|g|< n \delta$. Then $|t^{(i)}| < n\delta +i$.
        
        If $\frac{|g|+|y|+|g|}{|g|+|y|} \leq \alpha$, then $\frac{(2-\alpha)|g|}{\alpha-1} \leq |y|$. Since $gyg \in Fac(pu)$, $|g|+|y|+|g| \leq l+r$. Then, as $\frac{(2-\alpha)|g|}{\alpha-1} \leq |y|$, we have 
        $|g|+ \frac{(2-\alpha)|g|}{\alpha-1} +|g| \leq |g|+|y|+|g| \leq l+r$. Since $l+r=\frac{n}{\alpha}$, we get $|g| \leq \frac{n (\alpha-1)}{\alpha^2}$. Thus, $ |t^{(i)}| = |g|+i  \leq \frac{n (\alpha-1)}{\alpha^2} + i$.

\textbf{Case 1.2}: $t^{(i2)} \in Fac(upu)$ and $t^{(i2)} \notin Fac(pu)$ (see Fig. \ref{wes3}).
            
            In this case, we have either $t^{(i2)}$ is completely contained in the first occurrence of $u$ in $w'$ (which is not possible as there would be another occurrence of $t^{(i)}$ inside the rightmost occurrence of $u$, and this occurrence of $t^{(i)}$ would be the second right occurrence of $t^{(i)}$ in $w'$ instead of $t^{(i2)}$), or $t^{(i2)} = \beta^{(1)} \beta^{(2)}$ for some non-empty words $\beta^{(1)}$ and $\beta^{(2)}$ such that $\beta^{(1)} \in Suff(u)$ and $\beta^{(2)} \in Pref(pu)$.
            Since $w$ is an $\alpha$-power and not an $\alpha^+$-power, we have that $w_{l+1}\neq w_{2l+r+1}$. 
            So, we have $|g| \neq |\beta^{(1)}|$. 

\textbf{Case 1.2.1}: If $|\beta^{(1)}| < 2|g|$, then the occurrence of $g$ as a prefix of $t^{(i2)}$ and the occurrence of $g$ as a suffix of the first occurrence of $u$ in $w$ intersect. Thus we have a factor of exponent greater than $2$, so $|g|<n\delta$. This implies $|t^{(i)}| < n\delta + i$.

                \textbf{Case 1.2.2}:  If $|\beta^{(1)}| \geq 2|g|$, then as $g\in Pref(t^{(i2)})$ and $g \in Suff(u)$, $gqg$ is a suffix of $u$ for some $q \in \Sigma^*$. This implies $gqg \in Pref(t^{(i2)})$, i.e., $qg \in Pref(x^{(i)})$. Then, $gqgqg \in Fac(ux^{(i)})$. Since exponent of $gqgqg$ is greater than or equal to $2$, $|gqgqg|<n \delta$, i.e., $|g|< n \delta$. Thus, $|t^{(i)}| < n\delta + i$.
        \begin{figure}[h]
        \centering
        \begin{tikzpicture}
        \draw (1,0) arc[start angle=0, end angle=180, radius=1] node[midway, above] {$u, l$};
        \draw (4,0) arc[start angle=0, end angle=180, radius=1] node[midway, above] {$u, l$};
        \draw (5.4,0) arc[start angle=0, end angle=180, radius=.7] node[midway, above] {$x$};
        \draw (5.1,0) arc[start angle=0, end angle=-180, radius=1] node[midway, below] {$t^{(i1)}$};
         \draw (1.9,0) arc[start angle=0, end angle=-180, radius=1] node[midway, below] {$t^{(i2)}$};
         \draw[] (.4,0.01) -- (.5,0.01) node[midway, above] {$\beta^{(1)}$};
        \draw[] (3.5,0.01) -- (3.49,0.01) node[midway, above] {$g$};
        \draw[] (4.5,0.01) -- (4.49,0.01) node[midway, above] {$x^{(i)}$};
        \draw[<->] (1,.3) -- (2,0.3) node[midway, above] {$p, r$};
        \draw[-] (-1,0) -- (6,0);
        \end{tikzpicture}
        \caption{Illustration to the proof of Proposition \ref{thdf22}, Case 1.2.} 
        \label{wes3}
        \end{figure}



\textbf{Case 2}: $l + i < |t^{(i)}| \leq l+\frac{i+r}{2}$.
        
        In this case, $t^{(i1)} = y'ux^{(i)}$ for some $y' \in \Sigma^+$.
 
 \textbf{Case 2.1}: $t^{(i2)} \in Fac(p)$ (see Fig. \ref{wes41}).
             
             In this subcase, as $u \in Fac(t^{(i2)})$, $r>l$. Then $1.4 < \alpha <1.5$. This implies $l<r<\frac{3l}{2}$. Now, $u \in Fac(t^{(i2)})$ and $u \in Suff(w)$ implies $uu'u \in Suff(w)$ for some $u' \in \Sigma^+$. Since $r<\frac{3l}{2}$ and $|u|=l$, we have  $|u'|<l$. Then the  exponent of $uu'u$ is $>1.5$, i.e., the exponent of $uu'u$ is greater than $\alpha$. This implies $|uu'u|<n \delta$. Since $|u'|> |y'|$, we have $|t^{(i)}|=|y'|+|u|+i \leq |u'|+|u|+i \leq n \delta + i$.


\begin{figure}[h]
    \centering
        \begin{tikzpicture}
        \draw (-0.8,0) arc[start angle=0, end angle=180, radius=1] node[midway, above] {$u, l$};
        \draw (4,0) arc[start angle=0, end angle=180, radius=1] node[midway, above] {$u, l$};
        \draw (5.4,0) arc[start angle=0, end angle=180, radius=.7] node[midway, above] {$x$};
        \draw (4.2,0) arc[start angle=0, end angle=-180, radius=1.2] node[midway, below] {$t^{(i1)}$};
         \draw (1.6,0) arc[start angle=0, end angle=-180, radius=1.1] node[midway, below] {$t^{(i2)}$};
         \draw[-] (1.3,0.1) -- (1.3,-0.1);
         \draw[<->] (1.35,0.2) -- (1.6,0.2) node[right, above] {$x^{(i)}$};
         \draw[<->] (-0.3,0.15) -- (1.3,0.15) node[midway, above] {$u$};
         \draw[-] (-0.3,0.1) -- (-0.3,-0.1);
          \draw[-] (-0.4,0.05) -- (-0.4,0.05) node[midway, above] {$y'$};
        \draw[-] (1.9,0.01) -- (1.9,0.01) node[midway, above] {$y'$};
        \draw[] (4.2,0.01) -- (4.3,0.01) node[right, above] {$x^{(i)}$};
        \draw (2,0) arc[start angle=0, end angle=180, radius=1.4] node[midway, above] {$p, r$};
         \draw (2,0) arc[start angle=0, end angle=-180, radius=.35] node[midway, below] {$u'$};
        \draw[-] (-2.8,0) -- (5.5,0);
        \end{tikzpicture}
        \caption{{Illustration to the proof of Proposition \ref{thdf22}, Case 2.1}.}
        \label{wes41}
    \end{figure}

    \begin{figure}[h]
        \centering
        \begin{tikzpicture}
        \draw (.5,0) arc[start angle=0, end angle=180, radius=1] node[midway, above] {$u, l$};
        \draw (4,0) arc[start angle=0, end angle=180, radius=1] node[midway, above] {$u, l$};
        \draw (5.4,0) arc[start angle=0, end angle=180, radius=.7] node[midway, above] {$x$};
        \draw (4.3,0) arc[start angle=0, end angle=-180, radius=1.3] node[midway, below] {$t^{(i1)}$};
         \draw (1.4,0) arc[start angle=0, end angle=-180, radius=1.3] node[midway, below] {$t^{(i2)}$};
        \draw[-] (1.9,0.01) -- (1.9,0.01) node[midway, above] {$y'$};
        \draw[<->] (1.8,-0.15) -- (2.8,-0.15) node[midway, below] {$g^{(1)}$};
        \draw[<->] (-1.1,0.15) -- (0.46,0.15) node[midway, above] {$g^{(1)}$};
        \draw[<->] (0.6,0.15) -- (1.3,0.15) node[midway, above] {$g^{(2)}$};
        \draw[] (4.2,0.01) -- (4.5,0.01) node[midway, above] {$x^{(i)}$};
         \draw[<->] (2.8,0.15) -- (4,0.15) node[midway, above] {$g^{(1)}$};
        \draw (2,0) arc[start angle=0, end angle=180, radius=.75] node[midway, above] {$p, r$};
        \draw[-] (-1.5,0) -- (5.5,0);
        \end{tikzpicture}
        \caption{{Illustration to the proof of Proposition \ref{thdf22}, Case 2.2. 
        }}
        \label{wes4}
\end{figure}

\textbf{Case 2.2}: $t^{(i2)} \in Fac(up)$ and $t^{(i2)} \notin Fac(p)$ (see Fig. \ref{wes4})
             
             In this subcase, $t^{(i2)} =g^{(1)} g^{(2)}$  for some nonempty $g^{(1)}, g^{(2)}$, such that $g^{(1)} \in Suff(u)$ and $g^{(2)}  \in  Pref(p)$. Then $g^{(1)} \in  Pref(t^{(i1)})$. 
             
             Now, if the occurrence of $g^{(1)}$ as a prefix of $t^{(i1)}$ and the occurrence of $g^{(1)}$ as a suffix of the second occurrence of $u$ intersect, then the exponent of the factor given by this intersection of the two occurrences of $g^{(1)}$ (in other words, the factor $t^{(i1)} (x^{(i)})^{-1}$) is greater than equal to $2$, 
              so its length is at most $ n\delta$, and hence $|t^{(i)}|< n\delta +i$.
             
             If $g^{(1)} \in  Pref(t^{(i1)})$ and $g^{(1)} \in  Suff(u)$ do not intersect, then $t^{(i1)} = g^{(1)} y'' g^{(1)} x^{(i)}$ for some $y'' \in \Sigma^*$. Now, we have the following cases:

\textbf{Case 2.2.1}: $|g^{(1)}|\geq |y'|$ (see Fig. \ref{wes4122}). 
                 
                 Then,  $y'' \in Fac(u)$. So, $y'' g^{(1)} \in Suff(u)$. 
                 Then as $g^{(1)} y'' g^{(1)} \in Pref(t^{(i2)})$,  $y'' g^{(1)} y'' g^{(1)} \in Fac(up)$. Since exponent of $y'' g^{(1)} y'' g^{(1)} $ is $2$, $|y'' g^{(1)} y'' g^{(1)} | < n \delta$. This implies $|t^{(i)}| < n \delta + i$.
                 
                  \textbf{Case 2.2.2}: $|g^{(1)}| < |y'|$ (see Fig. \ref{wes42123}). 
                  
                  Then, as $y' \in Pref(t^{(i2)})$, we have $u \in Fac(p)$.  Therefore, $r>l$, and so similarly to the Case 2.1, we have $1.4 < \alpha <1.5$. This implies $l<r<\frac{3l}{2}$.
                  Now, $u \in Fac(p)$ and $u \in Suff(w)$ implies $uu''u \in Suff(w)$ for some $u'' \in \Sigma^+$. Since $r<\frac{3l}{2}$ and $|u|=l$,  $|u''|<l$. Then the exponent of $uu''u$ is greater than $1.5$, i.e., greater than $\alpha$. This implies $|uu''u|<n \delta$. Since $|u''|> |y'|$, we have $|t^{(i)}|=|y'|+|u|+i \leq |u''|+|u|+i \leq n \delta + i$.
                  
\begin{figure}[h]
    \centering
        \begin{tikzpicture}
        \draw (.5,0) arc[start angle=0, end angle=180, radius=1] node[midway, above] {$u, l$};
        \draw (4,0) arc[start angle=0, end angle=180, radius=1] node[midway, above] {$u, l$};
        \draw (5.4,0) arc[start angle=0, end angle=180, radius=.7] node[midway, above] {$x$};
        \draw (4.3,0) arc[start angle=0, end angle=-180, radius=1.3] node[midway, below] {$t^{(i1)}$};
         \draw (1.4,0) arc[start angle=0, end angle=-180, radius=1.2] node[midway, below] {$t^{(i2)}$};
        \draw[-] (1.9,0.01) -- (1.9,0.01) node[midway, above] {$y'$};
        \draw[<->] (-1,0.15) -- (0.46,0.15) node[midway, above] {$g^{(1)}$};
        \draw[<->] (0.6,0.15) -- (1.3,0.15) node[midway, above] {$g^{(2)}$};
        \draw[] (4.2,0.01) -- (4.4,0.01) node[right, above] {$x^{(i)}$};
        \draw[<->] (1.8,-0.15) -- (2.7,-0.15) node[midway, below] {$g^{(1)}$};
         \draw[<->] (2.9,0.15) -- (4,0.15) node[midway, above] {$g^{(1)}$};
         \draw[-] (2.7,0.15) -- (2.7,-0.15) node[midway, above] {};
         \draw[-] (2.9,0.15) -- (2.9,-0.15) node[midway, above] {};
         \draw[] (2.9,0.15) -- (2.9,0.15) node[midway, above] {$y''$};
        \draw (2,0) arc[start angle=0, end angle=180, radius=.75] node[midway, above] {$p, r$};
        \draw[-] (-1.5,0) -- (5.5,0);
        \end{tikzpicture}
        \caption{{Illustration to the proof of Proposition \ref{thdf22}, Case 2.2.1 }}
        \label{wes4122}
    \end{figure}
    
    \begin{figure}[h]
        \centering
        \begin{tikzpicture}
        \draw (.6,0) arc[start angle=0, end angle=180, radius=1] node[midway, above] {$u, l$};
        \draw (6,0) arc[start angle=0, end angle=180, radius=1] node[midway, above] {$u, l$};
         \draw (7,0) arc[start angle=0, end angle=180, radius=.5] node[midway, above] {$x$};
        \draw (6.3,0) arc[start angle=0, end angle=-180, radius=1.5] node[midway, below] {$t^{(i1)}$};
        \draw (2.7,0) arc[start angle=0, end angle=-180, radius=1.3] node[midway, below] {$t^{(i2)}$};
        \draw[<->] (3.37,-0.15) -- (4,-0.15) node[midway, below] {$y'$};
        \draw[<->] (0.17,-0.15) -- (0.8,-0.15) node[midway, below] {$y'$};
        \draw[<->] (0.82,-0.15) -- (2.5,-0.15) node[midway, below] {$u$};
        \draw[-] (2.5,0.09) -- (2.5,-0.09) node[midway, above] {};
        \draw[-] (3.7,0.09) -- (3.7,-0.09) node[midway, above] {};
        \draw (4,0) arc[start angle=0, end angle=-180, radius=.74] node[midway, below] {$u''$};
        \draw[<->] (3.3,0.15) -- (3.7,0.15) node[midway, above] {$g^{(1)}$};
         \draw[<->] (0,0.15) -- (0.47,0.15) node[midway, above] {$g^{(1)}$};
        \draw (4,0) arc[start angle=0, end angle=180, radius=1.7] node[midway, above] {$p, r$};
        \draw[-] (-1.4,0) -- (7,0);
        \end{tikzpicture}
        \caption{{Illustration to the proof of Proposition \ref{thdf22}, Case 2.2.2}}
        \label{wes42123}
\end{figure}
              


Now we are ready to estimate the number of closed factors in $w'$. Combining the cases above, we have $$ \mathtt{Cl}(upux^{(i)})- \mathtt{Cl}(upux^{(i)-})  \leq   \max\left\{\frac{n (\alpha-1)}{\alpha^2} +i, \delta n+i\right\}.$$ So, we can take
 $\delta = \frac{\alpha-1}{ \alpha^2 }$ (or in fact any number smaller than that).
Then, by Lemma \ref{lem0}, we have
$ \mathtt{Cl}(upux^{(i)})- \mathtt{Cl}(upux^{(i)-})  \leq   \frac{n (\alpha-1)}{\alpha^2} +i.$ 
We set $|x|=\xi n $ for some $\xi>0$.
Thus  using Theorem \ref{pth}, we have
\begin{align*}
    \mathtt{Cl}(w') 
                \leq \frac{n^2}{6} +\frac{7n}{6}+1 + \sum_{i=1}^{ \xi n } \left(\frac{n (\alpha-1)}{\alpha^2} +i \right) = \frac{n^2}{6} +\frac{7n}{6}+1 + \frac{\xi n^2 (\alpha-1)}{\alpha^2} +  \frac{\xi^2 n^2}{2} + \frac{\xi n}{2}.            
\end{align*}
Now, since $|w'|=n+|x|= (1+\xi)n$, we have 
\begin{align*}
   & \inf \left\{ \frac{ \mathtt{Cl}(w') }{(1+\xi)^2 n^2} : w'=wx \in Fac(v), |w|=n, |x|=\xi n, exp(w)=\alpha, n \geq N \right\} \\
   & \leq \frac{1}{(1+\xi)^2} \left(\frac{1}{6} + \frac{\xi (\alpha-1)}{\alpha^2} +  \frac{\xi^2 }{2} \right). 
\end{align*}

We let denote the obtained function by $f(\xi,\alpha)$: $$f(\xi, \alpha)= \frac{1}{(1+\xi)^2} \left(\frac{1}{6} + \frac{\xi (\alpha-1)}{\alpha^2} +  \frac{\xi^2}{2}\right).$$ The previous inequality implies by Definition \ref{eq:C_u} that $f(\xi,\alpha)$ is an upper bound for the closed-rich constant $C_v$ of $v$: $C_v\leq f(\xi, \alpha).$ 

So, we can choose for each $\alpha$ the value of $\xi\in(0,+\infty)$ giving minimum of our bound $f(\xi,\alpha)$, and then to get an upper bound for all $\alpha \in (1.4,2)$, take maximum over $\alpha$ in this interval.
(Here we ignore the fact that $\xi n$ can be non-integer, since taking the previous or the next length does not affect the infimum. This follows, for example, from the fact that adding one  letter to a word adds only a sublinear number of closed factors). So,
    $$C_v\leq \max_{\alpha \in (1.4,2)} \min_{\xi \in (0,+\infty)}  f(\xi, \alpha).$$

 For each $\alpha \in (1.4,2)$, considering $f$ as a function of one argument $\xi$, 
 we get that $f$ attains its minimum for $\xi=\xi_0(\alpha) = \frac{\alpha^2 +3 - 3\alpha}{3(\alpha^2 +1 - \alpha)}$.
 
 Now, considering $f(\xi_0(\alpha), \alpha)$ as a function of $\alpha$, with standard arguments one can see that it is growing on the interval $(1.4,2)$ (since its derivative is positive), so its maximum is attained when $\alpha=2$. This gives an upper bound $f(\xi_0(2), 2)= \frac{13}{80}$.  \end{proof}

Similarly to the proof of proof  Proposition \ref{thdf22}, one can prove its symmetric analog:
\begin{proposition}\label{thdf221}
     Let $\alpha$ be the asymptotic critical exponent of an infinite closed-rich word  $v$ such that  it has long $(\alpha-\epsilon)$-powers for any $\epsilon>0$ but it does not have long $\alpha$-powers where  $1.4< \alpha \leq 2$. Then the closed-rich constant is at most $ \frac{13}{80}$.
\end{proposition}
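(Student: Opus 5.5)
We follow the proof of Proposition \ref{thdf22} closely, now with the hypotheses swapped. Here $v$ has long $(\alpha-\epsilon)$-powers for every $\epsilon>0$, but no long $\alpha$-powers, with $1.4<\alpha\le 2$.

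\textbf{Setup.} Fix small $\epsilon>0$ and $\delta>0$. Since $v$ has no long $\alpha$-powers, there is an $M$ such that every factor of $v$ of exponent at least $\alpha$ has length at most $M$. Choose $N\geq M/\delta$ and a factor $w=upu$ of $v$ of length $n\ge N$ and exponent $\alpha'\in[\alpha-\epsilon,\alpha)$. Put $|u|=l=n(\alpha'-1)/\alpha'$ and $|p|=r=n(2-\alpha')/\alpha'$. In the case $\alpha=2$ all factors of exponent $\ge 2$ are short, and the intersecting-occurrence arguments go through unchanged.

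\textbf{Extension and increments.} Extend the chosen occurrence of $w$ to the right by a word $x$ with $|x|=\xi n$. Let $t^{(i)}$ be the largest repeated suffix of $upux^{(i)}$. By Lemma \ref{lem0} (or Lemma \ref{lem02} when $t^{(i)}=\lambda$), the increment $\mathtt{Cl}(upux^{(i)})-\mathtt{Cl}(upux^{(i)-})$ is at most $|t^{(i)}|$. We bound $|t^{(i)}|$ by the same case analysis as before.
\begin{itemize}
\item If the two rightmost occurrences of $t^{(i)}$ intersect, they produce a square, so $|t^{(i)}|<n\delta$.
\item In Case 1.1 we obtain $gyg$ with $|gyg|\le l+r$. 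Either its exponent is at least $\alpha$, so it is short, or its exponent is below $\alpha$, which gives $|g|\le (l+r)(\alpha-1)/\alpha\le n(\alpha-1)/\alpha'^2$, essentially $n(\alpha-1)/\alpha^2$ up to $O(\epsilon)$.
\item Case 1.2 previously used that $w$ is not an $\alpha^+$-power, i.e.\ $w_{l+1}\ne w_{2l+r+1}$. We replace this by choosing $w$ to be a maximal repetition: extend $upu$ to the right as far as the period $l+r$ persists. The extended word has exponent $<\alpha$ (otherwise it would be a long $\alpha$-power) and $\ge\alpha-\epsilon$, so we may assume that letter mismatch.
\item Cases 2.1 and 2.2.2 need $\alpha<1.5$ and produce $uu'u$ of exponent $>1.5>\alpha$, hence short. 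This is unchanged, using $\alpha'$ in place of $\alpha$ and keeping $\alpha'>1.4$ by taking $\epsilon$ small.
\item Case 2.2.1 produces a square, hence short.
\end{itemize}
Altogether, each increment is at most $n(\alpha-1)/\alpha^2+O(\epsilon n)+\delta n+i$.

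\textbf{Summing.} Theorem \ref{pth} bounds $\mathtt{Cl}(w)$ by $n^2/6+O(n)$. Adding the increments for $i=1,\dots,\xi n$ gives
\[
C_v\le f(\xi,\alpha)+O(\epsilon+\delta),\qquad f(\xi,\alpha)=\frac{1}{(1+\xi)^2}\left(\frac16+\frac{\xi(\alpha-1)}{\alpha^2}+\frac{\xi^2}{2}\right).
\]
Let $\epsilon,\delta\to0$ and minimize over $\xi$. The minimizer is $\xi_0(\alpha)=\frac{\alpha^2-3\alpha+3}{3(\alpha^2-\alpha+1)}$, and $\alpha\mapsto f(\xi_0(\alpha),\alpha)$ is increasing on $(1.4,2]$, as shown in Proposition \ref{thdf22}. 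Hence $C_v\le f(\xi_0(2),2)=13/80$.

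\textbf{Main obstacle.} The delicate point is Case 1.2: the mismatch $w_{l+1}\neq w_{2l+r+1}$ must be recovered via the maximal-repetition choice. A second point to check is that replacing $\alpha$ by $\alpha'<\alpha$ in the bounds costs only $O(\epsilon)$. This holds because $(\alpha-1)/\alpha^2$ is continuous, and $l,r$ depend continuously on $\alpha'$.
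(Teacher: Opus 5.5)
Your proposal is correct and follows the route the paper intends; the paper gives no separate proof and only says the argument is symmetric to Proposition~\ref{thdf22}. You rerun the same case analysis with the shortness threshold ``exponent $\ge\alpha$'' and exponent $\alpha'\in[\alpha-\epsilon,\alpha)$ for $w$, then let $\epsilon,\delta\to 0$ in $f(\xi,\alpha)$. Your right-maximal extension of $w$ is the right way to recover the mismatch $w_{l+1}\neq w_{2l+r+1}$, which the paper's analogous Proposition~\ref{qa11} merely asserts. The only fix needed is cosmetic, in Cases 2.1 and 2.2.2: ``$>1.5>\alpha$'' should read ``$>1.5\ge\alpha$'', and if $\alpha>1.5$, taking $\epsilon<\alpha-1.5$ gives $\alpha'>1.5$, so these cases cannot occur.
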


\begin{proposition}\label{thdf1}
    Let $v$ be an infinite closed-rich word such that it has long $\alpha$-powers but it does not have long $\alpha^+$-powers where  $2 \leq \alpha \leq  2.5$. Then the closed-rich constant of $v$ is at most $\frac{967}{5827} $.
\end{proposition}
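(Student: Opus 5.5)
The plan is to follow the scheme of Proposition \ref{thdf22}. We take a long $\alpha$-power factor $w$ of $v$, extend it to the right by a word $x$ of length $\xi n$, and bound $\mathtt{Cl}(wx)$. Theorem \ref{pth} controls the part coming from $w$, and Lemma \ref{lem0} (in the form of Corollary \ref{cor0}) controls the increments caused by the letters of $x$.

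\textbf{Setup.} Fix $\epsilon,\delta>0$ and let $M$ bound the lengths of $(\alpha+\epsilon)$-powers in $v$. Take an $\alpha$-power $w\in Fac(v)$ of length $n\ge N=\lceil M/\delta\rceil$. Since now $\alpha\ge 2$, write $w=u^{\alpha}$ with $|u|=q=n/\alpha$, i.e. $w=u\,u\,u'$ with $u'$ a prefix of $u$ of length $(\alpha-2)q$. Every factor of $wx$ of exponent $>\alpha$ has length at most $\delta n$. Moreover, since $w$ is not an $\alpha^+$-power, the letter following $w$ in $v$ breaks the period $q$.

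\textbf{Bounding the increments.} As before, let $x^{(i)}$ be the prefix of $x$ of length $i$, and let $t^{(i)}$ be the longest repeated suffix of $wx^{(i)}$, with rightmost occurrences $t^{(i1)}$ and $t^{(i2)}$. If these occurrences overlap, we get a square, so $|t^{(i)}|<n\delta$. Otherwise write $t^{(i1)}=gx^{(i)}$, where $g$ is a suffix of $w$, and analyse where $t^{(i2)}$ sits inside $w$.
\begin{itemize}
\item If the offset between the occurrences of $g$ is a multiple of $q$, the period-breaking letter after $w$ gives a contradiction.
\item If the offset is not a multiple of $q$, then $g$ has two periods inside a factor of period $q$. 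By Lemma \ref{4lk} (or Fine--Wilf type reasoning), either a factor of exponent $>\alpha$ appears, forcing $|g|<n\delta$, or $|g|$ is bounded by an explicit linear quantity $c(\alpha)n$.
\end{itemize}
I expect $c(\alpha)$ to be of the order of $(\alpha-1)q$ minus the overlap forced by the prefix $u'$. The case split would mirror Cases 1.1--2.2.2 of Proposition \ref{thdf22}, according to whether $t^{(i2)}$ lies in the last period, straddles a boundary between copies of $u$, or contains a full copy of $u$. The last situation should again produce a factor of exponent $>\alpha$ unless $g$ is short. Combining the cases gives
\[
\mathtt{Cl}(wx^{(i)})-\mathtt{Cl}(wx^{(i)-})\le c(\alpha)n+i .
\]

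\textbf{Summing and optimising.} Summing over $i\le \xi n$ and using Theorem \ref{pth} for $w$ yields
\[
C_v\le f(\xi,\alpha)=\frac{1}{(1+\xi)^2}\left(\frac16+\xi c(\alpha)+\frac{\xi^2}{2}\right).
\]
Minimising in $\xi$ gives $\xi_0(\alpha)$ in closed form. Then we maximise $f(\xi_0(\alpha),\alpha)$ over $\alpha\in[2,2.5]$, checking the sign of its derivative as in Proposition \ref{thdf22}. The maximum should be attained at an endpoint or at a rational critical point, giving the value $\frac{967}{5827}$.

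\textbf{Main obstacle.} The hard step is the case analysis producing the sharp linear constant $c(\alpha)$. For $\alpha>2$ the word $w$ contains more than two full periods, so $t^{(i2)}$ can occur in many more positions. Each configuration must be shown either to create a factor of exponent $>\alpha$ (hence to be short) or to be bounded by $c(\alpha)n$. If a single-sided extension does not yield exactly $\frac{967}{5827}$, I would next refine the bound on $\mathtt{Cl}(w)$ itself, using that a near-periodic $w$ of exponent in $[2,2.5]$ has fewer than $n^2/6$ closed factors. Alternatively, I would extend $w$ on both sides and apply Lemma \ref{lem01} to the left extension, then re-optimise over the two extension lengths.
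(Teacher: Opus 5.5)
Your skeleton is the paper's: a one-sided extension of the $\alpha$-power $w=uuu'$, Theorem \ref{pth} for $w$, per-letter increments via Lemma \ref{lem0}, then a max--min over $\alpha$ and $\xi$. However, the two steps that carry the proof are either wrong or missing.

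\textbf{The overlap case.} Your claim that overlapping rightmost occurrences of $t^{(i)}$ ``give a square, so $|t^{(i)}|<n\delta$'' is false in this range. Occurrences at offset $d<|t^{(i)}|$ produce a factor of period $d$ and exponent $1+|t^{(i)}|/d$. This exponent is $>2$, but it need not exceed $\alpha\in[2,2.5]$, so such factors may be long. The argument worked in Proposition \ref{thdf22} only because there $\alpha<2$. The paper handles this configuration (its Case 2) precisely by \emph{not} passing to Corollary \ref{cor0}. It writes $t^{(i2)}=f^{(1)}f^{(2)}$ and $t^{(i1)}=f^{(2)}f^{(3)}$. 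The overlap $f^{(2)}$ is then a repeated suffix of $t^{(i)}$, so $|z^{(i)}|\ge|f^{(2)}|$, and the extra $|f^{(2)}|$ in the length estimates cancels in $|t^{(i)}|-|z^{(i)}|$. Since you plan to bound increments by $|t^{(i)}|$ alone, you lose exactly this cancellation.

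\textbf{The constant.} You never derive $c(\alpha)$, and your template cannot produce the stated value. If every increment is at most $c\,n+i$, then for $c<\frac13$
\[
\min_{\xi>0}\frac{1}{(1+\xi)^2}\Bigl(\tfrac16+\xi c+\tfrac{\xi^2}{2}\Bigr)=\frac{1-3c^2}{8-12c},
\]
which is how $\frac{13}{80}$ arises from $c=\frac14$. This expression is increasing in $c$, and for $c\ge\frac13$ the template gives nothing below $\frac16$. Your guess that $c$ is of order $(\alpha-1)q$ therefore already yields nothing. Reaching exactly $\frac{967}{5827}$ would require $\max_\alpha c(\alpha)$ to be the irrational root ($\approx 0.301$) of $17481c^2-11604c+1909=0$. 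So the final number is asserted, not derived.

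\textbf{What the paper's bound actually looks like.} It first shows $|t^{(i)}|\le l+i-1$, using $a\ne b$. Then it bounds $|t^{(i)}|-|z^{(i)}|$ by
\[
\max\Bigl\{r+2i,\ \tfrac{l+r+i}{2},\ n\delta+r+i,\ \tfrac{(1.5+\epsilon)l+r+i}{2.5+\epsilon}\Bigr\}.
\]
The last term comes from Case 1.2.3: the copy of $g$ that begins $t^{(i2)}$ overlaps the copy of $g$ ending the first $u$. In the subcase $j=0$ this gives $g=sqs$, and the exponent threshold $2.5$ forces $|sq|>|g|/(1.5+\epsilon)$. The slope-$2$ term $r+2i$ comes from Case 1.2.2. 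The paper splits the sum at $i=\frac{3(l-r)}{8}$ and restricts to $\frac{3(l-r)}{8}<\xi n\le l-r$. This gives
\[
C_v\le\frac{1}{(1+\xi)^2}\Bigl(\tfrac16+\tfrac{\xi(\alpha-2)}{\alpha}+\xi^2+\tfrac{9(3-\alpha)^2}{80\alpha^2}\Bigr),
\]
whose max--min is attained at $\alpha=2.5$ and equals $\frac{967}{5827}$.

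This bound is piecewise in $i$, has slope $2$ in $i$ for the longer range, and carries a constant quadratic term. None of that fits a single $c\,n+i$ template. The proposition is not established without this case analysis, in particular the $j=0$ overlap case with the $2.5$ threshold.
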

\begin{proof}
Since $v$  has long $\alpha$-powers but it does not have long $\alpha^+$-powers, then for each $\epsilon >0$, there exists a positive integer $M$ such that the length of any factor of $v$ with exponent $\alpha + \epsilon$ is at most $M$.
For a fixed $\delta >0$, we set $N=\lceil \frac{M}{\delta}\rceil$. Let $w$ be a $\alpha$-power factor of $v$ of length $n\geq N$ (such $w$ exists in $v$ by the condition of the proposition). Note that for each $(\alpha+\epsilon)$-power factor $z$ of $w$, 
where $\epsilon>0$, its length is at most $M\leq \delta N\leq \delta n $; we will use this fact several times in the proof.
Then $w$ is of the form $w=uuu^{(1)}$ where $u \in \Sigma^+$, $u^{(1)} \in Pref(u)$, $|u|=l$, $|u^{(1)}|=r$, and $\frac{2l+r}{l}=\alpha$. Since $|w|=2l+r=n$, we have $l=\frac{n}{\alpha}$ and $r=\frac{n(\alpha-2)}{\alpha}$.
Consider some occurrence of $w$ in $v$ and its extension to the right by a factor of length $\xi n$ (we will choose $\xi$ later). 
We let $x$ denote this extension and $w'$ be the extended factor, i.e.,  $w'=uu u^{(1)} x \in Fac(v)$ where $|x|=\xi n$. 
Let $u=u^{(1)} a u^{(2)}$ and $x=bx'$ where $a, b \in \Sigma$, $u^{(2)}, x' \in \Sigma^*$. Since $w$ is an $\alpha$-power and not an $\alpha^+$-power, we have $w_{l+r+1} \neq w_{2l+r+1}$, i.e., $a \neq b$.
Let $x^{(i)} \in \Sigma^+$ be a prefix of $x$ of length $i$, $t^{(i)}$ be the largest repeated suffix of $uu u^{(1)} x^{(i)}$ and $z^{(i)}$ be the largest repeated suffix of $t^{(i)}$. 
If $t^{(i)} = \lambda$, then $\mathtt{Cl}(uuu^{(1)} x^{(i)})- \mathtt{Cl}(uuu^{(1)} x^{(i)-}) = 1$.
Now, consider $t^{(i)} \in \Sigma^+$. 
Let us denote the rightmost occurrence of $t^{(i)}$ in $uu u^{(1)} x^{(i)}$ by $t^{(i1)}$ and the rightmost occurrence  $t^{(i)}$ in $uu u^{(1)} x^{(i)-}$ by $t^{(i2)}$. 

We first show that $|t^{(i)}| \leq l+i-1$. Assume the converse: suppose that $|t^{(i)}| \geq l+i$. Since $|u|=l$ and $uuu^{(1)}x^{(i)}= u^{(1)} a u^{(2)} u^{(1)} a u^{(2)} u^{(1)} x^{(i)}$, we have $t^{(i1)} = t' a u^{(2)} u^{(1)} x^{(i)}$ for some $t' \in \Sigma^*$. As $b \in Pref(x^{(i)})$, $t' a u^{(2)} u^{(1)} b$ is a prefix of $t^{(i1)}$, i.e., $t' a u^{(2)} u^{(1)} b \in Pref(t^{(i2)})$. Then as $t^{(i2)}$ is a factor of $uuu^{(1)}x^{(i)-}$, $t' a u^{(2)} u^{(1)} b$ is a factor of $uuu^{(1)}$. Since $|a u^{(2)} u^{(1)} b|=l+1$ and $l$ is the period of $uuu^{(1)}$, $t' a u^{(2)} u^{(1)} b \in Fac(uuu^{(1)})$ implies $a=b$, which is a contradiction. Thus, $|t^{(i)}| \leq l+i-1$. 

Now, if $|t^{(i1)}| \leq r+i$, then we will use this bound for $|t^{(i)}|$. 
Consider now the case $|t^{(i1)}| > r+i$. Due to this inequality and the condition $|t^{(i)}| \leq l+i-1$, the factor $t^{(i1)}$ begins inside the second occurrence of $u$ (i.e., between positions $l$ and $2l$). We let $g$ denote the factor which occurs as the intersection of $t^{(i1)}$ and the second occurrence of $u$, i.e., $g$ is a suffix of $u$ and a prefix of $t^{(i1)}$.
Now, we have two cases depending on whether $t^{(i1)}$ and $t^{(i2)}$ intersect or not:
\begin{itemize}
    \item \textbf{Case 1} : $t^{(i1)}$ and $t^{(i2)}$ do not intersect. \\
    Then $t^{(i2)} y t^{(i1)} \in Suff(uu u^{(1)} x^{(i)})$ for some $y \in \Sigma^*$. Now, we have the following cases:
    \begin{itemize}
        \item \textbf{Case 1.1.}: $|t^{(i2)} y t^{(i1)} | \leq l+r+i$.\\
        Then $2|t^{(i)}| \leq l+r+i$, i.e., $|t^{(i)}| \leq \frac{l+r+i}{2}$, and we use this bound in this case.

        \item \textbf{Case 1.2.}: $|t^{(i2)} y t^{(i1)} | > l+r+i$. \\
        Then $t^{(i2)}$ has a prefix, say $p \in \Sigma^+$, which is also a suffix of $u$. Now, $|p| \neq |g|$ as $u^{(1)} b$ is not a prefix of $u$. Now,      
        we have the following three cases:

        \begin{itemize}
            \item \textbf{Case 1.2.1.}: If $|g|< |p| < 2|g|$ (See Fig. \ref{vgvg}):\\
            Then, as $g \in Suff(u) \cap Pref(t_{i2})$, both of these $g$ intersect with each other, i.e., $g=s^{(1)} s^{(2)} = s^{(2)} s^{(3)}$ for some $s^{(1)}, s^{(2)}, s^{(3)} \in \Sigma^+$. This implies $s^{(1)} = p^{(1)} q^{(1)}$, $s^{(2)} = (p^{(1)} q^{(1)})^j p^{(1)}$ and $s^{(3)} = q^{(1)} p^{(1)}$ for some $j \geq 0$, $p^{(1)} \in \Sigma^+$ and $q^{(1)} \in \Sigma^*$. 
            It follows that $p^{(1)} q^{(1)} g \in Suff(u)$ and $g q^{(1)} p^{(1)} \in Pref(t^{(i2)})$. 
            Then we have $p^{(1)} q^{(1)} g q^{(1)} p^{(1)} = (p^{(1)} q^{(1)})^{j+3} p^{(1)} \in Fac(uu^{(1)} x^{(i)})$ (the factor starting at position $2l-|p|+1$ in $w'$). 
            Thus, as the exponent of $p^{(1)} q^{(1)} g q^{(1)} p^{(1)}$ is greater than $\alpha+ \epsilon$, 
            we have $|p^{(1)} q^{(1)} g q^{(1)} p^{(1)}| < n \delta$, i.e, $|g| < n\delta$. Therefore, as $|t^{(i1)}| = |g|+r+i $, we have  $|t^{(i)}| < n \delta + r+i $.

\begin{figure}[h]
    \centering
        \begin{tikzpicture}
                \draw (2,0) arc[start angle=0, end angle=180, radius=1] node[midway, above] {$u, l$};
                \draw (4,0) arc[start angle=0, end angle=180, radius=1] node[midway, above] {$u, l$};
                \draw (5.4,0) arc[start angle=0, end angle=180, radius=.7] node[midway, above] {$u^{(1)},~ r$};
                \draw (5.7,0) arc[start angle=0, end angle=-180, radius=1.2] node[midway, above] {$t^{(i1)}$};
                \draw[<->] (3.4,0.15) -- (3.9, 0.15) node[midway, above] {$g$};
                \draw (3.1,0) arc[start angle=0, end angle=-180, radius=1.2] node[midway, above] {$t^{(i2)}$};
                \draw[<->] (1.3,0.15) -- (1.9, 0.15) node[midway, above] {$g$};
                \draw[<->] (0.8,0.3) -- (1.5, 0.3) node[midway, above] {$g$};
                \draw[<->] (0.8, -0.15) -- (1.9,- 0.15) node[midway, below] {$p$};
                \draw[-] (0,0) -- (6.8,0);
                \draw[<->] (5.4,0.15) -- (5.7,0.15)node[right, above] {$x^{(i)}$};
                \end{tikzpicture}
                \caption{{Illustration to the proof of Proposition \ref{thdf1}, Case 1.2.1 }}
                \label{vgvg}
    \end{figure}

    \begin{figure}[h]
        \centering
        \begin{tikzpicture}
                \draw (2,0) arc[start angle=0, end angle=180, radius=1] node[midway, above] {$u, l$};
                \draw (4,0) arc[start angle=0, end angle=180, radius=1] node[midway, above] {$u, l$};
                \draw (5.4,0) arc[start angle=0, end angle=180, radius=.7] node[midway, above] {$u^{(1)},~ r$};
                \draw (5.7,0) arc[start angle=0, end angle=-180, radius=1.2] node[midway, above] {$t^{(i1)}$};
                \draw[<->] (3.4,0.15) -- (3.9, 0.15) node[midway, above] {$g$};
                \draw (3.1,0) arc[start angle=0, end angle=-180, radius=1.3] node[midway, above] {$t^{(i2)}$};
                \draw[<->] (1.4,0.15) -- (1.9, 0.15) node[midway, above] {$g$};
                \draw[<->] (0.6,0.15) -- (1.1, 0.15) node[midway, above] {$g$};
                \draw[<->] (1.112,0.15) -- (1.3555, 0.15) node[midway, above] {$y'$};
                \draw[<->] (0.6, -0.15) -- (1.9,- 0.15) node[midway, below] {$p$};
                \draw[<->] (2.1,0.15) -- (3, 0.15) node[midway, above] {$h$};
                \draw[-] (0,0) -- (6.8,0);
                \draw[<->] (5.4,0.15) -- (5.7,0.15)node[right, above] {$x^{(i)}$};
                \end{tikzpicture}
                \caption{{Illustration to the proof of Proposition \ref{thdf1}, Case 1.2.2 }}
                \label{vgvg2}
\end{figure}

            
            \item \textbf{Case 1.2.2.} : If $|p| \geq 2|g|$ (see Fig. \ref{vgvg2}):
            
            Then $p= g y' g$ and $t^{(i2)} = g y' g h$ for some $y' \in \Sigma^*$ and $ h \in \Sigma^+$. 

            If $|y'| >r$, then as $gy'g \in Pref(t^{(i1)})$, we have $|g| \leq i$. Indeed, since $gy'g \in Pref(t^{(i1)})$ and $|y'| >r$, we have $u^{(1)} \in Pref(y')$. Then the last $g$ in $gy'g$ lies in $x^{(i)}$. This gives $|g| \leq i$. So, since $|t^{(i1)}| = |g|+r+i $, we have  $|t^{(i)}| \leq r+ 2i $. 
            
            Now, consider $|y'| \leq r$.
            \begin{itemize}
                \item If $|h|>r$, then as $y'gh \in Suff(t^{(i1)})$ and $|y'gh|=r+i$, we have $|g|<i$.  Then as $|t^{(i1)}| = |g|+r+i $, we have $|t^{(i)}| \leq r+ 2i $.
                
                \item  If $|h|\leq r$, then $h \in Pref(u^{(1)})$ and $u^{(1)} \in Pref(y'gh)$. Indeed, $u^{(1)}$ occurs in $t^{(i1)}$ at position $|g|+1$, then it also occurs in $t^{(i2)}$ at the same position $|g|+1$, and we also have there an occurrence of $y'gh$, which is longer than $u^{(1)}$, so  $u^{(1)}$ is a prefix of $y'gh$. Since $h\in Pref(u^{(1)})$, we also have $h \in Pref(y'gh)$). Since $t^{(i2)} = gy'gh$, this implies that $|z^{(i)}| \geq |gh|$. Thus, $|t^{(i)}|-|z^{(i)}| \leq |gy'| \leq r+i$.

            \end{itemize}


            \item \textbf{Case 1.2.3.} : If $|p|<|g|$ (See Fig. \ref{vgvg3}): 
            
            Then as $g \in Suff(u)$ and $g \in Pref(t^{(i2)})$, the occurrence of $g$ as a suffix of the first occurrence of $u$ and the occurrence of $g$ as a prefix of $t^{(i2)}$ intersect. 
     Then $p$ is their intersection and  $g= h^{(1)} p = p h^{(3)}$ for some $h^{(1)}, h^{(3)} \in \Sigma^+$. This implies $h^{(1)} = s q$, $p = (s q)^j s$ and $h^{(3)} = q s$ for some $j \geq 0$, $s\in \Sigma^+$ and $q \in \Sigma^*$.
            If $j \geq 1$, then as exponent of $h^{(1)} p h^{(3)}$ is greater than $\alpha + \epsilon$, 
            we have $|h^{(1)} p h^{(3)}| <  \delta n$, i.e., $|g|<n\delta$, i.e.,  $|t^{(i)}| < n\delta+ r+ i $.
            Now suppose that $j=0$ and consider the following cases:

            \begin{itemize}
                \item \textbf{Case 1.2.3.1.} :  If $|h^{(1)}| \leq\frac{|g|}{1.5+\epsilon}$, then using $g=sqs$, we have $\frac{|h^{(1)} p h^{(3)}|}{|h^{(1)}|} \geq 2.5+\epsilon$, i.e.,                
                $h^{(1)} p h^{(3)}= h^{(1)} g =(sq)^\beta$ where $\beta \geq 2.5+\epsilon$. Then  $|h^{(1)} p h^{(3)}| <  \delta n$, i.e., $|g|<n\delta$. Thus, $|t^{(i)}| < n\delta+ r+ i $, i.e., $|t^{(i)}|-|z^{(i)}| < n\delta +r+i$.
                
                \item \textbf{Case 1.2.3.2.} : If $|h^{(1)}| > \frac{|g|}{1.5+\epsilon}$, then as $2|t^{(i)}| \leq 2l +r +i -(l-|g|+|h^{(1)}|)$, we have $2|t^{(i)}| \leq l +r +i + |g|-\frac{|g|}{1.5+\epsilon}$. This gives $2|t^{(i)}| \leq l +r +i + \frac{(0.5+\epsilon)|g|}{1.5+\epsilon}$. Using $|g|=|t^{(i)}|-r-i$, we have 
                  $|t^{(i)}| \leq \frac{1.5+\epsilon}{2.5+\epsilon} l + \frac{r+i}{2.5+\epsilon}$.            
            \end{itemize}


\begin{figure}[h]
    \centering
        \begin{tikzpicture}
                \draw (2,0) arc[start angle=0, end angle=180, radius=1.2] node[midway, above] {$u, l$};
                \draw (4.4,0) arc[start angle=0, end angle=180, radius=1.2] node[midway, above] {$u, l$};
                \draw (5.8,0) arc[start angle=0, end angle=180, radius=.7] node[midway, above] {$u^{(1)},~ r$};
                \draw (6,0) arc[start angle=0, end angle=-180, radius=1.3] node[midway, above] {$t^{(i1)}$};
                \draw[<->] (3.4,0.15) -- (4.3, 0.15) node[midway, above] {$g$};
                \draw (3.33,0) arc[start angle=0, end angle=-180, radius=1.1] node[midway, above] {$t^{(i2)}$};
                \draw[<->] (.8,0.15) -- (1.9, 0.15) node[midway, above] {$g$};
                 \draw[] (0.65,0.15) -- (0.65, -0.15);
                 \draw[] (2.42,0.15) -- (2.42, -0.15);
                \draw[<->] (1.12, -0.3) -- (1.9,- 0.3) node[midway, below] {$p$};
                \draw[<->] (1.13, -0.15) -- (2.4,- 0.15) node[right, below] {$g$};
                \draw[<->] (0.7, -0.15) -- (1.1,- 0.15) node[midway, below] {$h^{(1)}$};
                \draw[-] (-0.4,0) -- (6.8,0);
                \draw[] (5.93,0.02) -- (6.1,0.02)node[right, above] {$x^{(i)}$};
                \end{tikzpicture}
                \caption{{Illustration to the proof of Proposition \ref{thdf1}, Case 1.2.3 }}
                \label{vgvg3}
\end{figure}

\begin{figure}[h]
        \centering
        \begin{tikzpicture}
                \draw (2,0) arc[start angle=0, end angle=180, radius=1.2] node[midway, above] {$u, l$};
                \draw (4.4,0) arc[start angle=0, end angle=180, radius=1.2] node[midway, above] {$u, l$};
                \draw (5.8,0) arc[start angle=0, end angle=180, radius=.7] node[midway, above] {$u^{(1)},~ r$};
                \draw (6,0) arc[start angle=0, end angle=-180, radius=1.3] node[midway, above] {$t^{(i1)}$};
                \draw[<->] (3.4,-0.15) -- (4.3, -0.15) node[midway, below] {$g$};
                \draw[<->] (3.35,0.15) -- (3.6, 0.15) node[midway, above] {$f^{(2)}$};
                \draw (3.6,0) arc[start angle=0, end angle=-180, radius=1.3] node[midway, above] {$t^{(i2)}$};
                \draw[<->] (.7,0.15) -- (1.9, 0.15) node[midway, above] {$g$};
                 \draw[] (0.65,0.15) -- (0.65, -0.15);
                 \draw[] (2.42,0.15) -- (2.42, -0.15);
                \draw[<->] (1.12, -0.3) -- (1.9,- 0.3) node[midway, below] {$p$};
                \draw[<->] (1.13, -0.15) -- (2.4,- 0.15) node[right, below] {$g$};
                \draw[<->] (0.7, -0.15) -- (1,- 0.15) node[midway, below] {$h^{(1)}$};
                \draw[-] (-0.4,0) -- (6.8,0);
                \draw[] (5.93,0.02) -- (6.2,0.02)node[midway, above] {$x^{(i)}$};
                \end{tikzpicture}
                \caption{{Illustration to the proof of Proposition \ref{thdf1}, Case 2.2.3 }}
                \label{vgvg23}
\end{figure}


          
        \end{itemize}
    \end{itemize}

        \item \textbf{Case 2} : {Let $t^{(i1)}$ and $t^{(i2)}$ intersect}.
        
        Then, $t^{(i2)}=f^{(1)} f^{(2)}$ and $t^{(i1)}=f^{(2)} f^{(3)}$ for some $f^{(1)}, f^{(2)}, f^{(3)} \in \Sigma^+$. Then $|z^{(i)}| \geq |f^{(2)}|$. Now, we have the following cases:
        \begin{itemize}
            \item \textbf{Case 2.1.} : Let $|f^{(1)} f^{(2)} f^{(3)}| \leq l+r+i $. \\
            Then $2|t^{(i)}| \leq l+r+i +|f^{(2)}|$, i.e., $|t^{(i)}| \leq \frac{l+r+i}{2} + \frac{|f^{(2)}|}{2}$. Since  $|z^{(i)}| \geq |f^{(2)}|$, $|t^{(i)}|-|z^{(i)}| \leq \frac{l+r+i}{2} + \frac{|f^{(2)}|}{2} -|f^{(2)}|$, i.e., $|t^{(i)}|-|z^{(i)}| \leq \frac{l+r+i}{2}$.
            
            \item \textbf{Case 2.2.} : Let $|f^{(1)} f^{(2)} f^{(3)}| > l+r+i $ (We will proceed similarly to Case 1.2.). \\
            Then $t^{(i2)}$ has a prefix, say $p \in \Sigma^+$, which is also a suffix of $u$. Now, $|p| \neq |g|$ as $u^{(1)} b$ is not a prefix of $u$. Now, we have the following three cases:
                \begin{itemize}
                    \item \textbf{Case 2.2.1.}: If $|g|< |p| < 2|g|$ :\\
                    Then, similarly to Case 1.2.1., we have $|t^{(i)}|-|z^{(i)}| \leq n\delta + r+i$.

                     \item \textbf{Case 2.2.2.}: If $|p| \geq 2|g|$ : \\
                     Then,  similarly to Case 1.2.2.,   we have  $|t^{(i)}|-|z^{(i)}| \leq r+2i$.

                      \item \textbf{Case 2.2.3.}: If $|p|<|g|$ (See Fig. \ref{vgvg23}): 
                      
                      Then the occurrence of $g$ as a suffix of the first occurrence of $u$ and the occurrence of $g$ as a prefix of $t^{(i2)}$ intersect. Then $p$ is their intersection and $g= h^{(1)} p = p h^{(3)}$ for some $h^{(1)}, h^{(3)} \in \Sigma^+$. This implies $h^{(1)} = s q$, $p = (sq)^js$ and $h^{(3)} = qs$ for some $j \geq 0$, $s\in \Sigma^+$ and $q \in \Sigma^*$.
                      If $j \geq 1$, then similarly to Case 1.2.3., we have $|t^{(i)}|-|z^{(i)}| < n\delta +r+i$. In the case $j=0$  we consider the following cases:

            \begin{itemize}
                \item \textbf{Case 2.2.3.1.} :  If $|h^{(1)}| \leq\frac{|g|}{1.5+\epsilon}$, 
                then using $g=sqs$, we have $\frac{|h^{(1)} p h^{(3)}|}{|h^{(1)}|} \geq 2.5+\epsilon$, i.e., $h^{(1)} p h^{(3)}= h^{(1)} g =(sq)^\beta$ where $\beta \geq 2.5+\epsilon$. Then  $|h^{(1)} p h^{(3)}| <  \delta n$, i.e., $|g|<n\delta$. Thus, $|t^{(i)}| < n\delta+ r+ i $.
                
                \item \textbf{Case 1.2.3.2.} : If $|h^{(1)}| > \frac{|g|}{1.5+\epsilon}$, then as $2|t^{(i)}| \leq 2l +r +i -(l-|g|+|h^{(1)}|)+|f^{(2)}|$, we have $2|t^{(i)}| \leq l +r +i + |g|-\frac{|g|}{1.5+\epsilon}+|f^{(2)}|$. This gives $2|t^{(i)}| \leq l +r +i + \frac{(0.5+\epsilon)|g|}{1.5+\epsilon}+|f^{(2)}|$. 
                Using $|g|=|t^{(i)}|-r-i$, we have 
                $|t^{(i)}| \leq  \frac{1.5+\epsilon}{2.5+\epsilon} l + \frac{r+i}{2.5+\epsilon}  + \frac{1.5+\epsilon}{2.5+\epsilon} |f^{(2)}| $. 
                Then, as $|z^{(i)}|\geq |f^{(2)}|$, we have  $|t^{(i)}| - |z^{(i)}| \leq  \frac{1.5+\epsilon}{2.5+\epsilon} l + \frac{r+i}{2.5+\epsilon}  + \frac{1.5+\epsilon}{2.5+\epsilon} |f^{(2)}| - |f^{(2)}|$, 
                i.e.,   $|t^{(i)}| - |z^{(i)}| \leq  \frac{1.5+\epsilon}{2.5+\epsilon} l + \frac{r+i}{2.5+\epsilon}$.           
            \end{itemize}


                \end{itemize}
        
        \end{itemize}
    
\end{itemize}

Therefore, combining all previous cases for $t^{(i)} \in \Sigma^+$, we have:


\[ |t^{(i)}|-|z^{(i)}|  \leq  
    \max\left\{r+ 2i,  \frac{l+r+i}{2}, n \delta +r +i,       \frac{(1.5+\epsilon)l}{2.5+\epsilon} + \frac{r}{2.5+\epsilon} + \frac{i}{2.5+\epsilon}\right\}
\]

 Analyzing these four expressions, one can see that if  $|x| \leq \frac{3(l-r)}{8}$, then   the fourth one gives the maximum, and if
$\frac{3(l-r)}{8} < |x| \leq l-r$, then the first one gives the maximum (under the condition that $\delta$ is small enough and taking into account that we can take $\epsilon$ is as small as needed). Consider $\frac{3(l-r)}{8} < |x| \leq l-r$ (note that $|x| \leq \frac{3(l-r)}{8}$ gives a worse bound, which was reported at WORDS conference  \cite{DBLP:conf/cwords/MaityP25}).

So, applying Theorem \ref{pth} for $w$ and adding new closed factors ending at positions inside $x$, we obtain
\begin{align*}
    \mathtt{Cl}(w') 
                 &\leq \frac{n^2}{6} +\frac{7n}{6}+1 + \sum_{i=1}^{\frac{3(l-r)}{8}}  \left(\frac{(1.5+\epsilon)l}{2.5+\epsilon} + \frac{r}{2.5+\epsilon}  + \frac{i}{2.5+ \epsilon} \right) +  \sum_{i=\frac{3(l-r)}{8}+1}^{\xi n} (r+2i)\end{align*}

Summing arithmetic progressions, expressing $l$ and $r$ in terms of $n$ and $\alpha$ and ignoring linear terms, we obtain 

\begin{align*}
    \mathtt{Cl}(w')                  &\leq 
                                          \frac{n^2}{6}  + 
                     \frac{3(1.5+\epsilon)(3-\alpha)n^2}{8(2.5+\epsilon)\alpha^2}  
                      + \frac{3 (\alpha-2) (3-\alpha)n^2}{8(2.5+\epsilon)\alpha^2} 
                        + \frac{9 (3-\alpha)^2 n^2}{128(2.5+\epsilon)\alpha^2} \\ & +
                        \frac{\xi (\alpha-2) n^2}{\alpha}  
                         - \frac{3 (\alpha-2) (3-\alpha) n^2}{8\alpha^2} +
                        \xi^2 n^2 
                         - \frac{9(3-\alpha)^2 n^2}{64\alpha^2}. \\ 
\end{align*}


Recall that we have $|w'|=n+|x|= (1+\xi)n$. Using Definition \ref{eq:C_u} and taking into account the fact that we can choose $\epsilon$ as small as we want, we obtain the following upper bound for the closed-rich constant $C_v$ of $v$: 

\begin{align*}
   C_v\leq &\inf \left\{ \frac{ \mathtt{Cl}(w') }{|w'|^2 } : w'=wx \in Fac(v), |w|=n, |x|=\xi n,  exp(w)=\alpha, n \geq N \right\} \\
& \leq \frac{1}{(1+\xi)^2} \left( \frac{1}{6} +
                        \frac{\xi (\alpha-2) }{\alpha}  +
                        \xi^2    + 
                     \frac{9(3-\alpha)^2}{80\alpha^2} \right).
\end{align*}

For convenience, we rewrite it using $t=\frac{1}{\alpha}$ and $z=1+\xi$:
$$C_v \leq \frac{1}{z^2} \left( \frac{1}{6} + (z-1)(1-2t) + (z-1)^2 + \frac{9(3t-1)^2}{80}  \right).$$

We let denote the latter function by $f(t,z)$. To get an upper bound for $C_v$, we need to find $\max_t \min_z f(t,z)$, under the conditions $2 \leq \alpha \leq 2.5$ and $ \frac{3(l-r)}{8} < \xi n \leq l-r$, so in terms of $z$ and $t$ we have $0.4 \leq t \leq 0.5$ and $ \frac{9t+5}{8} < z \leq 3t$:

   $$C_v \leq \max_{t \in [0.4,0.5]} \min_{z \in (\frac{9t+5}{8}, 3t]}  f(t,z).$$
   
Analyzing the derivative $f_z(t,z)$, we get that for each $t$ the function  $f(t,z)$ reaches its global minimum when $z=z_0(t)=\frac{67 +318 t + 243t^2}{120(1+2t)}$. Therefore, for each $t$, the minimum of $f(t,z)$ in the interval $z \in (\frac{9t+5}{8}, 3t]$ is $\min \{f(t,z_0(t)), f(t, \frac{9t+5}{8}), f(t,3t) \}$. Since for $t \in [0.4, 0.5]$ we have  $f(t,z_0(t)) < f(t, \frac{9t+5}{8}) < f(t,3t)$, then 
 $f(t, z_0(t))$ gives the minimum. We now need the maximum value of $f(t,z_0(t))$. The derivative $f'(t,z_0(t))$ is negative in the interval $[0.4,0.5]$, so $f(t,z_0(t))$ attains its maximum value at $t=0.4$. This gives an upper bound $f(0.4, z_0(0.4))= \frac{967}{5827} $. 
\end{proof}

Using proof techniques similar to the proof of Proposition \ref{thdf1}, we can prove its analog for ($\alpha-\varepsilon$)-powers:

\begin{proposition}\label{qa11}
     Let $\alpha$ be the asymptotic critical exponent of an infinite closed-rich word  $v$ such that   it has long $(\alpha-\epsilon)$-powers for any $\epsilon>0$ but it does not have long $\alpha$-powers where  $2< \alpha \leq 2.5$. Then, the closed-rich constant is at most $\frac{967}{5827}$.
\end{proposition}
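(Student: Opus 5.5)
Since $v$ has no long $\alpha$-powers but has long $(\alpha-\epsilon)$-powers, I would redo the proof of Proposition \ref{thdf1} with $\alpha$ replaced by $\alpha'=\alpha-\epsilon'$ for a small $\epsilon'>0$.

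\textbf{Setup.} Fix $\delta>0$. Because $v$ has no long $\alpha$-powers, there is $M$ such that every factor of $v$ of exponent at least $\alpha$ has length at most $M$. Set $N=\lceil M/\delta\rceil$. Choose a factor $w$ of length $n\ge N$ whose exponent $\alpha'$ lies in $[\alpha-\epsilon',\alpha)$, and take it maximal to the right. Then $w=uuu^{(1)}$ with $|u|=l=n/\alpha'$ and $|u^{(1)}|=r=n(\alpha'-2)/\alpha'$. We may assume $\alpha-\epsilon'>2$, since $\alpha>2$.

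\textbf{Step 1: the period breaks after $w$.} Extend $w$ to the right by $x=bx'$ with $|x|=\xi n$. Maximality gives $w_{l+r+1}=a\neq b$; if instead $a=b$, replace $w$ by the longer word $wb$, whose exponent is still $<\alpha$ once $n$ is large. This yields exactly the key fact used in Proposition \ref{thdf1}, namely $|t^{(i)}|\le l+i-1$.

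\textbf{Step 2: rerun the case analysis with threshold $\alpha$.} The case analysis of Proposition \ref{thdf1} (Cases 1.1 through 2.2.3.2) only ever uses the following fact: a factor of exponent strictly larger than the exponent of $w$ (there, $\alpha+\epsilon$) has length less than $n\delta$. I would replace every such use by the statement ``exponent $\ge\alpha$ implies length $\le M\le n\delta$.''

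Each exponent estimate in those cases gives exponent at least $2.5+\epsilon$, at least $3$, or something comparably large. All of these are at least $\alpha$ whenever $\alpha\le 2.5$ and $\epsilon>0$. In Case 1.2.3.1 I would choose the threshold $|h^{(1)}|\le |g|/(1.5+\epsilon)$ exactly as before; this gives exponent $\ge 2.5+\epsilon>\alpha$.

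So the bound
\[ |t^{(i)}|-|z^{(i)}|\le \max\left\{r+2i,\ \tfrac{l+r+i}{2},\ n\delta+r+i,\ \tfrac{(1.5+\epsilon)l+r+i}{2.5+\epsilon}\right\} \]
holds verbatim, with $l,r$ expressed through $\alpha'$. Lemma \ref{lem0} applies exactly as in Proposition \ref{thdf1}.

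\textbf{Step 3: summation and optimisation.} Sum over $i$ as before and apply Theorem \ref{pth} to $w$. This gives
\[ C_v\le f(t',z)\quad\text{with } t'=1/\alpha'\in(0.4,0.5). \]
Take the minimum over $z$ at $z_0(t')$ and let $\epsilon,\epsilon',\delta\to 0$. Then $C_v\le f(1/\alpha,z_0(1/\alpha))$. Since $f(t,z_0(t))$ is decreasing on $[0.4,0.5]$, this is at most $f(0.4,z_0(0.4))=\frac{967}{5827}$.

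\textbf{Main obstacle.} The delicate point is Step 2. I must verify that none of the subcases relied on the sharp comparison ``exponent $>\alpha$'' at a threshold that could now fall between $\alpha'$ and $\alpha$. Case 1.2.3.1 is the one to check carefully, because its $2.5$ threshold coincides with the endpoint $\alpha=2.5$. Strictness there is guaranteed by the extra $\epsilon$, so the bound $2.5+\epsilon>\alpha$ still holds.
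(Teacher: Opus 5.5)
Your proposal is correct and is essentially the paper's argument. As the paper does, you pick a long factor of exponent just below $\alpha$, rerun the case analysis of Proposition~\ref{thdf1} using only the fact that factors of exponent at least $\alpha$ are short (every exponent produced there is at least $2.5+\epsilon$ or at least $3$), and conclude with the same optimisation. The differences are minor: you justify $a\neq b$ by taking $w$ maximal to the right, which the paper simply asserts, and you finish via monotonicity of $f(t,z_0(t))$ at $t'=1/\alpha'$, whereas the paper compares term by term the bounds for exponent $\alpha-\epsilon_1$ with those for exponent $\alpha$.
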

\begin{proof} 
  Since $v$  has long $(\alpha-\epsilon)$-powers for any $\epsilon>0$ but it does not have long $\alpha^*$-powers, then for each $\eta \geq0$, there exists a positive integer $M$ such that the length of any factor of $v$ with exponent $ \alpha + \eta$ is at most $M$.
For a fixed $\delta >0$, we set $N=\lceil \frac{M}{\delta}\rceil$. 
Let $w$ be a $(\alpha-\epsilon_1)$-power factor of $v$ of length $n\geq N$ where $\epsilon_1 > 0$ and $(\alpha-\epsilon_1)>2$ (such $w$ exists in $v$ by the condition of the proposition).
Note that for each $(\alpha+\eta)$-power factor $z$ of $w$, 
where $\eta \geq 0$, its length is at most $M\leq \delta N\leq \delta n $; 
Consider $\alpha -\epsilon_1 = \alpha_{\epsilon_1}$; so $2 < \alpha_{\epsilon_1} < 2.5$.
Then $w$ is of the form $w=uuu^{(1)}$ where $u \in \Sigma^+$, $u^{(1)} \in Pref(u)$, $|u|=l_{\epsilon_1}$, $|u^{(1)}|=r_{\epsilon_1}$, and $\frac{2l_{\epsilon_1}+r_{\epsilon_1}}{l_{\epsilon_1}}= \alpha_{\epsilon_1}$.
Since $|w|=2l_{\epsilon_1}+r_{\epsilon_1}=n$, we have  $l_{\epsilon_1}=\frac{n}{\alpha_{\epsilon_1}}$ and $r_{\epsilon_1}=\frac{n (\alpha_{\epsilon_1}-2)}{\alpha_{\epsilon_1}}$.
Let $l=\frac{n}{\alpha}$ and $r=\frac{n(\alpha-2)}{\alpha}$ where $2 < \alpha \leq 2.5$.

Consider some occurrence of $w$ in $v$ and its extension to the right by a factor of length $\xi n$ (we will choose $\xi$ later). 
We let $x$ denote this extension and $w'$ be the extended factor, i.e.,  $w'=uu u^{(1)} x \in Fac(v)$ where $|x|=\xi n$.  
Let $u=u^{(1)} a u^{(2)}$ and $x=bx'$ where $a, b \in \Sigma$, $u^{(2)}, x' \in \Sigma^*$. Since $w$ is an $\alpha_{\epsilon_1}$-power and not an $\alpha_{\epsilon_1}^+$-power, we have $w_{l_{\epsilon_1}+r_{\epsilon_1}+1} \neq w_{2l_{\epsilon_1}+r_{\epsilon_1}+1}$, i.e., $a \neq b$.
Let $x^{(i)} \in \Sigma^+$ be a prefix of $x$ of length $i$, $t^{(i)}$ be the largest repeated suffix of $uu u^{(1)} x^{(i)}$ and $z^{(i)}$ be the largest repeated suffix of $t^{(i)}$. 
If $t^{(i)} = \lambda$, then $\mathtt{Cl}(uuu^{(1)} x^{(i)})- \mathtt{Cl}(uuu^{(1)} x^{(i)-}) = 1$.
Now, consider $t^{(i)} \in \Sigma^+$. 
Then, similar to the Proposition \ref{thdf1}, if  $|x| \leq \frac{3(l_{\epsilon_1}-r_{\epsilon_1})}{8}$, $|t^{(i)}| - |z^{(i)}|   \leq  \frac{(1.5+\eta)l_{\epsilon_1}}{2.5+\eta} + \frac{r_{\epsilon_1}}{2.5+\eta}  + \frac{i}{2.5+\eta} $; if $\frac{3(l_{\epsilon_1}-r_{\epsilon_1})}{8} < |x| \leq l_{\epsilon_1}-r_{\epsilon_1}$, $|t^{(i)}| - |z^{(i)}|   \leq r_{\epsilon_1} + 2i$
(under the condition that $\delta$ is small enough and taking into account that we can take $\epsilon_1$ is as small as needed). Consider $\frac{3(l_{\epsilon_1}-r_{\epsilon_1})}{8} < |x| \leq l_{\epsilon_1}-r_{\epsilon_1}$.


Now,
$r_{\epsilon_1} + 2i < r +2i$, and for $0 \leq \eta <0.5$,
$\frac{(1.5+\eta)l_{\epsilon_1}}{2.5+\eta} + \frac{r_{\epsilon_1}}{2.5+\eta}  + \frac{i}{2.5+\eta} \leq \frac{(1.5+\eta)l}{2.5+\eta} + \frac{r}{2.5+\eta}  + \frac{i}{2.5+\eta}.$

Therefore under the above assumptions on $|x|$, $\delta$ and $\eta$, Lemma \ref{lem0} gives,  for $|x| \leq \frac{3(l_{\epsilon_1}-r_{\epsilon_1})}{8}$,
$$\mathtt{Cl}(uuu^{(1)} x^{(i)})- \mathtt{Cl}(uu u^{(1)} x^{(i)-}) \leq  \frac{(1.5+\eta)l}{2.5+\eta} + \frac{r}{2.5+\eta}  + \frac{i}{2.5+\eta};$$ 
and for  $\frac{3(l_{\epsilon_1}-r_{\epsilon_1})}{8} < |x| \leq l_{\epsilon_1}-r_{\epsilon_1}$,
$$\mathtt{Cl}(uuu^{(1)} x^{(i)})- \mathtt{Cl}(uu u^{(1)} x^{(i)-}) \leq r+2i.$$
Then following the similar steps of Proposition \ref{thdf1}, we can show that the closed-rich constant is at most $\frac{967}{5827}$.

 \end{proof}

\begin{proposition}\label{cont1}
    If an infinite closed-rich word contains long factors with exponent $\alpha$ where $\alpha > 2.5$, then its closed-rich constant is at most $ \frac{8}{49}$.
\end{proposition}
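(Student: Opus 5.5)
The plan is to follow the same template as Propositions \ref{thdf22} and \ref{thdf1}. First I choose a long factor of $v$ whose closed factors I can control, then I extend it by a word whose length is a fixed fraction of its own, and I bound the number of new closed factors via Lemma \ref{lem0}. Fix $\alpha>2.5$ and $\delta>0$. By hypothesis there is, for every $N$, a factor of $v$ of length at least $N$ and exponent at least $\alpha$. Restricting to a suitable prefix, I take $w=u^{\beta}$ of length $n$ with $|u|=l$ primitive and $\beta\ge 2.5$.

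I would not apply Theorem \ref{pth} to the whole of $w$. Instead I use the periodic structure directly: every factor of $w$ of length $m$ occurs in $w$ at positions congruent modulo $l$. Hence $w$ has at most $l$ distinct factors of each length. Moreover, a factor of length $m\ge l$ has its border of length $m-l$ occurring only at positions congruent modulo $l$, by primitivity of $u$ and Lemma \ref{4lk}.

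Concretely, I would look at a factor $w'$ of $v$ of the form $w'=y\,x$. Here $y$ is a factor of $w$ of length $l$, namely a conjugate of $u$, and $x$ is the continuation of length $\xi l$ inside the periodic factor. This is possible with $\xi$ up to $1.5$ because $\beta\ge 2.5$. By Theorem \ref{pth}, $\mathtt{Cl}(y)\le l^2/6+O(l)$. Appending the $i$-th letter of $x$, I apply Lemma \ref{lem0}: the increment is $|t^{(i)}|-|z^{(i)}|$. The key claim is that this quantity is at most $i+c\,l+O(\delta n)$ for a constant $c$ (ideally $c=0$), because of the period-$l$ structure. Indeed, since $y x$ has period $l$, the largest repeated suffix $t^{(i)}$ has length at least $i$ and at most about $i+l$. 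Its own largest repeated suffix $z^{(i)}$ has length at least $|t^{(i)}|-l$, since $t^{(i)}$ has period $l$. For the new closed factors whose longest border lies strictly between these, I would argue as in the case analysis of Proposition \ref{thdf1}: an overlap produces a power of exponent exceeding what is allowed unless it is short. That would cost $O(\delta n)$, except for the genuinely periodic contribution.

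Summing gives
\[\mathtt{Cl}(w')\le \frac{l^2}{6}+c\,\xi l^2+\frac{\xi^2l^2}{2}+O(\delta l^2+l),\]
so that $C_v\le g(\xi)=\frac{1/6+c\xi+\xi^2/2}{(1+\xi)^2}$. Then I minimise over $\xi\in(0,1.5]$. The stationary point is $\xi_0=\frac{1/3-c}{1-c}$, with value $g(\xi_0)=\frac{c+\xi_0}{2(1+\xi_0)}$. For $c=0$ this gives $1/8\le 8/49$, which would already suffice. If only a weaker increment bound is provable, the target $8/49$ tolerates a linear term up to $c\approx 0.259$, the smaller root of $49c^2-32c+5=0$.

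The main obstacle is the increment estimate in the first few steps $i\lesssim l$. There the candidate borders of length between $i$ and $l$ ending at the new position must be shown to create only about $i$ new closed factors. I would first try to prove that every new closed factor ending at position $l+i$ has longest border either of length at most $i$ or equal to a suffix of length $\equiv$ (mod $l$) forced by periodicity. The latter gives at most one extra closed factor per step. If that fails, I would fall back on a crude bound with $c>0$ and check numerically that $\min_\xi g(\xi)\le 8/49$.
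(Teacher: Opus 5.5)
Your proposal breaks down at the step you yourself call the main obstacle, and the problem is more than missing detail: the per-letter claim $|t^{(i)}|-|z^{(i)}|\le i+cl+O(\delta n)$ with $c\le 0.259$ is false.

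Take $y=aabcdeabcde$ (primitive, $l=11$) and continue it periodically, so that $x^{(1)}=a$. In $yx^{(1)}=aabcdeabcdea$ the longest repeated suffix is $t^{(1)}=abcdea$, which also occurs at position $2$, and $z^{(1)}=a$. So by Lemma \ref{lem0}, already at $i=1$ there are $5$ new closed factors: $eabcdea$, $deabcdea$, $cdeabcdea$, $bcdeabcdea$ and $abcdeabcdea$. More generally, for $y=a\,(aw)(aw)$ with $w$ made of distinct letters other than $a$, the increment at $i=1$ is $(l-1)/2$. Hence any uniform bound of the form $i+cl$ needs $c\ge 1/2-o(1)$.

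The intended rescue via forbidden powers is not available. Proposition \ref{cont1} assumes nothing about the absence of long powers, so your $\delta$ has nothing to refer to. In any case, the overlap produced here, $abcdeabcdea$, has exponent $11/5<2.5$. What periodicity actually gives is the bound you derived yourself, $|t^{(i)}|-|z^{(i)}|\le l$. With $c=1/2$ your function $g(\xi)=\frac{1/6+\xi/2+\xi^2/2}{(1+\xi)^2}$ is increasing, since the sign of its derivative is $\frac16+\frac{\xi}{2}>0$. So the best you obtain is $1/6$.

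The underlying obstruction is structural. Your $w'$ has length $L=(1+\xi)l\le 2.5l<3l$. Inside such a factor, a per-letter cost of order $l$ can never improve on Theorem \ref{pth}: for any split point $L_0$ we have $L_0^2/6+(L-L_0)l\ge L^2/6+(L-L_0)^2/6$, because $l>L/3$. In other words, the period is more expensive than the marginal rate $L/3$ already built into Theorem \ref{pth}. Salvaging your route would require a genuinely new global estimate for closed factors of periodic words, and none is sketched.

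The missing idea, which is what the paper does, is to leave the periodic run.
\begin{itemize}
\item Take the power maximal: $u^{(1)}uu$ with period $k$ and exponent $\alpha$, followed by $x$ with $|x|=k$ whose first letter $b$ differs from the first letter $a$ of $u$.
\item This mismatch forces $|t^{(i)}|\le k+i-1$.
\item Hence $|t^{(i)}|-|z^{(i)}|\le k$ for all $i\le k$: either the second occurrence of $t^{(i)}$ lies inside the run, so $t^{(i)}$ has period $k$, or the two occurrences overlap with shift less than $k$.
\item The whole factor has length $n=k_1+3k$, so its prefix of length $n-k/2=k_1+2.5k$ is at least $3k$, exactly because $\alpha\ge 2.5$. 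That is where the marginal rate of Theorem \ref{pth} reaches $k$.
\item Apply Theorem \ref{pth} to that prefix and pay $k$ per letter for the last $k/2$ letters. This gives $\mathtt{Cl}(w)/n^2\le \frac16-\frac{\beta}{6}+\frac{13\beta^2}{24}+o(1)$ with $\beta=\frac{1}{\alpha+1}\in[\frac14,\frac27)$, using cubes when $\alpha>3$, which is below $\frac{8}{49}$.
\end{itemize}
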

\begin{proof} 
Let $v$ be an infinite closed-rich word such that it has long factors with exponent $\alpha$ where $\alpha > 2.5$. We distinguish between two cases depending on $\alpha$.
\begin{itemize}
    \item \underline{Case $2.5 < \alpha \leq 3$}. Let  $w=u^{(1)} uu x$ be a factor of $v$  of length $n$ such that 
    $a \in Pref(u)$, $b \in Pref(x)$,  $a,b \in \Sigma$, $a \neq b$,   $u^{(1)} \in \Sigma^+ \cap Suff(u)$, $|u|=|x|=k$, $|u^{(1)}|=k_1 $ and $exp(u^{(1)}uu) = \alpha$, i.e., $\frac{2k+k_1}{k} = \alpha$.\\
Let $x^{(i)}$ be a prefix of $x$ of length $i$, $t^{(i)}$ be the largest repeated suffix of $u^{(1)}uux^{(i)}$ and $z^{(i)}$ be the largest repeated suffix of $t^{(i)}$. 
If $t^{(i)} = \lambda$, then $\mathtt{Cl}(u^{(1)}uu x^{(i)})- \mathtt{Cl}(u^{(1)}uu x^{(i)-}) = 1 \leq k$. Now, consider $t^{(i)} \in \Sigma^+$.
 Let us denote the rightmost $t^{(i)}$ in $u^{(1)}uux^{(i)}$ by $t^{(i1)}$ and the rightmost $t^{(i)}$ in $u^{(1)}uux^{(i)-}$ by $t^{(i2)}$.
 
We first show that $|t^{(i)}| \leq k+i-1$. Assume the converse: suppose that $|t^{(i)}| \geq k+i$. Since $|u|=k$, we have $t^{(i1)} = t' u x^{(i)}$ for some $t' \in \Sigma^*$. As $b \in Pref(x^{(i)})$, $t' u b$ is a prefix of $t^{(i1)}$, i.e., $t' u b \in Pref(t^{(i2)})$. Then as $t^{(i2)}$ is a factor of $u^{(1)}uux^{(i)-}$, $t' u b$ is a factor of $u^{(1)}uu$. Since $a \in Pref(u)$, $|u b|=k+1$ and $k$ is the period of $u^{(1)} uu$, then $t' u b \in Fac(u^{(1)}uu)$ implies $a=b$, which is a contradiction. Thus, $|t^{(i)}| \leq k+i-1$. 
 

Now, if $|t^{(i)}|\leq k$, then $|t^{(i)}|-|z^{(i)}| \leq k$. We will use this bound for $|t^{(i)}|-|z^{(i)}|$.
Consider now the case $k < |t^{(i)}| \leq k+i-1$.
If $t^{(i2)}$ lies in $u^{(1)}uu$, then as period of $u^{(1)}uu$ is $k$, $|z^{(i)}| \geq |t^{(i)}|-k$. This implies $|t^{(i)}|-|z^{(i)}| \leq k$. 
If $t^{(i2)}$ does not lie in $u^{(1)}uu$ but lies in $uux^{(i)}$ (see Fig. \ref{yu1}), then as $k < |t^{(i)}| \leq k+i-1$,
$t^{(i2)}$ has a non-empty suffix which is also a prefix of $x^{(i)}$. Also, $k < |t^{(i)}| \leq k+i-1$ implies 
$t^{(i1)}$ has a non-empty prefix which is also a suffix of second occurrence of $u$. Thus,  
$t^{(i1)}$ and $t^{(i2)}$ intersect, i.e., $t^{(i2)}=s^{(3)} s^{(1)}$ and $t^{(i1)}=s^{(1)}s^{(2)}$ for some $s^{(1)}, s^{(2)}, s^{(3)} \in \Sigma^+$ with $|s^{(2)}| < i \leq k$.
Then by Lemma \ref{4lk}, $s^{(3)}=pq$, $s^{(1)}=(pq)^jp$ and $s^{(2)}=qp$ for some $p \in \Sigma^+, q \in \Sigma^*$, $j \geq 0$.
This implies $t^{(i)}=(pq)^{j+1}p$  and $|pq| < k$. Thus, $|z^{(i)}| \geq (pq)^{j}p$. This implies $|t^{(i)}|-|z^{(i)}| \leq |pq| < k$.
Therefore using Lemma \ref{lem0}, for each $1\leq i \leq k$, $\mathtt{Cl}(u^{(1)}uu x^{(i)})- \mathtt{Cl}(u^{(1)}uu x^{(i)-}) \leq k$.
Then using Theorem \ref{pth} for the prefix of $w$ of length $(n-\frac{k}{2})$ 
and Lemma  \ref{lem0} for each $\frac{k}{2}+1\leq i \leq k$, we get the following upper bound:

\begin{align*}
    \mathtt{Cl}(w)  
                     &\leq \frac{(n-\frac{k}{2})^2}{6} + \frac{7}{6}(n-\frac{k}{2}) +1 + \sum_{i=\frac{k}{2}+1}^{k}k\\  
                    & = \frac{n^2}{6} +\frac{13k^2}{24} -\frac{nk}{6} +1 + \frac{7}{6}\left(n-\frac{k}{2}\right).
\end{align*}


Now, $\frac{2k+k_1}{k} = \alpha$ implies $\frac{n-k}{k}=\alpha$, i.e., $\frac{k}{n}=\frac{1}{\alpha+1}$. For convenience, we let denote $\beta = \frac{1}{\alpha+1}$.

Then, $$ \frac{\mathtt{Cl}(w)}{n^2} \leq  \frac{1}{6} +\frac{13 \beta^2}{24} -\frac{\beta}{6} +\frac{1}{n^2} + \frac{7}{6}\left(\frac{1}{n}-\frac{\beta}{2n}\right).$$

Thus, 
\begin{align*}
     & \inf \left\{ \frac{\mathtt{Cl}(w)}{n^2} ~: w \in \Sigma^n \cap Fac(v) \text{ is in the form } u^{(1)}uux \text{ with } \frac{2|u|+|u^{(1)}|}{|u|}=\alpha, |u|=|x| \right\}\\
     & \leq  \frac{1}{6} +\frac{13 \beta^2}{24} -\frac{\beta}{6}.
\end{align*} 
This implies by Definition \ref{eq:C_u} that for the closed-rich constant $C_v$ of $v$, we have $C_v\leq \frac{1}{6} +\frac{13 \beta^2}{24} -\frac{\beta}{6}$. 
Let $f(\beta) = \frac{1}{6} + \frac{13}{24} \beta^2 - \frac{1}{6} \beta$. Now, as $2.5< \alpha \leq 3$, we have $\frac{1}{4} \leq \beta < \frac{2}{7}$. On this interval, the function $f(\beta)$ is an increasing function, i.e., $f(\beta) < \frac{8}{49}$ for all $\frac{1}{4} \leq \beta < \frac{2}{7}$.
Thus, $C_v < \frac{8}{49}$.

\begin{figure}[h]
\centering
\begin{tikzpicture}
\draw (.55,0) arc[start angle=0, end angle=120, radius=1] node[midway, above] {$u^{(1)}, k_1$};
\draw (2.55,0) arc[start angle=0, end angle=180, radius=1] node[midway, above] {$u, k$};
\draw (4.55,0) arc[start angle=0, end angle=180, radius=1] node[midway, above] {$u, k$};
\draw (6.55,0) arc[start angle=0, end angle=180, radius=1] node[midway, above] {$x, k$};
\draw (6,0) arc[start angle=0, end angle=-180, radius=.9] node[midway, below] {$t^{(i1)}$};
\draw (5.2,0) arc[start angle=0, end angle=-180, radius=.9] node[midway, below] {$t^{(i2)}$};
\draw[-] (-1,0) -- (6.6,0);
\draw[<->] (4.6,-0.15) -- (6,-0.15) node[midway, below] {$x^{(i)}$};
\draw[-|] (4.6,0) -- (6,0);
 \draw[<->] (4.2,0.15) -- (5.16,0.15) node[midway, above] {$s^{(1)}$};
 \draw[<->] (5.2,0.15) -- (6,0.15) node[midway, above] {$s^{(2)}$};
  \draw[<->] (3.45,0.15) -- (4.1,0.15) node[midway, above] {$s^{(3)}$};
\end{tikzpicture}
\caption{Illustration of the proof of Proposition \ref{cont1} for $2.5 < \alpha \leq 3$.} 
\label{yu1}
\end{figure}

\item  \underline{Case $ \alpha > 3$}. Then $v$ always has long cubes. Following the previous case, we have $\beta= \frac{1}{4}$ and  $C_v \leq f(\frac{1}{4}) < \frac{8}{49}$.
\end{itemize}
\end{proof}

\begin{remark}
We remark that the bounds from Propositions \ref{thdf33} and \ref{cont1} can be improved slightly by modifying lengths of $x$ (and also length of prefix for applying 
Theorem \ref{pth} in  Propositions \ref{cont1}) and taking maxmin, as we did in Propositions  \ref{thdf22} and \ref{thdf1}. However, the bound from Proposition \ref{thdf1}, which is the most technical, is the worst one, so imrpoving bounds from other propositions does not affect the global bound; thus we do not do it to reduce technicalities. 
\end{remark}

\begin{proof}[Proof of Theorem \ref{mthsd}]
Since the asymptotic critical exponent of an infinite word over a finite alphabet is always exists and greater than or equal to one (\cite{opovcenska2023asymptotic}), then the proof of the theorem is given by combining Propositions \ref{thdf33}, \ref{thdf331}, \ref{thdf22}, \ref{thdf221},  \ref{thdf1}, \ref{qa11} and \ref{cont1}.
\end{proof}


The following example shows that there exist closed-rich words with arbitrary small closed-rich constants.

\begin{example} \label{exam1201}
 We provide an example showing that for any $\beta> 0$, there always exists an infinite closed-rich word with closed-rich constant smaller than $\beta$. Let $u$ be an infinite closed-rich word over $\Sigma$. Consider a positive integer $m$ such that $\frac{m+1}{m^2}<\beta$ (clearly, such $m$ always exists).
  Let $d \notin \ALPH(u)$ and consider an infinite word $v=d^m u$ over $\Sigma_0 = \Sigma \cup \{ d \}$. The word $v$ is closed-rich, since adding a finite prefix to an infinite word does not affect the closed-rich property. 
     Since $d^m$ has exactly $m+1$ distinct closed factors, by the definition of closed-rich constant, we have $C_v \leq \frac{m+1}{m^2} < \beta$.
\end{example}

Note that in the previous example, factors with small number of closed factors are all of bounded length and in addition occur only a finite number of times in the word. However, there exists uniformly recurrent closed-rich words with arbitrary small closed-rich constants, such that they containing arbitrarily long factor with small number of closed-rich factors:

\begin{example} \label{exam1202}
Corollary 2 from \cite{parshina2024finite} states that Sturmian words with bounded directive sequences of their slopes are closed-rich. Sturmian words with infinitely many numbers greater than $k$ in their directive sequence contain $k$-powers of unbounded period (see, e.g., Chapter 2 in \cite{lothaire}). Now, in the proof of Proposition 5 in \cite{parshina2024finite} the following fact has been explicitly shown: a word of length $n$ and of exponent $k$ contains at most $\frac{n^2}{k}$ distinct closed factors. Choosing $k\geq \frac{1}{\beta}$, one gets the claim.
\end{example}

\section{Closed-rich properties of the Fibonacci word} \label{sec4}

The Fibonacci word $f$ is an infinite closed-rich word as the directive sequence of its slope is bounded \cite{parshina2024finite}.
In this section, we discuss the occurrences of closed factors in any prefix and factor of $f$, and the bounds for its closed-rich constant $C_f$.

\subsection{Numbers of closed factors in prefixes of the Fibonacci word}

In this subsection, we calculate the number of closed factors in any prefix of the Fibonacci word $f$. The main results of the subsection are given by Theorem \ref{Prethecl} giving an explicit formula for the sequence of numbers of distinct closed factors in a prefix of length $n$ of the Fibonacci word, and Theorem \ref{prethe1} providing a formula for its first difference sequence.


Let $(f_n)_{n\geq -1}$ denote the sequence of \textit{Fibonacci words}, where
$f_{-1}=b$, $f_0=a$, and $f_n=f_{n-1}f_{n-2}$ for $n \geq 1$.
The word $f_n$ is called the $n$-th Fibonacci word, and the limit
$\displaystyle f = \lim_{n\rightarrow \infty} f_n$ is the \textit{infinite Fibonacci word}.
We let denote $F_n = |f_n|$; it is straightforward that
the number $F_n$ is the $n$-th Fibonacci number.

In the following lemma we provide some well known properties of the Fibonacci sequence. 

\begin{lemma}\label{malem}
The following properties of the Fibonacci sequence hold true:
    \begin{enumerate} 
        \item  $\displaystyle \sum_{i=-1}^{n-2} {F_i} = F_{n}-1 $.\label{first}
        \item  $\displaystyle \sum_{i=-1}^{n-2} {F_i}^2 = F_{n-2}F_{n-1} $.\label{first2}
        \item   $\displaystyle \lim_{n \to \infty} \frac{F_{n+1}}{F_n} = \phi$
                 where $\phi = \frac{1+\sqrt{5}}{2}$.\label{goldy}
        \item  Finite Fibonacci words are primitive. 
        \item \label{malem:5} If $n \geq 1$ is  even, then $f_n$ ends with $ba$. If  $n \geq 1$  is odd, then $f_n$ ends with $ab$. 
    \end{enumerate}
\end{lemma}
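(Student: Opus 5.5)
All five items are elementary facts about the recursion $f_n=f_{n-1}f_{n-2}$, so I would prove each one directly by induction on $n$. Throughout I would use the indexing fixed above, namely $F_{-1}=|b|=1$, $F_0=|a|=1$ and $F_n=F_{n-1}+F_{n-2}$ for $n\ge 1$ (so $F_1=2$, $F_2=3$, \dots). Where convenient I would also use the convention $F_{-2}=0$, which is consistent with the recursion.

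\emph{Items (\ref{first}) and (\ref{first2}).} Both follow by induction on $n\ge 1$.
\begin{itemize}
\item For (\ref{first}), the base case $n=1$ reads $F_{-1}=1=F_1-1$. For the step, adding $F_{n-1}$ to both sides gives $F_n-1+F_{n-1}=F_{n+1}-1$.
\item For (\ref{first2}), the base case $n=1$ reads $F_{-1}^2=1=F_{-1}F_0$. For the step, adding $F_{n-1}^2$ gives $F_{n-2}F_{n-1}+F_{n-1}^2=F_{n-1}(F_{n-2}+F_{n-1})=F_{n-1}F_n$, which is the claim for $n+1$.
\end{itemize}

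\emph{Item (\ref{goldy}).} I would set $r_n=F_{n+1}/F_n$. The recursion gives $r_n=1+1/r_{n-1}$, and all $r_n$ lie in $[1,2]$. The map $g(x)=1+1/x$ has derivative of absolute value $1/x^2$. The only delicate point is that $r_{-1}=1$ lies where $|g'|$ is not below $1$. To handle this, I would note that $r_n\ge 3/2$ for $n\ge 1$, so $g$ is a contraction with constant $4/9$ on the relevant range. Hence $r_n$ converges to the unique fixed point of $g$ in $[1,2]$, which is the positive root of $x^2=x+1$, namely $\phi$. Alternatively, one can cite Binet's formula $F_n=(\phi^{n+2}-\psi^{n+2})/\sqrt5$ with $|\psi|<1$.

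\emph{Primitivity.} Induction shows $|f_n|_a=F_{n-1}$ and $|f_n|_b=F_{n-2}$, using $F_{-2}=0$ for $f_0=a$.
\begin{itemize}
\item Suppose $f_n=w^k$ with $k\ge 2$. Then $k$ divides both letter counts, so $k$ divides $\gcd(F_{n-1},F_{n-2})$.
\item This gcd equals $1$, because $\gcd(F_m,F_{m-1})=\gcd(F_{m-1},F_{m-2})=\dots=\gcd(F_0,F_{-1})=1$ by the Euclidean step applied to the recursion.
\item This contradicts $k\ge 2$, so each $f_n$ is primitive.
\end{itemize}
For $f_{-1}=b$ and $f_0=a$ primitivity is trivial.

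\emph{Item (\ref{malem:5}).} I would use strong induction with base cases $f_1=ab$ (odd $n$, ends with $ab$) and $f_2=aba$ (even $n$, ends with $ba$). For $n\ge3$, the word $f_n=f_{n-1}f_{n-2}$ has $f_{n-2}$ as a suffix, and $|f_{n-2}|\ge 2$. Therefore the last two letters of $f_n$ coincide with those of $f_{n-2}$, which has the same parity as $f_n$.

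The only step needing any care is the contraction estimate in (\ref{goldy}), and even that can be bypassed by citing Binet's formula. Everything else is routine.
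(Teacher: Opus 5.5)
Your proposal is correct. With the paper's indexing $F_{-1}=F_0=1$ (and $F_{-2}=0$), every part checks out: the two sum identities, the letter-count/$\gcd$ argument for primitivity, the parity induction through the suffix $f_{n-2}$ (which has length at least $2$ for $n\ge 3$), and the contraction estimate on $[3/2,2]$, where $\phi$ also lies.

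The paper gives no proof of this lemma; it simply calls these well-known properties. It does later use the Binet form $F_n=\frac{\phi^{n+2}-(1-\phi)^{n+2}}{\sqrt5}$, which is your alternative route to item 3. So there is nothing to compare against: your write-up supplies the routine details the paper omits.
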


For $n\geq 1$, denote $g_n=f_{n-2}f_{n-1}$. Here is a relation between $f_n$ and $g_n$:
\begin{lemma}\label{pre1}\cite{saari2007periods}  
For $n\geq 2$, we have
$f_n f_{n-1}=f_{n-1} g_n \text{ and } f_{n-1}f_n = f_n g_{n-1}.$
Furthermore, for all $n\geq 1$, the words $f_n$ and $g_n$ differ only by the last two letters.
\end{lemma}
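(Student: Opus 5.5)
The plan is to prove the two identities by direct substitution of the recursion $f_m = f_{m-1}f_{m-2}$, and to prove the last statement by induction on $n$, using the identities themselves for the inductive step.

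\textbf{The identities.} For the first identity, fix $n\ge 2$ and expand $f_n$ once:
\[
f_nf_{n-1}=f_{n-1}f_{n-2}f_{n-1}=f_{n-1}g_n,
\]
which is just the definition $g_n=f_{n-2}f_{n-1}$. For the second identity, expand $f_n=f_{n-1}f_{n-2}$ and then the middle factor $f_{n-1}=f_{n-2}f_{n-3}$:
\[
f_{n-1}f_n=f_{n-1}\,f_{n-1}f_{n-2}=f_{n-1}\,f_{n-2}f_{n-3}f_{n-2}.
\]
Regrouping gives $(f_{n-1}f_{n-2})(f_{n-3}f_{n-2})=f_ng_{n-1}$. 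This needs $f_{n-3}$ to be defined, which holds for $n\ge 2$ because the sequence starts at $f_{-1}=b$. I will double-check the boundary case $n=2$ by hand: $f_1f_2=ab\cdot aba$ and $f_2g_1=aba\cdot ba$, both equal to $ababa$.

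\textbf{The claim on the last two letters.} I will prove the slightly stronger statement that $f_n$ and $g_n$ have the same length and the same prefix of length $|f_n|-2$, and their last two letters are $xy$ and $yx$ respectively with $x\neq y$.

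The base cases are $n=1$ and $n=2$. For $n=1$, $f_1=ab$ and $g_1=f_{-1}f_0=ba$. For $n=2$, $f_2=aba$ and $g_2=f_0f_1=aab$. Both satisfy the claim.

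For the inductive step, assume the claim for $n$ with $n\ge 2$. The first identity shows that $f_{n+1}=f_nf_{n-1}=f_{n-1}g_n$. By definition, $g_{n+1}=f_{n-1}f_n$. So $f_{n+1}$ and $g_{n+1}$ share the common prefix $f_{n-1}$, followed by $g_n$ and $f_n$ respectively. By the induction hypothesis these two words differ exactly by a swap of their last two distinct letters, which completes the induction. As a consistency check, this matches Lemma~\ref{malem}(\ref{malem:5}): $f_n$ ends in $ab$ or $ba$ according to the parity of $n$, and $g_n$ ends in the reversed pair.

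\textbf{Main obstacle.} The only step needing care is noticing that the inductive step reduces to the first identity, i.e.\ that $f_{n+1}=f_{n-1}g_n$. This lets the comparison of $f_{n+1}$ with $g_{n+1}$ reduce to the comparison of $g_n$ with $f_n$ after the common prefix $f_{n-1}$. The remaining concern is index bookkeeping at small $n$, which the explicit base cases cover.
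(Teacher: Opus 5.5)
Your proof is correct and complete. Both identities follow from one or two applications of $f_m=f_{m-1}f_{m-2}$, which is valid down to $n=2$ since $f_{-1}$ is defined, and the induction comparing $f_{n+1}=f_{n-1}g_n$ with $g_{n+1}=f_{n-1}f_n$ correctly transfers the swapped, distinct last two letters from the pair $(g_n,f_n)$ to the pair $(f_{n+1},g_{n+1})$. The paper gives no proof of this lemma and only cites Saari, so your self-contained derivation supplies the routine verification that the citation stands in for.
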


We will make use of the following properties of finite Fibonacci words. We note that some of these properties are true for general standard words (see e.g. \cite[Chapter 2]{lothaire}); however, we provide the proofs for the sake of completeness. Here we denote the prefixes of length $n-1$ and $n-2$ of a word $w$ of length $n$ by $w^{-}$ and $w^{--}$, respectively.

\begin{lemma}\label{lem11}
The following properties of finite Fibonacci words hold true:
    \begin{enumerate}
        \item For $n \geq 2$, $aab$ and $aba$ are the only possible suffixes of length $3$ of $f_n$. \label{impt1}
        \item For $n \geq 0$, any non-empty suffix of $f_{n-1}$ is not a suffix of $f_n$. \label{impt2}
        \item For $n \geq 0 $, $f_{n-1}$ does not occur internally in $f_n$. \label{impt3}
        \item For $n \geq 0 $, $f_{n}$ does not occur internally in $f_{n-1} f_{n}$.\label{impt4}
        
        \item For $n \geq 1$, $f_{n}^{-}$ does not occur internally in $f_{n-1} f_{n}^{-}$ {and $f_n f_n ^{-}$}. \label{impt6}
        
        \item  
        {For $n \geq 3$, $f_{n-1}$ is not a proper suffix of $f_{n}^{-}$.}\label{impt7} 
    \end{enumerate} 
\end{lemma}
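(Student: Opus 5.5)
I will prove the six items essentially in order, by induction on $n$, using only the recursion $f_n=f_{n-1}f_{n-2}$, Lemma \ref{pre1} (that $f_nf_{n-1}=f_{n-1}g_n$ and that $f_n$, $g_n$ differ only in the last two letters), Lemma \ref{malem}(\ref{malem:5}) on the last two letters, and Lemma \ref{4lk}.

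\textbf{Items (\ref{impt1}) and (\ref{impt2}).} For (\ref{impt1}) I induct on $n$. The base cases are $f_2=aba$ and $f_3=abaab$. For larger $n$, $f_n$ ends with $f_{n-2}$, whose length is at least $3$, so its length-3 suffix is that of $f_{n-2}$. For (\ref{impt2}), the last letters of $f_{n-1}$ and $f_n$ differ: by Lemma \ref{malem}(\ref{malem:5}) they alternate between $a$ and $b$ according to parity, and small $n$ are checked directly. Since the last letters differ, no non-empty common suffix exists.

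\textbf{Items (\ref{impt3}) and (\ref{impt4}).} For (\ref{impt3}), suppose $f_n=xf_{n-1}y$ with $x,y$ non-empty. Writing $f_n=f_{n-1}f_{n-2}$ shows that $f_{n-1}$ has a period $|x|<F_{n-2}\le |f_{n-1}|$. I then combine this with the border structure $f_{n-1}=f_{n-2}f_{n-3}$. The cleanest route is conjugation: $f_{n-1}f_{n-2}$ and $f_{n-2}f_{n-1}$ are conjugates differing only in their last two letters (Lemma \ref{pre1}). An internal occurrence would make $f_{n-1}$ equal to a nontrivial rotation of a prefix pattern, and via Lemma \ref{4lk} this forces $f_{n-1}$ or $f_{n-2}$ to be non-primitive, contradicting Lemma \ref{malem}(4).

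Item (\ref{impt4}) is similar. An internal occurrence of $f_n$ in $f_{n-1}f_n$ gives $f_{n-1}f_n=xf_ny$ with $0<|x|<F_{n-1}$. Comparing with $f_{n-1}f_n=f_ng_{n-1}$ yields $xf_n$ as a prefix of $f_ng_{n-1}$, so $f_n$ has a period $|x|$. Applying Lemma \ref{4lk} to the overlapping occurrences gives $f_{n-1}=(pq)^jp$ type structure with $|x|$ dividing-type relations. Together with the primitivity of $f_{n-1}$ and the letter mismatch at the last two positions (Lemma \ref{pre1}), this yields a contradiction.

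\textbf{Items (\ref{impt6}) and (\ref{impt7}).} Item (\ref{impt6}) I derive from (\ref{impt4}) and its analogue for $f_nf_n$ by extending the occurrence one letter to the right. An internal occurrence of $f_n^-$ lies at a position where the next letter exists. I will argue that the letter following it must be the last letter of $f_n$, using the two-letter difference between $f_n$ and $g_n$ and the fact that $f_n^{--}$ is a palindrome (a standard fact, which I would prove by a quick induction if needed). This upgrades the occurrence to one of $f_n$, contradicting (\ref{impt4}), or (\ref{impt3}) in the $f_nf_n^-$ case. For (\ref{impt7}), if $f_{n-1}$ were a suffix of $f_n^-=f_{n-1}f_{n-2}^-$, then since $|f_{n-1}|>|f_{n-2}^-|$, $f_{n-1}$ would occur internally or overlap $f_{n-1}f_{n-2}$ with a shift smaller than $F_{n-2}$. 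That would make $f_{n-1}$ have a small period, which I rule out as in (\ref{impt3}).

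\textbf{Main obstacle.} The hard part will be (\ref{impt4}) and (\ref{impt6}): pinning down exactly where the internal occurrence can sit and extracting the contradiction from the last-two-letter mismatch without a long case analysis. I expect to try induction on $n$ first, with the statement strengthened to cover $f_n$, $g_n$ and their prefixes simultaneously.
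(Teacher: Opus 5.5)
\textbf{Gap in items (\ref{impt3}) and (\ref{impt4}).} Your arguments for (\ref{impt1}), (\ref{impt2}) and (\ref{impt7}) are fine and match the paper. For (\ref{impt7}), the simplest route is that $f_n^-=uf_{n-1}$ with $u\neq\lambda$ gives $f_n=uf_{n-1}x$, an internal occurrence already excluded by (\ref{impt3}).

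The genuine gap is in (\ref{impt3}) and (\ref{impt4}), on which everything else rests. You never state the fact that makes them work, and the period route you sketch for (\ref{impt4}) is not an argument. The missing observation is this: for $n\ge 2$, $f_{n-2}$ is a prefix of $f_{n-1}$. Hence $f_n=f_{n-1}f_{n-2}$ is a prefix of $f_{n-1}f_{n-1}$, and $f_{n-1}f_n=f_{n-1}f_{n-1}f_{n-2}$. So an internal occurrence of $f_{n-1}$ in $f_n$ is an internal occurrence of $f_{n-1}$ in $f_{n-1}f_{n-1}$. The same holds for an internal occurrence of $f_n$ in $f_{n-1}f_n$ at a shift $0<|x|<F_{n-1}$, because its prefix $f_{n-1}$ sits at the same shift. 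Such an occurrence gives $f_{n-1}=xy=yx$ with $x,y\neq\lambda$, contradicting primitivity by Lemma \ref{4lk}. That is the paper's entire proof of both items; small $n$ are checked by hand.

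In (\ref{impt4}) you were one step away. From $xf_n\in Pref(f_ng_{n-1})$ and $f_ng_{n-1}=f_{n-1}f_{n-1}f_{n-2}$ you get $xf_{n-1}\in Pref(f_{n-1}f_{n-1})$ directly. Instead you pass to ``$f_n$ has period $|x|$'', which is not an immediate contradiction. It needs $F_{n-1}$ to be the least period of $f_n$, which is essentially what is being proved. Fine--Wilf with the period $F_{n-1}$ applies only when $|x|-\gcd(|x|,F_{n-1})\le F_{n-2}$, which fails for larger shifts. The ``$(pq)^jp$ type structure'' and the last-two-letter mismatch do not fill this. Likewise, in (\ref{impt3}) the fact you cite, that $f_{n-1}f_{n-2}$ and $f_{n-2}f_{n-1}$ differ in two letters, is not what turns the occurrence into a rotation; the prefix-of-a-square fact is. So the part you flag as the main obstacle is a two-line consequence of primitivity, and no strengthened induction is needed.

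\textbf{Item (\ref{impt6}).} The key step of your plan, that the letter following an internal occurrence of $f_n^-$ must be the last letter of $f_n$, is only announced; the palindrome/$g_n$ hint is not carried out. You are right that it needs justification. The paper's own proof of (\ref{impt6}) passes from $f_{n-1}f_n^-=uf_n^-v$ to $uf_n\in Pref(f_{n-1}f_n)$ without checking it. However, the step can be bypassed entirely:
\begin{itemize}
\item An internal occurrence of $f_n^-$ in $f_{n-1}f_n^-$ at shift $1\le p\le F_{n-1}-1$ carries the prefix $f_{n-1}$ of $f_n^-$ to an internal occurrence in $f_{n-1}f_{n-1}$, exactly as above.
\item An internal occurrence of $f_n^-$ in $f_nf_n^-$ at shift $p$ means the length-$F_n$ factor of $f_nf_n$ at shift $p$ agrees with $f_n$ except possibly in its last letter. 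That factor is a nontrivial conjugate of $f_n$, so it has the same letter counts and therefore equals $f_n$. This contradicts primitivity of $f_n$.
\end{itemize}
In this second case the contradiction comes from primitivity of $f_n$, not from (\ref{impt3}) as you wrote.
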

\begin{proof}
    \begin{enumerate}
        \item Straightforward.
        \item Follows from Lemma \ref{malem}, Assertion (\ref{malem:5}).  
        \item For $n=0, 1, 2$ the proof is direct. For $n \geq 3$, $f_n f_n = f_{n-1} f_{n-2} f_{n-1} f_{n-2} = f_{n-1} f_{n-1} g_{n-2} f_{n-2}$.  Since $f_{n-1}$ is primitive, $f_{n-1}$ does not occur internally in $f_{n-1} f_{n-1}$. As $f_n$ is a proper prefix of $f_{n-1} f_{n-1}$, we have that $f_{n-1}$ does not occur internally in $f_n$.

        \item For $n=0, 1, 2$ the proof is direct. For $n \geq 3$, $f_{n-1} f_n = f_{n-1} f_{n-1} f_{n-2}$. If $f_n$ occurs internally in $f_{n-1} f_n$, then $f_{n-1}$ must occur internally in $f_{n-1} f_{n-1}$, which is not possible as $f_{n-1}$ is primitive.        


        \item For $n= 1$ the proof is direct.
        Suppose that $f_{n}^{-}$ occurs internally in $f_{n-1} f_{n}^{-}$ for $n \geq 2$. Then for some $u, v \in \Sigma^+$, $f_{n-1} f_{n}^{-} = u f_{n}^{-} v $. This implies $f_{n-1} f_{n}^{-} v^{-1} x = u f_{n} $ for some $x \in \Sigma$. Since $v \in \Sigma^+$, $f_n$ occurs internally in $f_{n-1} f_n$, which is a contradiction by Assertion \ref{impt4}. The proof for the other statement is symmetric.


        
        \item  
        Let  $f_{n-1} \in Suff(f_{n}^{-})$, where $n \geq 3$. Then, $f_n^{-} = u f_{n-1}$ for some $u \in \Sigma^+$. This implies  $f_n = u f_{n-1} x$ for some $x \in \Sigma$, which is a contradiction with Assertion (\ref{impt3}) of this Lemma.
    \end{enumerate}
\end{proof}

\begin{lemma}\label{newlemcombine}
     Let $w$ be a prefix of $f$ with $F_{n-1}+1\leq |w|\leq F_n$, i.e., $w=f_{n-1}\alpha$ for some $\alpha\in Pref (f_{n-2})$ with $1 \leq  |\alpha| \leq F_{n-2}$.
    \begin{itemize}
        \item  If $1 \leq  |\alpha| \leq F_{n-2}-2$, then $f_{n-3} \alpha$ is the largest repeated suffix of $w$ for $n \geq 4$.

        \item If $|\alpha| = F_{n-2}-1$ or $|\alpha| = F_{n-2}$, then $ \alpha$ is the largest repeated suffix of $w$ for $n \geq 3$.



    \end{itemize}
    
\end{lemma}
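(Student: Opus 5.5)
We will prove both items in two steps. First we show that the proposed suffix occurs at least twice in $w$. Then we show that every longer suffix of $w$ occurs only once. Throughout we use the decompositions $f_{n-1}=f_{n-2}f_{n-3}$ and $f_n=f_{n-1}f_{n-2}$, Lemma \ref{pre1}, and Lemma \ref{lem11}.

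\textbf{First item: $f_{n-3}\alpha$ is repeated.} Since $f_{n-2}=f_{n-3}f_{n-4}$, the word $w=f_{n-1}\alpha=f_{n-2}f_{n-3}\alpha$ has the prefix $f_{n-3}f_{n-4}f_{n-3}$. By Lemma \ref{pre1}, $f_{n-4}f_{n-3}=g_{n-2}$, and $g_{n-2}$ agrees with $f_{n-2}$ except in the last two letters. Hence $f_{n-3}\alpha$ is a prefix of $f_{n-3}f_{n-4}f_{n-3}\cdots$ as long as $\alpha$ avoids the last two letters of $f_{n-2}$. This is exactly the condition $|\alpha|\le F_{n-2}-2$. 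Concretely, $f_{n-3}\alpha$ and $f_{n-2}^{--}$ agree in the relevant range, so $f_{n-3}\alpha$ occurs as a prefix of $w$, and it also occurs as a suffix of $w$. These two occurrences are distinct because $|f_{n-3}\alpha|<|w|$.

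\textbf{First item: maximality.} Suppose the suffix $s=xf_{n-3}\alpha$ of $w$, with $x$ a letter, occurred again in $w$. Then the suffix $xf_{n-3}$ of $f_{n-1}$ would occur at another position. Because $\alpha$ is a prefix of $f_{n-2}$, any such occurrence must come from a shift of period $F_{n-2}$ or from an internal occurrence. Shift $F_{n-2}$ is ruled out: the letter preceding $f_{n-3}$ at the start of $w$ does not exist, or, in the case of an occurrence beginning later, it contradicts Lemma \ref{lem11}(\ref{impt2}), since suffixes of $f_{n-3}$ and of $f_{n-2}$ differ in their last two letters. Internal occurrences at other shifts are ruled out by Lemma \ref{lem11}(\ref{impt3}), (\ref{impt4}) and (\ref{impt6}): $f_{n-2}$ does not occur internally in $f_{n-3}f_{n-2}$, and similar statements hold for the truncated words. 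The precise formulation is as follows. Any other occurrence of $xf_{n-3}\alpha$ contains a copy of $f_{n-3}$, or of $f_{n-2}^{-}$ when $\alpha$ is long, at a position that would force $f_{n-2}$ or $f_{n-2}^{-}$ to occur internally in $f_{n-1}f_{n-2}^{-}$, which is impossible. I expect this maximality step to be the main obstacle. It needs a careful case split on the starting position of the hypothetical second occurrence, namely whether it starts inside the first $f_{n-2}$ or inside $f_{n-3}$. It also needs a check of the small case $|\alpha|$ tiny, where $f_{n-3}\alpha$ might be shorter than $f_{n-2}$; there the primitivity argument should be applied to $f_{n-3}f_{n-4}$ instead.

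\textbf{Second item.} Take $|\alpha|\in\{F_{n-2}-1,F_{n-2}\}$, so $\alpha\in\{f_{n-2}^{-},f_{n-2}\}$. The suffix $\alpha$ occurs as a prefix of $w$, since $f_{n-1}$ begins with $f_{n-2}$, so $\alpha$ is repeated. For maximality, take a longer suffix $x\alpha$. When $\alpha=f_{n-2}$, the word $w=f_{n-1}f_{n-2}$ contains a second occurrence of $f_{n-2}$ only internally, which is excluded by Lemma \ref{lem11}(\ref{impt4}) applied to $f_{n-1}f_n$-type words; we use $f_nf_{n-1}=f_{n-1}g_n$ to transfer. The occurrence as a prefix cannot be extended to the left. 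When $\alpha=f_{n-2}^{-}$, we use Lemma \ref{lem11}(\ref{impt6}): $f_{n-2}^{-}$ does not occur internally in $f_{n-1}f_{n-2}^{-}=f_{n-2}f_{n-3}f_{n-2}^{-}$. Indeed, an internal occurrence would lie in $f_{n-3}f_{n-2}^{-}$ or in $f_{n-2}f_{n-2}^{-}$, and both are excluded. So again the only other occurrence is the prefix occurrence, which does not extend to the left. Hence $\alpha$ is the largest repeated suffix.

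The small values of $n$ ($n=3$ for the second item, $n=4$ for the first) are checked directly on $f_3=abaab$, $f_4=abaababa$ and $f_5$.
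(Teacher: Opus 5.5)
Your proof that the candidate suffix is repeated is fine in both items. Using that $g_{n-2}=f_{n-4}f_{n-3}$ agrees with $f_{n-2}$ except in the last two letters is exactly the paper's observation.

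The gap is in maximality, and it is more than a postponed case split: the non-occurrence facts you rely on are false.

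\emph{Second item.} Both $f_{n-2}$ and $f_{n-2}^{-}$ \emph{do} occur internally in $f_{n-1}f_{n-2}=f_n$ and in $f_{n-1}f_{n-2}^{-}$, at position $F_{n-2}+1$. This is because $f_{n-3}f_{n-2}=f_{n-3}f_{n-3}f_{n-4}$ begins with $f_{n-3}f_{n-4}=f_{n-2}$. For $n=5$, $abaab$ occurs at positions $1,6,9$ of $f_5=abaababaabaab$, and $abaa$ occurs at positions $1,6,9$ of $f_4f_3^{-}=abaababaabaa$. Lemma~\ref{lem11}(\ref{impt6}) only forbids occurrences that are internal in the factor $f_{n-3}f_{n-2}^{-}$. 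The occurrence at $F_{n-2}+1$ is a \emph{prefix} of that factor, yet it is internal in $w$. So your reduction ``an internal occurrence would lie in $f_{n-3}f_{n-2}^{-}$ or in $f_{n-2}f_{n-2}^{-}$'' breaks down. Likewise, the claim in your ``precise formulation'' that $f_{n-2}$ or $f_{n-2}^{-}$ cannot occur internally in $f_{n-1}f_{n-2}^{-}$ is wrong.

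\emph{First item.} The same thing happens. For $|\alpha|\le F_{n-4}-2$, the word $f_{n-3}\alpha$ itself occurs three times in $w=f_{n-3}f_{n-4}f_{n-3}\alpha$, at positions $1$, $F_{n-3}+1$ and $F_{n-2}+1$. For instance, with $n=6$ and $\alpha=a$, $abaaba$ occurs at positions $1,6,9$ of $abaababaabaaba$. Hence no argument of the form ``every other occurrence forces a forbidden internal occurrence'' can succeed. The middle occurrence is not a shift by the period $F_{n-2}$. Moreover, the tiny-$\alpha$ case you flag is precisely where it exists, so a primitivity argument for $f_{n-3}f_{n-4}$ will not dispose of it.

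The missing idea is that this boundary occurrence must be killed by the letter in front of it, after all other occurrences have been excluded by position. The paper takes a repeated suffix $u f_{n-3}\alpha$ (resp.\ $u\alpha$ with $\alpha\in\{f_{n-2}^{-},f_{n-2}\}$) with $u\neq\lambda$. It then splits by where the copy of $f_{n-3}$ (resp.\ $\alpha$) inside a second occurrence starts.

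\begin{itemize}
\item Copies starting at positions $F_{n-3}+2,\dots,F_{n-2}$ are internal in the block $f_{n-4}f_{n-3}$. Copies starting at positions $F_{n-2}+2,\dots,F_{n-1}$ in the second item are internal in the suffix $f_{n-3}\alpha$. These are excluded by Lemma~\ref{lem11}(\ref{impt4}) (resp.\ (\ref{impt4}), (\ref{impt6})).
\item Copies starting at positions $2,\dots,F_{n-3}$ (resp.\ $2,\dots,F_{n-2}$) would do one of three things. They would lie internally in $f_{n-2}$ (resp.\ $f_{n-1}$). Or they would end exactly at the block boundary, making $f_{n-4}$ a suffix of $f_{n-3}$ (resp.\ $f_{n-3}$ a suffix of $\alpha$). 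Or they would contain the middle block $f_{n-4}$ (resp.\ $f_{n-3}$) internally. Lemma~\ref{lem11}(\ref{impt2}), (\ref{impt3}), (\ref{impt6}), (\ref{impt7}) rule all of these out.
\item Position $1$ has no letter before it.
\item The remaining candidate is at the boundary $F_{n-3}+1$ (resp.\ $F_{n-2}+1$). It is genuinely present: always in the second item, and for $|\alpha|\le F_{n-4}-2$ in the first. It is excluded only because the letter before it is the last letter of $f_{n-3}$ (resp.\ $f_{n-2}$). The letter before the suffix occurrence is the last letter of $f_{n-2}$ (resp.\ $f_{n-3}$). These two letters differ by Lemma~\ref{lem11}(\ref{impt2}).
\end{itemize}

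Your second item omits this letter argument entirely. Your first item mentions Lemma~\ref{lem11}(\ref{impt2}), but attaches it to a misidentified ``shift $F_{n-2}$'' occurrence while asserting that the others are impossible.
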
 

\begin{proof}
    \begin{itemize}
        \item Consider $1 \leq  |\alpha| \leq F_{n-2}-2$, where $n \geq 4$ :  Then as $w= f_{n-2} f_{n-3} \alpha$ and $f_{n-2} f_{n-3}^{--} = f_{n-3} f_{n-2}^{--}$, we have $f_{n-3} \alpha \in Pref(f_{n-2} f_{n-3})$. This implies $f_{n-3} \alpha$ is a repeated suffix of $w$. Now, $w= f_{n-3} f_{n-4} f_{n-3} \alpha$. For $u \in \Sigma^+$, let $u f_{n-3} \alpha$ be a repeated suffix of $w$.
    Now, $f_{n-3}$ does not occur internally in $f_{n-4} f_{n-3}$ (by Lemma \ref{lem11}, Assertion (\ref{impt4})). 
   So, $uf_{n-3}\alpha$ has exactly one occurrence in $w$ between the positions $F_{n-3}-|u|+2$ and $|w|$.
   Since $u$ has a non-empty suffix which is also a suffix of $f_{n-4}$, $u$ can not be a suffix of $f_{n-3}$ (by Lemma \ref{lem11}, Assertion (\ref{impt2})). So
   $u f_{n-3} \alpha$  can not occur in $w$ at position $F_{n-3}-|u|+1$. Now, $f_{n-4}$ does not occur internally in $f_{n-3}$ (by Lemma \ref{lem11}, Assertion (\ref{impt3})), $f_{n-4}$ can not be a suffix of $f_{n-3}$ (by Lemma \ref{lem11}, Assertion (\ref{impt2})) and $f_{n-3}$ does not occur internally in $f_{n-3}f_{n-4}$ (by Lemma \ref{lem11}, Assertion (\ref{impt3})). So $uf_{n-3}\alpha$ can not occur in $w$ in any position between $1$ and $F_{n-3}-|u|$. Thus, $u f_{n-3} \alpha$ can not be a repeated suffix of $w$.  Therefore, $f_{n-3} \alpha$ is the longest repeated suffix of $w$. 

        \item  Consider $|\alpha| = F_{n-2}$ or $|\alpha| = F_{n-2}-1$, where $n \geq 3$. Then as $\alpha \in Pref(f_{n-2})$, we have either $\alpha=f_{n-2}$ or $\alpha=f_{n-2} x^{-1}$, where $x \in \Sigma$ is the $F_{n-2}$-th letter of $f$. Combining these two forms of $\alpha$, we write  $\alpha= f_{n-2} y^{-1}$, where $y \in \{\lambda, x\}$.
        Then $w= f_{n-1} f_{n-2} y^{-1}$. For $n=3, 4$, the proof is direct. 
        Consider $n \geq 5$. 
        Using the recurrence formula for finite Fibonacci words, we have $w = f_{n-2} f_{n-3} f_{n-2} y^{-1}$. 
        Clearly, $f_{n-2} y^{-1}$ is a repeated suffix of $w$. For $u \in \Sigma^+$, let $u f_{n-2} y^{-1}$ be a repeated suffix of $w$.
    Now, $f_{n-2} y^{-1}$ does not occur internally in $f_{n-3} f_{n-2} y^{-1}$ (by Lemma \ref{lem11}, Assertions (\ref{impt4}) and (\ref{impt6})). 
   So, $uf_{n-2}y^{-1}$ has exactly one occurrence in $w$ between the positions $F_{n-2}-|u|+2$ and $|w|$.
   Since $u$ has a non-empty suffix which is also a suffix of $f_{n-3}$, $u$ cannot be a suffix of $f_{n-2}$ (by Lemma \ref{lem11}, Assertion (\ref{impt2})). So, 
   $u f_{n-2} y^{-1}$ cannot occur in $w$ at position $F_{n-2}-|u|+1$.
   Now, $f_{n-3}$ does not occur internally in $f_{n-2}$ (by Lemma \ref{lem11}, Assertion (\ref{impt3})); $f_{n-3}$ cannot be a suffix of $f_{n-2} y^{-1}$ (by Lemma \ref{lem11}, Assertions (\ref{impt2}) and (\ref{impt7})), and $f_{n-2} y^{-1}$ does not occur internally in $f_{n-2}f_{n-3}$ (by Lemma \ref{lem11}, Assertions (\ref{impt3}) and (\ref{impt6})).
   So, $uf_{n-2} y^{-1}$ cannot occur in $w$ in positions between $1$ and $F_{n-2}-|u|$. Thus $u f_{n-2} y^{-1}$ cannot be a repeated suffix of $w$. Therefore, $\alpha=f_{n-2} y^{-1}$ is the longest repeated suffix of $w$.
    \end{itemize}
\end{proof}

\begin{lemma}\label{lempref}
    For $n \geq 4$, $f_{n-1}^{--}$ is the largest repeated prefix of $f_n$.
\end{lemma}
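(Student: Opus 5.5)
We prove that $f_{n-1}^{--}$ is a repeated prefix of $f_n$ and that no longer prefix of $f_n$ occurs in $f_n$ at a position other than $1$. The argument mirrors the second item of Lemma \ref{newlemcombine}, with prefixes in place of suffixes.

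\emph{Step 1: $f_{n-1}^{--}$ occurs twice.} Write $f_n=f_{n-1}f_{n-2}=f_{n-2}f_{n-3}f_{n-2}$. By Lemma \ref{pre1}, $f_{n-2}f_{n-3}=f_{n-3}g_{n-2}$, and $g_{n-2}$ differs from $f_{n-2}$ only in its last two letters. Hence
$$f_n=f_{n-2}f_{n-3}f_{n-2}=f_{n-2}f_{n-3}\,g_{n-2}\cdots .$$
The factor starting at position $F_{n-2}+1$ is $f_{n-3}f_{n-2}$. The word $f_{n-3}f_{n-2}$ has the prefix $f_{n-3}g_{n-2}^{--}=f_{n-3}f_{n-2}^{--}$, and $f_{n-2}f_{n-3}$ has the same prefix. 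Since $f_{n-1}^{--}=(f_{n-2}f_{n-3})^{--}$, this occurrence at position $F_{n-2}+1$ is a second occurrence of $f_{n-1}^{--}$. So $f_{n-1}^{--}$ is a repeated prefix of $f_n$; the small cases $n=4,5$ can be checked directly.

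\emph{Step 2: no longer prefix repeats.} Suppose the prefix $P$ of $f_n$ of length $F_{n-1}-1$ (that is, $f_{n-1}^{-}$, or any longer prefix) occurs at some position $j>1$ in $f_n$. Then $j+F_{n-1}-2\le F_n$, so $j\le F_{n-2}+2$. We split by $j$.

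If $2\le j\le F_{n-2}$, then $P$ begins with $f_{n-2}$, so $f_{n-2}$ occurs internally in $f_{n-2}f_{n-3}f_{n-2}$. More precisely, it occurs inside $f_{n-2}f_{n-2}$ or inside $f_{n-2}f_{n-3}$ at a nonzero shift. This contradicts primitivity of $f_{n-2}$ (Lemma \ref{lem11}, Assertion (\ref{impt3})) or Lemma \ref{lem11}, Assertion (\ref{impt4}), applied after rewriting $f_{n-2}f_{n-3}f_{n-2}=f_{n-2}f_{n-2}g_{n-3}\cdots$.

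If $j=F_{n-2}+1$, the occurrence of $f_{n-1}^{-}=(f_{n-2}f_{n-3})^{-}$ would force $f_{n-3}f_{n-2}$ and $f_{n-2}f_{n-3}$ to agree up to their last letter. By Lemma \ref{pre1} together with the suffix information in Lemma \ref{malem}(\ref{malem:5}), these two words differ exactly in their penultimate letter, which gives a contradiction.

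If $j=F_{n-2}+2$, the occurrence would have to end exactly at the end of $f_n$, so $f_{n-1}^{-}$ would be a proper suffix of $f_n$. Shortening gives $f_{n-2}$ as a factor of $f_{n-3}f_{n-2}$ that ends exactly at the last letter, at a nonzero shift. This contradicts Lemma \ref{lem11}, Assertion (\ref{impt4}).

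\emph{Main obstacle.} The delicate part is the boundary case analysis around $j=F_{n-2}+1$ and $j=F_{n-2}+2$. There one has to pin down exactly where $f_{n-3}f_{n-2}$ and $f_{n-2}f_{n-3}$ disagree, namely in the last two letters, swapped $ab\leftrightarrow ba$. That disagreement is exactly what makes the length come out to $F_{n-1}-2$ and not $F_{n-1}-1$. I would settle this using the parity statement in Lemma \ref{malem}(\ref{malem:5}).
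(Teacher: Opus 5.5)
Your proof is correct, and Step 1 matches the paper. Step 2, however, takes a different route.

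\textbf{The paper's argument for Step 2.} It does no case split on the position $j$. Since $f_{n-2}$ is a prefix of $f_{n-1}^{-}$, the word $f_n=f_{n-1}f_{n-2}$ is a proper prefix of $f_{n-1}f_{n-1}^{-}$ for $n\ge 4$. So any occurrence of $f_{n-1}^{-}$ in $f_n$ at a position $j>1$ is an internal occurrence in $f_{n-1}f_{n-1}^{-}$. This includes the occurrence ending at the last letter of $f_n$. Lemma \ref{lem11}, Assertion (\ref{impt6}) excludes all such occurrences at once.

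\textbf{Your argument for Step 2.} You use three cases:
\begin{itemize}
\item for $2\le j\le F_{n-2}$, primitivity of $f_{n-2}$ inside the prefix $f_{n-2}f_{n-2}$ of $f_n$;
\item for $j=F_{n-2}+1$, the swapped last two letters of $f_{n-2}$ and $g_{n-2}$;
\item for $j=F_{n-2}+2$, Assertion (\ref{impt4}).
\end{itemize}
Together these re-prove this instance of (\ref{impt6}) by hand. This is longer, and the boundary cases you flag as the main obstacle simply do not arise in the paper's version. In exchange, your version is self-contained. It also shows why the shift by $F_{n-2}$ fails only at the penultimate letter, i.e.\ why the answer is $F_{n-1}-2$ rather than $F_{n-1}-1$.

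\textbf{Local statements to repair.} None of these affects validity.
\begin{itemize}
\item The display $f_n=f_{n-2}f_{n-3}g_{n-2}\cdots$ is false as written. What you actually use is $f_{n-2}f_{n-3}=f_{n-3}g_{n-2}$, and in the first case $f_n=f_{n-2}f_{n-2}g_{n-3}$.
\item Primitivity of $f_{n-2}$ comes from Lemma \ref{malem}, not from Assertion (\ref{impt3}).
\item For $j=F_{n-2}+2$, the prefix $f_{n-2}$ of the putative occurrence starts at position $2$ of the suffix $f_{n-3}f_{n-2}$. It ends at position $F_{n-2}+1<F_{n-1}$, so it is an \emph{internal} occurrence, which is exactly what (\ref{impt4}) forbids. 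It is not an occurrence ending at the last letter.
\end{itemize}
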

\begin{proof}
Using the recurrence formula for finite Fibonacci words, we have 
$f_n = f_{n-2} f_{n-3} f_{n-2}$. 
 For $n \geq 4$, $f_{n-2} f_{n-3}$ and $f_{n-3} f_{n-2}$ differ only by last two letters. Then, $f_{n-2} f_{n-3}^{--} = f_{n-1}^{--}$ is a repeated prefix of $f_n$. 
    Now, $f_n \in Pref(f_{n-1}f_{n-1}^{-})$ and $f_{n-1}^{-}$ does not occur internally in $f_{n-1}f_{n-1}^{-}$ (by Lemma \ref{lem11}, Assertion (\ref{impt6})) implies $f_{n-1}^{-}$ does not occur internally in $f_{n}$. So, $ f_{n-1}^{--}$ is the largest repeated prefix of $f_n$. 
\end{proof}

From Lemma \ref{newlemcombine}, we know that for $n \geq 4$, if $w=f_{n-1} \alpha$,  where $\alpha \in \Sigma^{+} \cap Pref(f_{n-2}^{--})$, then $f_{n-3} \alpha$ is the largest repeated suffix of $w$. To estimate $\mathtt{Cl}(w)-\mathtt{Cl}(w^-)$ using Lemma \ref{lem0}, we need to find the largest repeated suffix of $f_{n-3} \alpha$; the following corollary provides it: 




        

 \begin{corollary}\label{rem1}
    For $n \geq 6$, let $u=f_{n-3} \alpha$, where $\alpha \in 
    Pref(f_{n-2}^{--})$. Then we have the following :
    \begin{itemize}
        \item If $ 1 \leq  |\alpha| \leq F_{n-4} -2$, then $f_{n-5}\alpha$ is the largest repeated suffix of $u$.

        \item If $F_{n-4}-1 \leq |\alpha|  \leq F_{n-2} -2 $, then $\alpha$ is the largest repeated suffix of $u$.


    \end{itemize}
\end{corollary}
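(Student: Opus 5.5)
The plan is to reduce everything to Lemma \ref{newlemcombine}, applied with a shifted index, by first showing that $u$ is itself a prefix of $f$.

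\textbf{Step 1: $u$ is a prefix of $f$.} Since $\alpha\in Pref(f_{n-2}^{--})$, the word $f_{n-3}\alpha$ is a prefix of $f_{n-3}f_{n-2}^{--}$. By Lemma \ref{pre1}, $f_{n-3}f_{n-2}$ and $f_{n-2}f_{n-3}$ differ only in their last two letters. Hence
$$f_{n-3}f_{n-2}^{--}=f_{n-2}f_{n-3}^{--}=f_{n-1}^{--}.$$
Therefore $u$ is a prefix of $f_{n-1}$, and hence of $f$. Its length is $|u|=F_{n-3}+|\alpha|$, which lies between $F_{n-3}+1$ and $F_{n-1}-2$.

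\textbf{Step 2: the case $1\le|\alpha|\le F_{n-4}$.} Here $F_{n-3}+1\le|u|\le F_{n-2}$. Since $u$ is a prefix of $f$ starting with $f_{n-3}$, we get $u=f_{n-3}\alpha$ with $\alpha\in Pref(f_{n-4})$. We apply Lemma \ref{newlemcombine} with $n-2$ in place of $n$; this needs $n-2\ge4$, which holds since $n\ge6$.
\begin{itemize}
\item If $|\alpha|\le F_{n-4}-2$, the lemma says the largest repeated suffix of $u$ is $f_{n-5}\alpha$. This is the first assertion.
\item If $|\alpha|\in\{F_{n-4}-1,F_{n-4}\}$, the second part of the lemma (valid for $n-2\ge3$) says the largest repeated suffix of $u$ is $\alpha$. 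This covers the bottom of the range in the second assertion.
\end{itemize}

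\textbf{Step 3: the case $F_{n-4}+1\le|\alpha|\le F_{n-2}-2$.} Here $F_{n-2}+1\le|u|\le F_{n-1}-2$. So we may write $u=f_{n-2}\alpha'$ with $\alpha'\in Pref(f_{n-3})$ and $|\alpha'|=|\alpha|-F_{n-4}$. Using $F_{n-2}-F_{n-4}=F_{n-3}$, we get $1\le|\alpha'|\le F_{n-3}-2$.
\begin{itemize}
\item We apply the first part of Lemma \ref{newlemcombine} with $n-1$ in place of $n$; this needs $n-1\ge4$, which holds. It gives that the largest repeated suffix of $u$ is $f_{n-4}\alpha'$.
\item This word has length $F_{n-4}+|\alpha'|=|\alpha|$.
\item Both $f_{n-4}\alpha'$ and $\alpha$ are suffixes of $u$ of the same length, so they coincide. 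Thus $\alpha$ is the largest repeated suffix, which completes the second assertion.
\end{itemize}

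\textbf{Expected obstacle.} The only delicate point is the identification in Step 1, namely that $f_{n-3}\alpha$ really is a prefix of $f$. It relies on the two-letter discrepancy of Lemma \ref{pre1} and on the restriction $|\alpha|\le F_{n-2}-2$. The rest is index bookkeeping with Fibonacci identities, together with checking that the ranges of $|\alpha|$ match the hypotheses of Lemma \ref{newlemcombine} exactly.
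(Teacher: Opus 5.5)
Your proof is correct and follows the paper's argument. The paper uses the same three-way split on $|\alpha|$: the range $1\le|\alpha|\le F_{n-4}-2$, the two values $F_{n-4}-1$ and $F_{n-4}$, and the range $F_{n-4}+1\le|\alpha|\le F_{n-2}-2$. Each case is handled by Lemma~\ref{newlemcombine} with index $n-2$ or $n-1$. In the last case the paper writes $\alpha=f_{n-4}\gamma$ and $u=f_{n-2}\gamma$ directly, which is exactly your $\alpha'$. Your Step 1 (that $u$ is a prefix of $f$, via Lemma~\ref{pre1}) makes explicit a hypothesis of Lemma~\ref{newlemcombine} that the paper uses tacitly.
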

\begin{proof}
  We distinguish between the following cases depending on $|\alpha|$:
    \begin{itemize}
        \item If $ 1 \leq  |\alpha| \leq F_{n-4} -2$, then $\alpha$ is a prefix of $f_{n-4}$, since $\alpha \in Pref(f_{n-2}^{--})$.
        So, by Lemma \ref{newlemcombine}, $f_{n-5} \alpha$ is the largest repeated suffix of $u$.

        \item  If  $F_{n-4}-1 \leq |\alpha| \leq F_{n-4}$, then  $\alpha$ is a prefix of $f_{n-4}$, since $\alpha \in Pref(f_{n-2}^{--})$. So, by Lemma \ref{newlemcombine}, $\alpha$ is the largest repeated suffix of $u$. 



        \item If $F_{n-4}+1 \leq |\alpha| \leq F_{n-2} -2$, then as $\alpha \in Pref(f_{n-2}^{--})$, i.e.,  $\alpha \in Pref(f_{n-4} f_{n-3}^{--})$, we have $\alpha = f_{n-4} \gamma$ for some non-empty $\gamma \in Pref(f_{n-3}^{--})$. 
        This implies $u= f_{n-3} f_{n-4} \gamma = f_{n-2} \gamma$. 
Then by Lemma \ref{newlemcombine}, $f_{n-4} \gamma$ is the largest repeated suffix of $u$.

    \end{itemize}
\end{proof}

We let $\mathtt{PCl}(n)$ denote number of distinct closed factors in the prefix of $f$ of length $n$, and we let $s(n)= \mathtt{PCl}(n)-\mathtt{PCl}(n-1)$ denote the first difference sequence of $\mathtt{PCl}(n)$. 
Theorem \ref{prethe1} provides the structure of the sequence $\{s(n)\}_{n \geq 1}$:

\begin{theorem}\label{prethe1}
    The first difference sequence of the sequence $\mathtt{PCl}(n)$  of closed factors in the prefix of length $n$ of the Fibonacci word is given by the following:
    \begin{equation}\label{eq:s_pref}\{s(n)\}_{n \geq 1} = \underbrace{F_{-1}, F_{-1}}_{2F_{-1} \text{terms}},  \underbrace{F_0, F_0,}_{2F_0 \text{terms}} \underbrace{F_1, \dots, F_1}_{2F_1 \text{terms}},   \dots, \underbrace{F_m, F_m, \dots, F_m,}_{2F_m \text{ terms}} \dots\end{equation}
\end{theorem}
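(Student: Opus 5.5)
The plan is to compute $s(n)$ one prefix length at a time with Lemma \ref{lem0}, using Lemma \ref{newlemcombine} to locate the largest repeated suffix $t$ of the prefix and Corollary \ref{rem1} to locate the largest repeated suffix $z$ of $t$. Then $s(n)=|t|-|z|$, and it remains to check that the positions line up with the blocks in \eqref{eq:s_pref}.

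Let the blocks be $B_m$ for $m\geq -1$. Block $B_m$ consists of $2F_m$ terms equal to $F_m$, and its last index is
$$\sum_{i=-1}^{m}2F_i=2(F_{m+2}-1)$$
by Lemma \ref{malem}(\ref{first}). The small lengths $n\le F_5$ or so will be checked directly on $f=abaababaabaab\cdots$. For example, $s(1)=s(2)=1$ since new letters appear, and $s(3)=s(4)=1$, $s(5)=s(6)=2$.

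For larger $n$, we write the prefix as $w=f_{n'-1}\alpha$ with $1\le|\alpha|\le F_{n'-2}$; since $\ALPH(w^-)=\ALPH(w)$, Lemma \ref{lem0} applies. There are three ranges.
\begin{itemize}
\item If $F_{n'-4}-1\le|\alpha|\le F_{n'-2}-2$, then $t=f_{n'-3}\alpha$ and $z=\alpha$ by Lemma \ref{newlemcombine} and Corollary \ref{rem1}. So $s=F_{n'-3}$, constant over $F_{n'-2}-F_{n'-4}=F_{n'-3}$ consecutive lengths.
\item If $1\le|\alpha|\le F_{n'-4}-2$, then $t=f_{n'-3}\alpha$ and $z=f_{n'-5}\alpha$, so $s=F_{n'-3}-F_{n'-5}=F_{n'-4}$.
\item If $|\alpha|\in\{F_{n'-2}-1,F_{n'-2}\}$, then $t=\alpha$. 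Here I need $z$, the largest repeated suffix of $f_{n'-2}$ or $f_{n'-2}^-$. I expect Lemma \ref{lempref}, or its mirror, together with Lemma \ref{newlemcombine} applied to $f_{n'-2}=f_{n'-3}f_{n'-4}$ to give $z=f_{n'-4}^{--}$ or $z=f_{n'-4}$ (up to the $\pm1$ adjustments). Then $s=F_{n'-2}-F_{n'-4}\pm\text{small}$, and the target is $s=F_{n'-3}$.
\end{itemize}

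Reading these ranges in order over one window $F_{n'-1}<|w|\le F_{n'}$:
\begin{itemize}
\item $F_{n'-4}-2$ terms equal to $F_{n'-4}$;
\item $F_{n'-3}$ terms equal to $F_{n'-3}$;
\item $2$ more terms that should equal $F_{n'-3}$.
\end{itemize}
The next window begins with $F_{n'-3}-2$ terms equal to $F_{n'-3}$. So the value $F_{n'-3}$ occupies $F_{n'-3}+2+(F_{n'-3}-2)=2F_{n'-3}$ consecutive positions, straddling the boundary at $F_{n'}$. This is exactly block $B_{n'-3}$. As a consistency check, its end should be at $F_{n'}+F_{n'-3}-2=2F_{n'-1}-2$, which agrees with $2(F_{n'-1}-1)$. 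An induction on $n'$ then yields \eqref{eq:s_pref}.

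The main obstacle is the boundary case $|\alpha|\in\{F_{n'-2}-1,F_{n'-2}\}$. There $z$ is not given by Corollary \ref{rem1} and must be computed by hand, in the same style as the proof of Lemma \ref{newlemcombine}, using Lemma \ref{lem11}(\ref{impt3})--(\ref{impt7}) to rule out longer repeated suffixes. I will also need to verify that the two boundary values come out exactly as $F_{n'-3}$, not off by one; if they do not, the block alignment above must be rechecked.
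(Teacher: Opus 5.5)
Your plan is the paper's proof. You use Lemma \ref{lem0}, take $t$ from Lemma \ref{newlemcombine} and $z$ from Corollary \ref{rem1}, check small lengths directly, and count how the runs straddle the boundary $F_{n'}$. Your two interior ranges and the count $F_{n'-3}+2+(F_{n'-3}-2)=2F_{n'-3}$ agree with the paper, and checking directly up to $F_5$ suffices, since Corollary \ref{rem1} needs $n'\ge 6$.

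The step you call the main obstacle needs no hand computation. In that case $t=f_{n'-2}=f_{n'-3}f_{n'-4}$ (resp.\ $t=f_{n'-2}^{-}=f_{n'-3}f_{n'-4}^{-}$). This is itself a prefix of $f$ of the form $f_{n'-3}\alpha'$ with $|\alpha'|=F_{n'-4}$ (resp.\ $F_{n'-4}-1$). So the second bullet of Lemma \ref{newlemcombine}, valid since $n'-2\ge 3$, gives $z=f_{n'-4}$ (resp.\ $z=f_{n'-4}^{-}$). Hence $s=F_{n'-3}$ exactly in both boundary cases; this is precisely the paper's Case I.

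Your tentative guess $z=f_{n'-4}^{--}$ would instead give $F_{n'-3}+2$ and break the block alignment. Lemma \ref{lempref} plays no role here: it concerns repeated prefixes, which is the setting of Lemma \ref{lem01}, not Lemma \ref{lem0}.
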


In other words, if we denote repetitions in a sequence by exponents and concatenation by the symbol $\displaystyle \prod$, then we can rephrase the theorem as follows:

   $$\{s(n)\}_{n \geq 1}= \displaystyle \prod_{i=-1}^{\infty} {F_i}^{2F_i}.$$

\begin{proof}
    We first show that for each $n \geq 1$ the element $s(n)$ is a Fibonacci number. Then we prove that for each Fibonacci number $F_i$, $i \geq -1$, there are exactly  $2F_i$ consecutive terms, each equal to $F_i$, in the sequence $\{s(n)\}_{n\geq 1}$, and such block is immediately followed by a block of $2F_{i+1}$ consecutive terms, each equal to $F_{i+1}$.
    
    For $n\in \mathbb{N}$, we denote $w =Pref_{n-1}(f)$, $x\in\Sigma$ the $n$-th letter of $f$, so that $wx =Pref_n (f)$.
    For $n \leq F_6-1$, we directly calculate  $s(n)$, where $s(1)=s(2)=\cdots =s(4)=1$, $s(5)=s(6)=\cdots=s(8)=2, s(9)=s(10)=\cdots=s(14)=3, s(15)=s(16)=\cdots=s(20)=5$.
    Consider $n \geq F_6$. If $t$ is the largest repeated suffix of $wx$ and $z$ is the largest repeated suffix of $t$, then by Lemma \ref{lem0},  $s(n) = |t|-|z|$. 
    Since for each $m \geq 0$, $f_m$ is a prefix of $f$, we have the following cases: 
    \begin{itemize}

        \item \textbf{Case I:} Suppose that $ n \in \{ F_i, F_i-1\}$ for some $i \geq 6$. Then either $wx=f_i$, or $wx = f_i^{-}$. By Lemma \ref{newlemcombine}, for $wx=f_i$  we get $|t|-|z| = F_{i-2}-F_{i-4} = F_{i-3}$ and for $wx=f_i^{-}$ we get $|t|-|z| = (F_{i-2}-1)-(F_{i-4}-1) = F_{i-3}$.

        
        
        \item \textbf{Case II:} Suppose that $F_{i-1}+1 \leq n \leq F_{i}-2$ for some $i\geq 7$. 
        Then $wx=f_{i-1}\alpha$, where $\alpha \in \Sigma^{+} \cap Pref(f_{i-2}^{--})$. 
        Then by Lemma \ref{newlemcombine}, $t= f_{i-3}\alpha$. Using Corollary \ref{rem1}, we now find $z$:
        \begin{itemize}

            \item If $1 \leq |\alpha| \leq F_{i-4}-2$, then $z=f_{i-5}\alpha$. This implies $|t|-|z|=F_{i-4}$.

            \item If $ F_{i-4}-1 \leq |\alpha| \leq F_{i-2}-2$, then $z= \alpha$. This implies $|t|-|z| = F_{i-3}$.


        \end{itemize}
    \end{itemize}
    Therefore, each element of the sequence $\{s(n)\}_{n\geq 1}$ is a Fibonacci number.

     We directly calculated  $s(1)=\cdots =s(4)=1$, $s(5)=\cdots=s(8)=2, s(9)=\cdots=s(14)=3, s(15)=\cdots=s(24)=5$.
    Let $j \geq 7$. Now, from the above Cases I and II, we see that $s(n)=F_{i-3}$ for $n \in [F_{i-1}+F_{i-4}-1,F_i+F_{i-3}-2]$. So, we have $(F_i+F_{i-3}-2)-(F_{i-1}+F_{i-4}-1)+1=F_{i-2}+F_{i-5}=2F_{i-3}$ consecutive elements $F_{i-3}$ in the sequence $\{s(n)\}_{n\geq 1}$. Furthermore, any block of $2F_i$ consecutive terms, each equal to $F_i$, is immediately followed by a block of $2F_{i+1}$ consecutive terms, each equal to $F_{i+1}$.    
\end{proof}

\begin{lemma}\label{secfir}
    For $n \geq 3$, the following holds: $$ 2(F_{-1} + F_0+F_1+\cdots + F_{n-4}) + F_{n-3} +2 = F_n.$$ 
\end{lemma}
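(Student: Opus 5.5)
We derive the identity directly from Lemma \ref{malem}, Assertion (\ref{first}), which states $\sum_{i=-1}^{m-2} F_i = F_m - 1$ for every $m$ for which the sum makes sense. Applying it with $m = n-2$ gives
$$F_{-1}+F_0+\cdots+F_{n-4} = F_{n-2}-1,$$
valid for $n-2 \geq 1$, i.e. $n\geq 3$; at $n=3$ the sum is the single term $F_{-1}=1=F_1-1$, which is consistent.

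Substituting this into the left-hand side gives
$$2(F_{n-2}-1) + F_{n-3} + 2 = 2F_{n-2} + F_{n-3}.$$
Using the Fibonacci recurrence $F_m = F_{m-1}+F_{m-2}$ twice, we get $2F_{n-2}+F_{n-3} = F_{n-2} + F_{n-1} = F_n$. This holds for $n\geq 3$, since $n-3\geq 0$ keeps all indices at least $-1$, where the recurrence holds by $F_{-1}=F_0=1$ and $F_1=2$.

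The only point needing care is the edge of the index range. Assertion (\ref{first}) must be applicable with upper index $n-4 \geq -1$, so we check $n=3$ separately: the left side is $2F_{-1}+F_0+2 = 2+1+2=5$, and $F_3 = |f_3| = |abaab| = 5$. As a backup, the whole identity can also be proved by a direct induction on $n$. Passing from $n$ to $n+1$ adds $2F_{n-3}$ to the sum part and changes $F_{n-3}$ to $F_{n-2}$, so the left side increases by $F_{n-3}+F_{n-2} = F_{n-1}$. This equals $F_{n+1}-F_n$, as required.
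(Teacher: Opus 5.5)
Your proposal is correct and is essentially the paper's proof. You apply Lemma \ref{malem}, Assertion (\ref{first}) to get $F_{-1}+\cdots+F_{n-4}=F_{n-2}-1$, then simplify $2F_{n-2}+F_{n-3}=F_{n-2}+F_{n-1}=F_n$; your extra check at $n=3$ and the backup induction are fine but not needed.
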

\begin{proof}
    From Lemma \ref{malem}, Assertion (\ref{first}), we know that  $F_{-1} + F_0+F_1+\cdots+F_{n-2} = F_n-1$ for $n \geq 1$. Then, for $n \geq 3$, we have
    $ 2(F_{-1} + F_0+F_1+\cdots + F_{n-4}) + F_{n-3} +2 = 2(F_{n-2}-1) + F_{n-3} +2 = 2 F_{n-2} +F_{n-3} = F_{n-2} + F_{n-1} = F_n$.
\end{proof}

Using Theorem \ref{prethe1} and Lemma \ref{secfir}, we now 
count the number of closed factors in any prefix of $f$.

\begin{theorem}\label{Prethecl}
  For $n \geq 4$, let $p$ be a prefix of $f$ of length $l$, where $l=F_n+k$ and $0 \leq k < F_{n-1}$.  Then 
  $$\mathtt{Cl}(p) = \begin{cases} 1 + F_{n-3} (F_{n-2} + F_{n-4} +k+2), &\mbox{ if }  0 \leq  k \leq F_{n-3}-2, \\ 
   1 + F_{n-2} (F_{n-3} + k +2), &\mbox{ if }  F_{n-3} -2 < k \leq F_{n-1}-1;
  \end{cases}$$
and the initial values are $\mathtt{PCl}(1)=2$, $\mathtt{PCl}(2)=3$, $\mathtt{PCl}(3)=4$, $\mathtt{PCl}(4)=5$, $\mathtt{PCl}(5)=7$, $\mathtt{PCl}(6)=9$, $\mathtt{PCl}(7)=11$. 
\end{theorem}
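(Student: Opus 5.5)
The formula should follow by summing the first difference sequence from Theorem \ref{prethe1}. Since the empty word is closed and is the only closed factor of the empty prefix, we have $\mathtt{PCl}(0)=1$, and so
$$\mathtt{Cl}(p)=\mathtt{PCl}(l)=1+\sum_{m=1}^{l} s(m).$$
It therefore suffices to locate the index $l=F_n+k$ inside the block structure $\prod_{i\geq -1}F_i^{2F_i}$ of the sequence $s$, and then evaluate the partial sum using Lemma \ref{malem}.

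\textbf{Step 1: locating $F_n$ in the block structure.}
\begin{itemize}
\item The blocks indexed $-1,\dots,n-4$ together occupy $\sum_{i=-1}^{n-4}2F_i=2(F_{n-2}-1)$ positions, by Lemma \ref{malem}(\ref{first}).
\item By Lemma \ref{secfir}, $F_n=2(F_{-1}+\cdots+F_{n-4})+F_{n-3}+2$.
\item Hence position $F_n$ lies inside the block of value $F_{n-3}$. Exactly $F_{n-3}+2$ of its $2F_{n-3}$ terms come at or before position $F_n$, and $F_{n-3}-2$ terms remain after it.
\end{itemize}

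\textbf{Step 2: the sum up to $F_n$.} Using Lemma \ref{malem}(\ref{first2}) in the form $\sum_{i=-1}^{n-4}F_i^2=F_{n-4}F_{n-3}$, we get
$$\sum_{m=1}^{F_n}s(m)=2F_{n-4}F_{n-3}+(F_{n-3}+2)F_{n-3}=F_{n-3}(F_{n-2}+F_{n-4}+2),$$
where the last step uses $F_{n-4}+F_{n-3}=F_{n-2}$.

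\textbf{Step 3: the first case, $0\le k\le F_{n-3}-2$.} The next $k$ terms all still lie in the $F_{n-3}$ block and each equals $F_{n-3}$. Adding $kF_{n-3}$ gives the first formula.

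\textbf{Step 4: the second case, $F_{n-3}-2<k\le F_{n-1}-1$.}
\begin{itemize}
\item First finish the $F_{n-3}$ block. Taking $k=F_{n-3}-2$ in Step 3 gives the total $F_{n-3}(F_{n-2}+F_{n-4}+F_{n-3})=2F_{n-3}F_{n-2}$.
\item The remaining $k-F_{n-3}+2$ terms lie in the $F_{n-2}$ block, which has length $2F_{n-2}$.
\item This is legitimate because $k-F_{n-3}+2\le F_{n-1}-F_{n-3}+1=F_{n-2}+1\le 2F_{n-2}$.
\item Adding $(k-F_{n-3}+2)F_{n-2}$ yields $1+F_{n-2}(F_{n-3}+k+2)$, which is the second formula.
\end{itemize}

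\textbf{Step 5: initial values.} The values $\mathtt{PCl}(1),\dots,\mathtt{PCl}(7)$ follow by accumulating the directly computed $s(1),\dots,s(7)$ from the proof of Theorem \ref{prethe1}, starting from $1$.

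\textbf{Main obstacle.} The step needing most care is the index bookkeeping at the block boundary $k=F_{n-3}-2$ versus $F_{n-3}-1$: one must get the block offsets exactly right. The cleanest route is to rely on Lemma \ref{secfir} as the anchor. One should also check that Theorem \ref{prethe1} applies for small $n\ge 4$, which holds because its statement covers all $n\geq 1$ via the directly computed initial terms.
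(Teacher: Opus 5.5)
Your proposal is correct and is essentially the paper's proof. Both arguments write $\mathtt{Cl}(p)=1+\sum_{j=1}^{l}s(j)$, use Lemma~\ref{secfir} to locate $F_n+k$ inside the block of value $F_{n-3}$ (first case) or $F_{n-2}$ (second case) of the sequence in Theorem~\ref{prethe1}, and evaluate the partial sum via Lemma~\ref{malem}(\ref{first2}). The differences are cosmetic: you sum up to position $F_n$ first and reuse the first case at $k=F_{n-3}-2$ to close the $F_{n-3}$ block, whereas the paper uses $\sum_{i=-1}^{n-3}2F_i^2=2F_{n-3}F_{n-2}$ directly, and you explicitly check that the remaining $k-F_{n-3}+2\le F_{n-2}+1$ terms fit in the $F_{n-2}$ block, which the paper leaves implicit.
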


\begin{remark}
    The statement of this theorem
    can be reformulated using Zeckendorf representation. Namely, every non-negative integer can be represented as a sum of Fibonacci numbers $\{F_{i}\}_{i\geq 0}$ with no two consecutive Fibonacci numbers, and such representation is unique \cite{zeckendorf1972representations}. So, the condition $F_n \leq l < F_{n+1}$ on the length $l$ can be reformulated as follows: in Zechendorf representaton, $l$ has the most significant digit at position $n+1$ (corresponding to $F_n$). 
\end{remark}

\begin{proof}
     We can count the number of distinct closed factors in a prefix $p$ of $f$ of length $l$ as follows:
     \begin{equation}\label{eq231}
        \mathtt{Cl}(p) = \mathtt{PCl}(0)+ \sum_{j=1}^{l} (\mathtt{PCl}(j)-\mathtt{PCl}(j-1))  = 1 + \sum_{j=1}^{l} s(j).
    \end{equation}
     Now using \eqref{eq:s_pref} for the sequence $\{s(m)\}_{m \geq 1}$ from Theorem  \ref{prethe1},
     to compute  $\mathtt{Cl}(p)$, we need to count the sum of the first $l$ terms of $\{s(m)\}_{m \geq 1} $.


     \begin{itemize}


         \item Suppose  that $0 \leq k \leq F_{n-3}-2$. Then $2+k \leq F_{n-3}$, i.e.,  $F_{n-3}+2+k \leq 2F_{n-3}$. Now, by Lemma \ref{secfir}, $F_n+k= 2(F_{-1} + F_0+F_1+\cdots + F_{n-4}) + F_{n-3} +2 +k$. Then as $F_{n-3}+2+k \leq 2F_{n-3}$, the sum of the first $F_n+k$ terms of the sequence $\{s(m)\}_{m \geq 1} $ is given by the following:
         \begin{align*}
            \sum_{j=1}^{F_n+k} s(j) &= \underbrace{(F_{-1} + F_{-1})}_{2F_{-1} \text{terms}} +  \underbrace{(F_0 + F_0)}_{2F_0 \text{ terms}} +   \dots + \underbrace{(F_{n-4} + F_{n-4} + \dots + F_{n-4})}_{2F_{n-4} \text{ terms}} \\
            & + \underbrace{(F_{n-3} + F_{n-3} + \dots + F_{n-3})}_{{F_{n-3} +2+k} \text{ terms}}\\  
            &=\sum_{i=-1}^{n-4} 2 {F_i}^2 + ({F_{n-3} +2+k})F_{n-3}\\
            &= 2 F_{n-4} F_{n-3}+ ({F_{n-3} +2+k})F_{n-3} ~\hspace{0.2cm} (\text{Using Lemma }\ref{malem}, \text{Assertion} (\ref{first2}))\\
            &=F_{n-3} (F_{n-2} + F_{n-4} +2+k).
        \end{align*} 
          Thus from (\ref{eq231}) we have
         $ \mathtt{Cl}(p) =  1+ F_{n-3} (F_{n-2} + F_{n-4} +2+k)$.


         \item Suppose that $F_{n-3} -2 < k \leq F_{n-1}-1 $. Then $k= F_{n-3}-2 + k_1$, where $1 \leq k_1 \leq F_{n-2}+1$.
         Now, by Lemma \ref{secfir}, $l=F_n+k= 2(F_{-1} + F_0+F_1+\cdots + F_{n-4}) + F_{n-3} +2 + F_{n-3}-2 + k_1 = 2(F_{-1} + F_0+F_1+\cdots + F_{n-3}) + k_1$.    
        Then we have
         \begin{align*}
            \sum_{j=1}^{l} s(j) &= \underbrace{(F_{-1} + F_{-1})}_{2F_{-1} \text{terms}} +  \underbrace{(F_0 + F_0)}_{2F_0 \text{terms}} +   \dots + \underbrace{(F_{n-3} + F_{n-3} + \dots + F_{n-3})}_{2F_{n-3} \text{ terms}} \\
            &+ \underbrace{(F_{n-2} + F_{n-2} + \dots + F_{n-2})}_{{k_1} \text{ terms}}\\  
            &=\sum_{i=-1}^{n-3} 2 {F_i}^2 + k_1 F_{n-2}\\
            &= 2 F_{n-3} F_{n-2} +k_1 F_{n-2} ~\hspace{1cm} (\text{Using Lemma }\ref{malem}, \text{Assertion} (\ref{first2}))\\
            &= 2 F_{n-3} F_{n-2}+ (k-F_{n-3}+2) F_{n-2}\\
            &= F_{n-2} (F_{n-3} + k +2).
        \end{align*} 
        Thus from (\ref{eq231}) we have
                $\mathtt{Cl}(p) =1+  F_{n-2} (F_{n-3} + k +2)$.      


     \end{itemize}
 \end{proof}

\begin{remark} Theorem \ref{Prethecl} can be verified by Walnut \cite{mousavi2016automatic} using the following commands:
\begin{enumerate}
    \item def factorequal ``?msd\_fib At~ t$<$n $=>$ F[i+t]=F[j+t]":      
    
    \noindent This creates a predicate \$factorequal(i, j, n) that is true if and only if $f[i \hdots i+n-1]$ and $f[j \hdots j+n-1]$ are equal.

    \item def occurfac``?msd\_fib m$<=$n \& Ek k+m$<=$n \& \$factorequal(i,j+k,m)":

     \noindent This creates a predicate \$occurfac(i,j,m,n) that is true if and only if  $f[i \hdots i+m-1]$ is a factor of $f[j \hdots j+n-1]$.

     \item def border ``?msd\_fib m$>=$1 \& m$<$n \& \$factorequal(i,i+n-m,m)":

      \noindent This creates a predicate \$border(i,m,n) that is true if and only if  $f[i \hdots i+m-1]$ is a border of $f[i \hdots i+n-1]$.

    \item def closedword ``?msd\_fib (n$<=$1) $|$ Ej j$<$n \& \$border(i,j,n) \& $\sim$\$occurfac(i,i+1,j,n-2)":

  \noindent This creates a predicate \$closedword(i,n) that is true if and only if  $f[i \hdots i+n-1]$ is a closed word.

     \item def insideclosedfactor ``?msd\_fib m$>=$1 \& \$closedword(i,m) \& \$occurfac(i,j,m,n)":

 \noindent This creates a predicate \$insideclosedfactor(i,j,m,n) that is true if and only if  $f[i \hdots i+m-1]$ is a non-empty closed factor of $f[j \hdots j+n-1]$.

     \item def novelclosedfactorfib ``?msd\_fib \$insideclosedfactor(i,j,m,n) \& Ak (k$>=$j \& (k+m)$<=$(j+n)) \& \$factorequal(i,k,m) $=>$ k$>=$i":

 \noindent This creates a predicate \$novelclosedfactorfib(i,j,m,n) that is true if and only if  $f[i \hdots i+m-1]$ is a non-empty closed factor of $f[j \hdots j+n-1]$ which occurs exactly once in $f[j \hdots j+n-1]$.

     \item def prefixnovelclosedfactorfib n ``?msd\_fib \$novelclosedfactorfib(i,0,m,n)":

 \noindent This creates a predicate \$prefixnovelclosedfactorfib(i,m,n) that is true if and only if  $f[i \hdots i+m-1]$ is a non-empty closed factor of $f[0 \hdots n-1]$ which occurs exactly once in $f[0 \hdots n-1]$.
\end{enumerate}

This gives a linear representation for $\mathtt{Cl}(p)$, from which we can verify Theorem \ref{Prethecl}.

\end{remark}

\subsection{Minimum Number of closed factors in any factor of $f$}\label{sec5}

In this subsection, we estimate the number of distinct closed factors in any factor of the Fibonacci word $f$. We begin with determining the number of closed factors in any factor of $f$ of length $F_n$. Using this value, we then estimate the minimum number of closed factors in any factor of $f$. 


First we observe following. For any $n \geq 2$, $f_n^2$ is a prefix of $f$. Then as $f_n$ is primitive,  all distinct $F_n$ conjugates of $f_n$ occur in $f_n^2$ for $n \geq 2$.
So, for $n \geq 2$, the set of conjugates of $f_n$ is $$C(f_n) = \{ u^{-1} f_n u \mid u \in Pref(f_n), 0 \leq |u| \leq F_n-1\}.$$
Now, $f$ has $F_n+1$ distinct factors of length $F_n$. Then there exists a factor of $f$ of length $F_n$ which does not lie in $C(f_n)$. This word is called the \textit{singular} word of length $F_n$; we denote it by $w_n$. We remark that this is true in general for every standard factor of any Sturmian word (see, e.g., Chapter 2 is \cite{lothaire}). For the Fibonacci word, since its directive sequence is $(ab)^{\infty}$, we also have that when $n$ is odd (resp. even), then $w_n = a p_n a$ (resp, $w_n = b p_n b$), where  $p_n$ denotes the palindromic prefix of $f_n$ of length $F_n-2$ (see also \cite{ficiu}).

 The following proposition gives the difference between the length of the largest repeated suffix $s$ of a factor of the Fibonacci word of length $F_n$ and the length of the largest repeated suffix of $s$ (and a symmetric statement with prefixes). We recall that this expression is needed to estimate the number of closed factors using Lemma \ref{lem0}.


\begin{proposition}\label{prop1}
    For $n \geq 7 $, let $z$ be a factor of $f$ of length $F_n$,  $s$ be the largest repeated suffix of $z$, $r$ be the largest repeated suffix of $s$, $p$ be the largest repeated prefix of $z$ and $q$ be the largest repeated prefix of $p$. Then the following hold true:
    \begin{enumerate}


        \item Let $z=u^{-1} f_n u$, where $ u \in Pref(f_{n-3}^{--})$. 
        Then  $|s|-|r| = |p|-|q| = F_{n-3}$. 
    
        \item Let $z = u^{-1} f_n u$, where $u=f_{n-3}^{-}$. Then $|s|-|r|=F_{n-4}+1$ and $|p|-|q| = F_{n-3} + 1$.     


        \item Let $z=u^{-1} f_n u$, where $F_{n-3} \leq |u|\leq F_{n-2} -1$, i.e. $u=f_{n-3} \alpha$, $\alpha \in Pref(f_{n-4}^{-})$.
            \begin{enumerate}
                \item If $0 \leq |\alpha| \leq F_{n-4}-2$, then  $|s|-|r|=|p|-|q| = F_{n-4}$.
                \item If $ |\alpha| = F_{n-4}-1$, then  $|s|-|r| = F_{n-3}+1$ and $|p|-|q|=F_{n-4}+1$.  
            \end{enumerate}



        \item Let $z=u^{-1} f_n u$, where $F_{n-2} \leq |u|\leq F_{n-1} - 2$, i.e. $u=f_{n-2} \alpha$, $\alpha \in Pref(f_{n-3}^{--})$. 
        Then $|s|-|r|= |p|-|q| = F_{n-3}$.      

        \item Let $z=u^{-1} f_n u$, where $u=f_{n-1}^{-}$. Then $|s|-|r| = F_{n-2}$ and $|p|-|q| = F_{n-3}$.
        
        \item Let $z=u^{-1} f_n u$, where $u=f_{n-1}$. Then,  $|s|-|r| = F_{n-2}$ and $|p|-|q| = F_{n-3}+2$.
    
        \item Let $z=u^{-1} f_n u$, where $F_{n-1}+1 \leq |u|\leq F_{n} -2$, i.e. $u=f_{n-1} \alpha$, $\alpha \in Pref(f_{n-2})$ and $ 1 \leq |\alpha| \leq F_{n-2}-2 $. 
        \begin{enumerate}
             \item If $ 1 \leq |\alpha| \leq F_{n-4}-2 $, then, $|s|-|r| = F_{n-2} - |\alpha|$ and $|p|-|q| = F_{n-3} + |\alpha| +2$.




                 \item  If $ F_{n-4} -1 \leq |\alpha| \leq F_{n-3}-1 $, then $|s|-|r|=|p|-|q| = F_{n-2} $.

                \item If $|\alpha| = F_{n-3}$, then $|s|-|r|= F_{n-2} $ and $ |p|-|q| = F_{n-3} +2$.  
        
            \item If $ F_{n-3} +1 \leq |\alpha| \leq F_{n-2}-2 $, then  $|s|-|r| = F_{n-1} - |\alpha|$ and $|p|-|q| = |\alpha| + 2$.
            
        \end{enumerate}

        \item Let $z=u^{-1} f_n u$, where $u=f_{n}^{-}$. Then,  $|s|-|r| = F_{n-3}$ and $|p|-|q| = F_{n-2}$.

        \item Let $z=w_n$. Then $|s|-|r|=|p|-|q| = F_{n-3}$.
    \end{enumerate}
\end{proposition}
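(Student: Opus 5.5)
The plan is to reduce every case to explicit computations of longest repeated suffixes (and, symmetrically, prefixes) inside a fixed window of $f$. Since $f_n^2$ is a prefix of $f$, each conjugate $z=u^{-1}f_nu$ occurs in $f_nf_n$. Writing $f_n u = u z$ places $z$ as the suffix of length $F_n$ of the prefix $f_nu$ of $f$. The repeated suffixes of $z$ are then determined by the occurrences of suffixes of $f_nu$ inside $z$ itself. For prefixes we use $z=u^{-1}f_nu$ read as the suffix of $f_nu$ starting after $u$: the repeated prefixes of $z$ are the prefixes of $u^{-1}f_n\cdot u$ that occur again in $z$. The singular word $w_n$ is handled separately, via its explicit form $ap_na$ or $bp_nb$ with $p_n$ the palindromic prefix of $f_n$ of length $F_n-2$.

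For the suffix side, fix $u$ in each range of the statement. Decompose $f_nu$ using $f_n=f_{n-1}f_{n-2}=f_{n-2}f_{n-3}f_{n-2}$ and Lemma \ref{pre1} ($f_nf_{n-1}=f_{n-1}g_n$, with $f_n$ and $g_n$ differing only in the last two letters). This writes $z$ as $y\beta$ with $\beta$ a prefix of some $f_m$ glued after a standard word, in the same shape as in Lemma \ref{newlemcombine}. Then proceed as follows.
\begin{enumerate}
\item Apply Lemma \ref{newlemcombine} and Corollary \ref{rem1} to identify $s$. The candidates are $f_{m-2}\beta$ when $|\beta|\le F_{m-1}-2$, and $\beta$ itself in the two boundary lengths. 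Here the repetition must be checked inside $z$ rather than inside a prefix of $f$, so some occurrences are lost when $u$ is long.
\item Apply the same lemmas again to $s$ to identify $r$.
\item Subtract the lengths. For example, in case (1), $u\in Pref(f_{n-3}^{--})$ gives $s=f_{n-2}u$ or the analogous word, and $|s|-|r|=F_{n-3}$, matching Theorem \ref{prethe1} at the point $n=F_n$.
\end{enumerate}
The boundary cases (2), (3b), (5), (6), (7c) and (8) are where the $-2$ corrections in Lemma \ref{lem11}, Assertions (\ref{impt6}) and (\ref{impt7}), and the fact that $f_n$ and $g_n$ differ only in the last two letters, produce the $+1$ and $+2$ shifts. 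These must be checked letter by letter using Lemma \ref{malem}(\ref{malem:5}).

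The prefix statements follow by the same method applied to reversals. The reversal of a factor of $f$ is a factor of $f$ (the factor set is closed under reversal), and reversal maps the conjugate $u^{-1}f_nu$ to a conjugate $u'^{-1}f_nu'$ of $f_n$ with $|u'|$ determined by $|u|$. This uses $f_n=p_nxy$ with $p_n$ a palindrome, so that the conjugates of $f_n$ are permuted by reversal with an index shift of $2$. This explains the asymmetric pairs such as $(F_{n-2}-|\alpha|,\,F_{n-3}+|\alpha|+2)$ in case (7a), and I will use Lemma \ref{lempref} as the base computation. Under reversal, the largest repeated prefix (respectively, the largest repeated prefix of it) becomes the largest repeated suffix (respectively, the largest repeated suffix of it). So each prefix value is the suffix value of the conjugate with shift index $|u'|$, and the prefix column of the table is read off from the suffix column after the index change.

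The main obstacle is case (7), where $|u|>F_{n-1}$. There the longest repeated suffix is not inherited from the prefix of $f$: occurrences to the left of the starting point of $z$ are cut off, and $s$ shrinks linearly with $|\alpha|$. This gives $F_{n-2}-|\alpha|$ in (7a) and $F_{n-1}-|\alpha|$ in (7d). I would first try to show that $z=\gamma f_{n-1}\alpha$, where $\gamma$ is a suffix of $f_{n-2}$ of length $F_{n-2}-|\alpha|$. Within $z$, the only repeated long suffixes are then those whose earlier occurrence begins inside $\gamma$. Their lengths can be read off from the period-$F_{n-1}$ structure of $f_{n-1}f_{n-1}$, together with the non-internal-occurrence statements in Lemma \ref{lem11}. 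The small indices ($n\ge 7$ boundary) I would verify directly, or with the Walnut predicates from the earlier remark.
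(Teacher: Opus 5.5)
Your plan for the suffix column does not work as stated. Lemma~\ref{newlemcombine} and Corollary~\ref{rem1} describe prefixes of $f$, and $z$ is not a prefix. Applied to $f_nu$, they give only $|s|\le|t|$, where $t$ is the largest repeated suffix of $f_nu$. The reason is that the earlier occurrence of $t$ those lemmas use starts at position $1$ of $f$, which lies outside $z$ as soon as $u\neq\lambda$.

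Among the conjugate cases, a second occurrence of $t$ inside $z$ exists only in (1), (5) and (6). Even in (1) this has to be proved: the paper finds $f_{n-2}u$ at position $|vf_{n-4}|+1$ of $z=vf_{n-4}f_{n-3}f_{n-3}f_{n-4}u$, using $u\in Pref(g_{n-3})$. In (2), (3), (4), (7) and (8) the true $s$ is a different and much shorter word, typically a border of $z$. This happens already for short $u$, not only for long $u$ as you suggest.
\begin{itemize}
\item In (2), $z=xf_{n-4}f_{n-3}f_{n-3}f_{n-4}f_{n-3}^{-}$, with $x$ the last letter of $f_{n-3}$. Here $t=f_{n-2}f_{n-3}^{-}$ does not recur in $z$. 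Instead $s=xf_{n-4}f_{n-3}^{-}$ is a border of length $F_{n-2}$, and $r=f_{n-3}^{-}$. Reading $s$ off the lemma would give $F_{n-2}$ instead of $F_{n-4}+1$, so these values are not ``$-2$ corrections''.
\item In (4), $t=f_{n-2}f_{n-2}\alpha$, but $s=u$.
\item In (7), $s$ does not shrink with $|\alpha|$. The word $z=\gamma f_{n-1}\alpha$ has period $F_{n-2}$ and a border of length $F_{n-1}$, so $|s|=F_{n-1}$ throughout. The dependence on $|\alpha|$ sits in $r$; for example $r=f_{n-3}\alpha$ in (7a).
\end{itemize}

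The same problem affects $r$, because $s$ is often not a prefix of $f$, so the lemmas cannot simply be ``applied again''. One fix is to view $s$ as a suffix of a prefix of $f$ to get an upper bound on $|r|$: take $f_{n-1}^{-}$ in (2) and $f_{n-1}\alpha$ in (7a). Then exhibit an explicit earlier occurrence for the lower bound. What is missing, case by case, is this: identify $s$ together with its second occurrence inside $z$, and exclude every longer suffix using Lemma~\ref{lem11}, as the paper does for (1).

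Your reduction of the prefix column to the suffix column by reversal is correct. It halves the work compared with the paper, which computes $p$ and $q$ directly. You do need to state the map precisely. From $f_n^R=yxp_n$ and $p_nyx=g_n=\sigma^{F_{n-1}}(f_n)$ one gets $\bigl(\sigma^i(f_n)\bigr)^R=\sigma^{F_{n-1}-2-i}(f_n)$, with indices taken mod $F_n$. This is a reflection, not a shift by $2$. It also needs $g_n\in C(f_n)$, not just the palindromicity of $p_n$.
\begin{itemize}
\item It pairs (1) with (4), (2) with (3b), (5) with (8), and (6) with (7d) at $|\alpha|=F_{n-2}-2$.
\item It pairs (7a) with (7c) together with the rest of (7d).
\item It fixes (3a), (7b), and $w_n$, which is a palindrome.
\end{itemize}
The stated table is consistent with this pairing. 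After the reduction, though, the suffix column carries all the content, and that is the part your proposal leaves undone.
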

\begin{proof}

    We provide a proof of Assertion (1). Assertions (2)--(9) are proved in a similar way using  Lemma \ref{lem11}.
    


    
    For the proof of Assertion (1), consider $ u \in Pref(f_{n-3}^{--})$. Then $f_{n-3}=uv$ for some $v \in \Sigma^+$.
         Now using the recurrence formula for finite Fibonacci words,   $z=u^{-1} f_n u = u^{-1} f_{n-3} f_{n-4} f_{n-3} f_{n-2} u = v f_{n-4}  f_{n-3} f_{n-3} f_{n-4} u $. 
        As $0\leq |u| \leq F_{n-3}-2$, $f_{n-3} f_{n-4} u \in Pref(f_{n-3} f_{n-3} f_{n-4})$. 
        So, $f_{n-3} f_{n-4} u $ is a repeated suffix of $z$.
        For some $ \alpha \in \Sigma^+$, consider $\alpha f_{n-3} f_{n-4} u $ which is a repeated suffix of $z$.
        As $f_{n-3}$ is a primitive word, $f_{n-3}$ does not occur internally in $f_{n-3}f_{n-3}$. So, $\alpha f_{n-3} f_{n-4} u $ has exactly one occurrence in $z$ between the positions $|v|+F_{n-4}-|\alpha| +2$ and $|z|$.
        Since $\alpha$ has a non-empty suffix which is also a suffix of $f_{n-3}$, $\alpha$ cannot be a suffix of $vf_{n-4}$  (by Lemma \ref{lem11}, Assertion (\ref{impt2})). So, $\alpha f_{n-3} f_{n-4} u $ cannot occur in $z$ at position $|v|+F_{n-4}-|\alpha| +1$. 
        Now, $f_{n-3}$ does not occur internally in $f_{n-4}f_{n-3}$  (by Lemma \ref{lem11}, Assertion (\ref{impt4})). So, $\alpha f_{n-3} f_{n-4} u $ has exactly one occurrence in $z$ between the positions $|v| -|\alpha| +2$ and $|z|$. Since  $f_{n-3}f_{n-4} \neq f_{n-4}f_{n-3}$, the word $\alpha f_{n-3} f_{n-4} u $ cannot occur in $z$ at position $|v| -|\alpha| +1$.
        Now, $f_{n-4}$ does not occur internally in $f_{n-3}$ (by Lemma \ref{lem11}, Assertion (\ref{impt3})), $f_{n-4}$ can not be a suffix of $f_{n-3}$  (by Lemma \ref{lem11}, Assertion (\ref{impt2})) and $f_{n-3}$ does not occur internally in $f_{n-2}$ (by Lemma \ref{lem11}, Assertion (\ref{impt3})). 
        Then as $\alpha \in \Sigma^+$, the word $\alpha f_{n-3} f_{n-4} u $ can not occur in $z$ between the positions $1$ and $|v| -|\alpha| $.
        Thus, $s = f_{n-3} f_{n-4} u =f_{n-2}u$.
        Then by Lemma \ref{newlemcombine},  $r = f_{n-4}u$. 
        Therefore, $|s|-|r|=F_{n-3}$.

        Now, $z = v f_{n-4}  f_{n-3} f_{n-3} f_{n-4} u $. Since $v \in Suff(f_{n-3})$, $ |u| \leq F_{n-3}-2$, and $f_{n-4} f_{n-3}^{--} \in Pref(f_{n-3} f_{n-4})$, we have   $  v f_{n-4} f_{n-3}^{--}$ is a repeated prefix of $z$. If possible, let $v f_{n-4} f_{n-3}^{-}$ be a repeated prefix of $z$.  
        Now, $f_{n-3}^{-}$ does not occur internally in $f_{n-3}f_{n-3}^{-}$ (by Lemma \ref{lem11}, Assertion (\ref{impt6})), $f_{n-4}$ can not be a suffix of $f_{n-3}$ (by Lemma \ref{lem11}, Assertion (\ref{impt2})), 
         $f_{n-3}^{-}$ does not occur internally in $f_{n-2}$ (by Lemma \ref{lem11}, Assertion (\ref{impt6})), 
        $f_{n-4}$ can not be a proper suffix of $f_{n-3}^{-}$ (by Lemma \ref{lem11}, Assertion (\ref{impt7})), and $f_{n-4}$ does not occur internally in $f_{n-3}$ (by Lemma \ref{lem11}, Assertion (\ref{impt3})).
        Thus, $v f_{n-4} f_{n-3}^{-}$ can not be a repeated prefix of $z$. Therefore, 
        $p = v f_{n-4} f_{n-3}^{--}$.
        Now, $p=v f_{n-4} f_{n-3}^{--} = v f_{n-3} f_{n-4}^{--}$. 
        Clearly, $v f_{n-4} ^{--}$ is a repeated prefix of $p$. 
        Using similar process of finding $p$, we can show that $q=v f_{n-4}^{--}$. So, $|p|-|q|= F_{n-3}$.
\end{proof}



We now examine the difference between the numbers of distinct closed factors in two consecutive factors of length $F_n$ of $f$. To do that, we need the following observations.
Consider $\alpha = xu$ and $\beta = u y$, where $x , y \in \Sigma$, $u \in \Sigma^+$ and  $\ALPH(u)=\ALPH(\alpha) = \ALPH(\beta)$. Let $p$ be the largest repeated prefix of $\alpha$, $q$ be the largest repeated prefix of $p$,  $s$ be the largest repeated suffix of $\beta$ and $r$ be the largest repeated suffix of $s$. 
Then by Lemmas \ref{lem0}  and \ref{lem01}, we have $\mathtt{Cl}(\alpha) - \mathtt{Cl}(u) = |p|-|q|$ and  $\mathtt{Cl}(\beta) - \mathtt{Cl}(u) = |s|-|r|$.
This implies 
\begin{equation}\label{equ1}
    \mathtt{Cl}(\beta) - \mathtt{Cl}(\alpha) = (|s|-|r|) - (|p|-|q|).
\end{equation}
For $w=xy$ and $|x|=i$, let $\sigma^i(w) = yx$. 
Then $C(w)=\{\sigma^i(w) ~ | ~ 0 \leq i \leq |w|-1\}$.
We know that each of the $F_n$ conjugates of $f_n$ is a factor of $f_n^2$ for $n \geq 2$. Then by \eqref{equ1}, for $1 \leq i \leq F_n-1$, we have
\begin{equation}\label{equ2}
    \mathtt{Cl}(\sigma^i(f_n)) - \mathtt{Cl}(\sigma^{i-1}(f_n)) = (|s|-|r|) - (|p|-|q|),
\end{equation}
where $s$ is the largest repeated suffix of $\sigma^i(f_n)$, $r$ is the largest repeated suffix of $s$, $p$ is the largest repeated prefix of $\sigma^{i-1}(f_n)$ and $q$ is the largest repeated prefix of $p$.
Furthermore, Chuan and  Ho (Theorem 2.8; \cite{chuan2005locating}) discussed the position of the first occurrence of $w_n$ in $f$.
\begin{theorem}\label{imp1}\cite{chuan2005locating}
    For $n \geq 2$, the first occurrence of $w_n$ in $f$ is at the position $F_{n+1}-1$. In addition, if $v \in \Sigma^{F_n}$ occurs at the position $F_{n+1}-2$ in $f$, then $v=xy f_{n-2} u$, where $f_{n-1}=uxy$, $x, y \in \Sigma$.  
\end{theorem}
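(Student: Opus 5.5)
The plan is to read positions as $0$-indexed, so that the factor of length $F_n$ at position $j$ is $f[j \hdots j+F_n-1]$, and to work inside the prefix $f_{n+2}$ of $f$. I will prove the second assertion first, since it follows from the recursive structure. Then I will derive the first assertion from a periodicity argument combined with a Parikh-vector observation.

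\emph{Second assertion.} Since $f = f_{n+1} f_n \cdots$ and $f_{n+1} = f_n f_{n-1}$ with $f_{n-1} = uxy$, the letters at positions $F_{n+1}-2$ and $F_{n+1}-1$ of $f$ are the last two letters $x, y$ of $f_{n+1}$. The following $F_n-2$ letters form $f_n^{--}$. By Lemma \ref{pre1}, $g_n = f_{n-2}f_{n-1}$ differs from $f_n$ only in its last two letters, so
$$f_n^{--} = g_n^{--} = f_{n-2}(uxy)^{--} = f_{n-2}u.$$
Hence the factor at position $F_{n+1}-2$ is $v = xyf_{n-2}u$.

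\emph{All earlier factors are conjugates of $f_n$.} Every factor of length $F_n$ starting at a position $j \le F_{n+1}-2$ lies in $f_{n+2}^{--}$, because $F_{n+1}-2+F_n = F_{n+2}-2$. Using Lemma \ref{pre1} in the form $f_{n-1}f_n = f_n g_{n-1}$, we get
$$f_{n+2} = f_{n+1}f_n = f_n f_{n-1} f_n = f_n f_n g_{n-1}.$$
Since $g_{n-1}$ and $f_{n-1}$ agree except in the last two letters, this gives
$$f_{n+2}^{--} = f_n f_n f_{n-1}^{--}.$$
This word is a prefix of $f_n^\omega$, so it has period $F_n$. Consequently every length-$F_n$ factor in it is a conjugate of $f_n$, and $w_n$ cannot occur at any position $\le F_{n+1}-2$.

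\emph{The factor at position $F_{n+1}-1$ is not a conjugate.} Call this factor $w$; its last letter sits at position $F_{n+2}-1$. That is the last letter of $f_{n+2} = f_n f_n g_{n-1}$, which is the last letter of $g_{n-1}$. By Lemma \ref{pre1} it differs from the last letter of $f_{n-1}$, i.e.\ from the letter that period $F_n$ would predict. So $w$ agrees in its first $F_n-1$ letters with the conjugate $c$ of $f_n$ predicted by periodicity, but differs from $c$ in its last letter. Suppose $w$ were itself a conjugate of $f_n$. All conjugates of $f_n$ have the same Parikh vector, and a word's last letter is determined by its Parikh vector together with its first $F_n-1$ letters. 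Then $w = c$, a contradiction. Therefore $w \notin C(f_n)$. Since $f$ has exactly $F_n+1$ factors of length $F_n$ and $F_n$ of them are conjugates of $f_n$, $w$ is the singular word $w_n$, and its first occurrence is at position $F_{n+1}-1$.

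\emph{Main obstacle.} The delicate step is the bookkeeping of the last two letters. One must check that the swap in $g_{n-1}$ versus $f_{n-1}$ occurs exactly at positions $F_{n+2}-2$ and $F_{n+2}-1$. Then the first break of period $F_n$ falls at position $F_{n+2}-1$, exactly the last letter of the factor at $F_{n+1}-1$, which is what the argument needs. This also requires fixing the indexing convention, presumably $0$-based given the statement; small cases such as $n=2,3$ I would verify directly.
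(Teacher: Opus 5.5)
The paper does not prove this statement; it quotes it from Chuan and Ho, so your argument is an independent, elementary derivation from Lemma \ref{pre1}, the primitivity of $f_n$ and the count of $F_n+1$ factors of length $F_n$. Most of it is correct. The second assertion, the identity $f_{n+2}^{--}=f_nf_nf_{n-1}^{--}$ with its period $F_n$ (which rules out $w_n$ at positions $\le F_{n+1}-2$), the Parikh-vector step and the final counting step all hold.

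However, the step you flagged as delicate has an off-by-one error. With $0$-based positions, the factor $w$ of length $F_n$ starting at $F_{n+1}-1$ occupies positions $F_{n+1}-1,\dots,F_{n+2}-2$. Its last letter is therefore at $F_{n+2}-2$, which is the \emph{penultimate} letter of $f_{n+2}=f_nf_ng_{n-1}$. Position $F_{n+2}-1$, the one you compare (last letter of $g_{n-1}$ versus last letter of $f_{n-1}$), is not in $w$ at all. Likewise, the first break of period $F_n$ is at $F_{n+2}-2$, not $F_{n+2}-1$, since both swapped letters violate the period. For a check, take $n=3$: $w_3=aabaa$ sits at positions $7$ to $11$ of $f=abaababaabaab\cdots$, and $F_5-2=11$. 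As written, the two index errors cancel, so your conclusion is true, but the letter comparison that is supposed to justify it is about the wrong position.

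The repair is short. The last letter of $w$ is the penultimate letter of $g_{n-1}$. Periodicity predicts the penultimate letter of $f_{n-1}$, because $f_nf_nf_{n-1}$ is a prefix of $f_n^\omega$. You then need these two letters to differ. That is slightly more than the literal wording ``differ only by the last two letters'' in Lemma \ref{pre1}: you need both of the last two letters to differ. This follows from Lemma \ref{malem}(\ref{malem:5}). By parity, $f_{n-1}$ ends in $ab$ or $ba$, while $g_{n-1}=f_{n-3}f_{n-2}$ ends in the last two letters of $f_{n-2}$, which form the opposite pattern. For $n=2$ you can see this directly from $f_1=ab$ and $g_1=ba$. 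With this correction, $w$ agrees with a conjugate $c$ of $f_n$ in its first $F_n-1$ letters and differs in the last. Your Parikh-vector argument then shows $w\notin C(f_n)$, and the counting identifies $w$ with $w_n$. This gives a self-contained proof of the cited result.
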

Thus by Assertion $4$ of Proposition \ref{prop1}, equation \eqref{equ1} and Theorem \ref{imp1} we have
\begin{equation}\label{equ3}
    \mathtt{Cl}(w_n) - \mathtt{Cl}(v) = (|s|-|r|) - F_{n-3},
\end{equation}
where $s$ is the largest repeated suffix of $w_n$, $r$ is the largest repeated suffix of $s$ and $v \in \Sigma^{F_n} \cap C(f_n)$ occurs at $F_{n+1}-2$ in $f$.
Thus, from Proposition \ref{prop1} and Equations (\ref{equ2}), $(\ref{equ3})$, we have the following observation.
\begin{observation}\label{prop4}
    Let $n \geq 7$ and $w=f_n$. Then, we have the following:
    \begin{enumerate}

        \item If $1 \leq  i \leq F_{n-3}-2$, then $\mathtt{Cl}(\sigma^i(w)) - \mathtt{Cl}(\sigma^{i-1}(w)) = F_{n-3} - F_{n-3} = 0$.

        \item If $i=F_{n-3}-1$, then $\mathtt{Cl}(\sigma^i(w)) - \mathtt{Cl}(\sigma^{i-1}(w)) = F_{n-4}+1 - F_{n-3} = 1 - F_{n-5}$.

        \item If $i=F_{n-3}$, then $\mathtt{Cl}(\sigma^i(w)) - \mathtt{Cl}(\sigma^{i-1}(w)) = F_{n-4} - F_{n-3} -1 = -F_{n-5} - 1$.



         \item If $F_{n-3}+1 \leq  i \leq F_{n-2}-2$, then $\mathtt{Cl}(\sigma^i(w)) - \mathtt{Cl}(\sigma^{i-1}(w)) = F_{n-4} - F_{n-4} = 0$.

        \item If $i=F_{n-2}-1$, then $\mathtt{Cl}(\sigma^i(w)) - \mathtt{Cl}(\sigma^{i-1}(w)) = F_{n-3} +1 - F_{n-4} = 1+F_{n-5}$.

        \item If $i= F_{n-2}$, then $\mathtt{Cl}(\sigma^i(w)) - \mathtt{Cl}(\sigma^{i-1}(w)) = F_{n-3} - F_{n-4} -1 = F_{n-5} - 1$.



        \item If $F_{n-2}+1 \leq i \leq F_{n-1} - 2$, then  $\mathtt{Cl}(\sigma^i(w)) - \mathtt{Cl}(\sigma^{i-1}(w)) = F_{n-3} - F_{n-3} = 0$.



         \item If $F_{n-1} - 1 \leq i \leq F_{n-1}$, then $\mathtt{Cl}(\sigma^i(w)) - \mathtt{Cl}(\sigma^{i-1}(w)) = F_{n-2} - F_{n-3} = F_{n-4}$.

        \item If $i=F_{n-1} +1$, then $\mathtt{Cl}(\sigma^i(w)) - \mathtt{Cl}(\sigma^{i-1}(w)) = F_{n-2} - 1 - F_{n-3} - 2 = F_{n-4} - 3$.

        \item If $i=F_{n-1} + |\alpha|$, where $2 \leq |\alpha| \leq F_{n-4}-2$, then 
        $\mathtt{Cl}(\sigma^i(w)) - \mathtt{Cl}(\sigma^{i-1}(w)) = F_{n-2} - |\alpha| - |\alpha| -1 - F_{n-3} = F_{n-4} - 2 |\alpha| -1$.





        \item If $F_{n-1} + F_{n-4} -1 \leq i \leq F_{n-1} + F_{n-3} $, then $\mathtt{Cl}(\sigma^i(w)) - \mathtt{Cl}(\sigma^{i-1}(w)) = F_{n-2} - F_{n-2} = 0$.

        \item If $i = F_{n-1} + F_{n-3} +1 $, then $\mathtt{Cl}(\sigma^i(w)) - \mathtt{Cl}(\sigma^{i-1}(w)) = (F_{n-1} - F_{n-3} -1) - (F_{n-3} + 2) = F_{n-4} -3$.

        \item If $i=F_{n-1} + \gamma $, where $F_{n-3} + 2 \leq |\gamma| \leq F_{n-2}-2$,
        then $\mathtt{Cl}(\sigma^i(w)) - \mathtt{Cl}(\sigma^{i-1}(w)) = F_{n-1} - |\gamma| - |\gamma| - 1 = F_{n-1} - 2 |\gamma| - 1 $.

        \item If $i = F_n -1$, then $\mathtt{Cl}(\sigma^i(w)) - \mathtt{Cl}(\sigma^{i-1}(w)) = F_{n-3} - F_{n-2} = -F_{n-4}$.

        \item If $ i = F_n$, then $\mathtt{Cl}(\sigma^i(w)) - \mathtt{Cl}(\sigma^{i-1}(w)) = F_{n-3} - F_{n-2} = -F_{n-4}$.

        \item Let $v \in \Sigma^{F_n}$ occur at $F_{n+1}-2$ in $f$. Then, by Equation \ref{equ3}, $\mathtt{Cl}(w_n) - \mathtt{Cl}(v) = F_{n-3} - F_{n-3}=0$.
        
    \end{enumerate}
\end{observation}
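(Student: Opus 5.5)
The plan is to derive each item of the observation mechanically from Equation \eqref{equ2} together with the values in Proposition \ref{prop1}. Fix $n\ge 7$ and $w=f_n$. For $1\le i\le F_n-1$, write $\sigma^{i-1}(w)=xu'$ and $\sigma^i(w)=u'y$, where $u'$ is the common factor of length $F_n-1$. Both conjugates occur in $f_n^2$, which is a prefix of $f$. Since $\ALPH(u')=\{a,b\}$ for $n\ge 7$, Lemmas \ref{lem0} and \ref{lem01} apply. Hence Equation \eqref{equ2} gives the difference as $(|s|-|r|)$ computed for $\sigma^i(w)$, minus $(|p|-|q|)$ computed for $\sigma^{i-1}(w)$. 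Proposition \ref{prop1} is indexed by $u$ with $\sigma^{j}(w)=u^{-1}f_nu$ and $|u|=j$. So for each $i$ I read off the suffix quantity at $|u|=i$ and the prefix quantity at $|u|=i-1$.

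The case split follows the ranges of Proposition \ref{prop1}.
\begin{itemize}
\item For $1\le i\le F_{n-3}-2$, both $u$'s lie in Assertion (1), giving $F_{n-3}-F_{n-3}=0$.
\item For $i=F_{n-3}-1$, the suffix side uses Assertion (2), giving $F_{n-4}+1$, and the prefix side uses Assertion (1), giving $F_{n-3}$. The difference is $1-F_{n-5}$.
\item For $i=F_{n-3}$, the suffix side uses Assertion (3a) with $\alpha=\lambda$, giving $F_{n-4}$, and the prefix side uses Assertion (2), giving $F_{n-3}+1$.
\item The remaining items come the same way from Assertions (3a)/(3b), (4), (5), (6), (7a)--(7d) and (8).
\end{itemize}
At the transition points, such as $i=F_{n-1}+1$ or $i=F_{n-1}+F_{n-3}+1$, one side comes from the end of one range and the other from the start of the next. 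For example, at $i=F_{n-1}+|\alpha|$ with $2\le|\alpha|\le F_{n-4}-2$, Assertion (7a) gives $F_{n-2}-|\alpha|$ on the suffix side and $F_{n-3}+(|\alpha|-1)+2$ on the prefix side. Their difference is $F_{n-4}-2|\alpha|-1$. Each numerical simplification uses only $F_k=F_{k-1}+F_{k-2}$, e.g. $F_{n-4}-F_{n-3}=-F_{n-5}$ and $F_{n-1}-F_{n-3}=F_{n-2}$.

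Item (16) is separate, because $w_n$ is not a conjugate of $f_n$. Here I use Theorem \ref{imp1}. The word $v$ at position $F_{n+1}-2$ and $w_n$ at position $F_{n+1}-1$ are consecutive factors of length $F_n$, so Equation \eqref{equ1} applies with the common factor of length $F_n-1$. The expression $v=xyf_{n-2}u$ with $f_{n-1}=uxy$ identifies $v$ as the conjugate $u^{-1}f_nu$ with $|u|=F_{n-1}-2$. This $u$ falls under Assertion (4) of Proposition \ref{prop1}, giving $|p|-|q|=F_{n-3}$, which yields Equation \eqref{equ3}. Assertion (9) then gives $|s|-|r|=F_{n-3}$ for $w_n$, so the difference is $0$.

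The main obstacle is index bookkeeping. I must correctly match $|u|=i$ versus $|u|=i-1$ to the right assertion at every boundary, and confirm that the ranges of Proposition \ref{prop1} exactly tile $0\le|u|\le F_n-1$. This includes checking that $|u|=F_{n-1}+F_{n-2}-1=F_n-1$ corresponds to Assertion (8). I expect some stated items (e.g. (14)--(15), the case $i=F_n$) to need care, since $\sigma^{F_n}(w)=w$ cycles back to Assertion (1) on the suffix side. I will verify each boundary explicitly, and verify several items numerically for $n=7,8$ using Theorem \ref{Prethecl}-style direct counts as a sanity check.
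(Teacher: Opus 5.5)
Your proposal is correct and follows the paper's own derivation. Each item is read off from Proposition \ref{prop1} through Equation \eqref{equ2}, taking the suffix quantity at $|u|=i$ and the prefix quantity at $|u|=i-1$; item (16) comes from Theorem \ref{imp1} together with Assertion (4) for $v$ and Assertion (9) for $w_n$, via Equation \eqref{equ3}. Your separate handling of $i=F_n$ is also right: there $\sigma^{F_n}(w)=w$ falls back to Assertion (1) on the suffix side, and Equation \eqref{equ1} holds without any restriction on $i$.
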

Using Observation \ref{prop4}, we now calculate number of closed factors in any factor of $f$ of length $F_n$, $n \geq 7$. 
\begin{theorem}\label{nth}
     For $n \geq 7$, let $w=f_n$ and $\mathtt{Cl}(f_n) = m$. Then,
      \begin{enumerate}
        
        
        \item For $0 \leq  i \leq F_{n-3}-2$,   $\mathtt{Cl}(\sigma^i(w)) = m$.

        \item For $i=F_{n-3}-1$, $\mathtt{Cl}(\sigma^i(w)) = m + 1 - F_{n-5}$.



        \item For $F_{n-3} \leq  i \leq F_{n-2}-2$, $\mathtt{Cl}(\sigma^i(w)) = m - 2F_{n-5}$.

        \item For $i=F_{n-2}-1$, $\mathtt{Cl}(\sigma^i(w)) = m - 2F_{n-5} + 1+F_{n-5} =  m - F_{n-5} + 1$.



        \item For $F_{n-2} \leq  i \leq F_{n-1} - 2$, $\mathtt{Cl}(\sigma^i(w)) = m$.

        \item For $i=F_{n-1} - 1$, $\mathtt{Cl}(\sigma^i(w)) = m+ F_{n-4}$.

        \item For $i=F_{n-1}$, $\mathtt{Cl}(\sigma^i(w)) = m + 2F_{n-4}$.

        \item For $i=F_{n-1} +1$, $\mathtt{Cl}(\sigma^i(w)) = m + 3 F_{n-4} - 3$.

        \item For $i=F_{n-1} + |\alpha| $, where $1 < |\alpha| \leq F_{n-4}-2$, $\mathtt{Cl}(\sigma^i(w)) =  m + 3 F_{n-4} - 3 + F_{n-4} - 2 |\alpha| -1 = m + 4 F_{n-4} - 2 |\alpha| -4 $.





        \item For $F_{n-1} + F_{n-4} -1 \leq i \leq F_{n-1} + F_{n-3} $, $\mathtt{Cl}(\sigma^i(w)) = m + 2 F_{n-4}$.

        \item For $i = F_{n-1} + F_{n-3} +1 $, $\mathtt{Cl}(\sigma^i(w)) = m + 2 F_{n-4} + F_{n-4} -3 = m + 3 F_{n-4} -3$.

        \item For $i=F_{n-1} + |\gamma| $, where $F_{n-3} + 1 < |\gamma| \leq F_{n-2}-2$, $\mathtt{Cl}(\sigma^i(w)) =  m + 3 F_{n-4} -3 + F_{n-1} - 2 |\gamma| - 1 = m + 3 F_{n-4} + F_{n-1} - 2 |\gamma| - 4$.


        \item For $i = F_n -1$, $\mathtt{Cl}(\sigma^i(w)) = m + 3 F_{n-4} + F_{n-1} - 2 F_{n-2} +4 - 4 -F_{n-4} = m +2 F_{n-4} + F_{n-1} - 2 F_{n-2} $.

        \item For $ i = F_n$, $\mathtt{Cl}(\sigma^i(w)) = m +2 F_{n-4} + F_{n-1} - 2 F_{n-2} -F_{n-4} = m + F_{n-4} + F_{n-1} - 2 F_{n-2} = m + F_{n-4} + F_{n-3} - F_{n-2} = m $.

        \item Let $w_n$ be the singular word. Then $\mathtt{Cl}(w_n) =  \mathtt{Cl}(\sigma^{F_{n-1}-2}(w))= m $.        
     
    \end{enumerate}
     
\end{theorem}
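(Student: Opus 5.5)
The plan is a telescoping argument. Start from $\sigma^0(w)=f_n$, where $\mathtt{Cl}(\sigma^0(w))=m$ by definition. For $1\le i\le F_n$, write
\[
\mathtt{Cl}(\sigma^i(w)) = m + \sum_{j=1}^{i}\bigl(\mathtt{Cl}(\sigma^j(w))-\mathtt{Cl}(\sigma^{j-1}(w))\bigr).
\]
Each increment is supplied by Observation \ref{prop4}, which was derived from Equation \eqref{equ2} and Proposition \ref{prop1}. So the proof reduces to summing the increments block by block, in the order of the index ranges of Observation \ref{prop4}. Each assertion of the theorem is the cumulative sum at the right endpoint of, or inside, the corresponding block.

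The bookkeeping runs as follows.
\begin{itemize}
\item On $1\le i\le F_{n-3}-2$ every increment is $0$, which gives Assertion (1).
\item Adding $1-F_{n-5}$ and then $-F_{n-5}-1$ gives Assertions (2) and (3). The value stays constant on the zero block up to $F_{n-2}-2$.
\item Adding $1+F_{n-5}$ and then $F_{n-5}-1$ returns the value to $m$. This gives Assertions (4) and (5).
\item The two increments $F_{n-4}$ give Assertions (6) and (7). The increment $F_{n-4}-3$ gives Assertion (8).
\item For Assertion (9), sum $F_{n-4}-2j-1$ over $2\le j\le|\alpha|$. This is an arithmetic progression, and closing it gives $m+4F_{n-4}-2|\alpha|-4$ after simplification. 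I expect to recheck the exact constant here against the stated formula.
\item At $|\alpha|=F_{n-4}-2$ the value is $m+2F_{n-4}$. It is then constant on the zero block (Assertion (10)).
\item The block with $|\gamma|$ is handled the same way: one more increment $F_{n-4}-3$, then the progression $F_{n-1}-2j-1$, giving Assertions (11) and (12).
\end{itemize}

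The final two increments $-F_{n-4}$ give Assertions (13) and (14). The value at $i=F_n$ must equal $m$, because $\sigma^{F_n}(w)=w$. Checking this via $F_{n-1}-2F_{n-2}+F_{n-4}=F_{n-4}+F_{n-3}-F_{n-2}=0$ is a built-in consistency test of all the previous sums.

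For the singular word $w_n$, use the last item of Observation \ref{prop4}: $\mathtt{Cl}(w_n)=\mathtt{Cl}(v)$, where $v$ is the factor at position $F_{n+1}-2$. By Theorem \ref{imp1}, $v=xyf_{n-2}u$ with $f_{n-1}=uxy$. This is the conjugate $\sigma^{F_{n-1}-2}(f_n)$, since $f_n=f_{n-1}f_{n-2}=uxyf_{n-2}$ and shifting by $|u|=F_{n-1}-2$ gives $xyf_{n-2}u$. Assertion (5) then yields $\mathtt{Cl}(w_n)=m$. Together with the fact that $C(f_n)\cup\{w_n\}$ exhausts the $F_n+1$ factors of length $F_n$, this covers all factors.

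The main obstacle is bookkeeping at the block boundaries. The ranges in Observation \ref{prop4} (for instance $|\alpha|$ starting at $2$ versus $1<|\alpha|$, and the $\gamma$ range beginning at $F_{n-3}+2$) have to mesh exactly with those in Proposition \ref{prop1}, where $p,q$ refer to $\sigma^{i-1}$ and $s,r$ to $\sigma^i$. Off-by-one shifts in the arithmetic-progression endpoints would alter the constants. I would verify each endpoint by checking the result for a small case, say $n=7$, directly or against the Walnut computation, and use the closure identity at $i=F_n$ as a global check.
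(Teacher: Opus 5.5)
Your telescoping over Observation~\ref{prop4} is how the paper obtains Theorem~\ref{nth}. Assertions (1)--(8), (10), (11) and (13)--(15) come out as you describe, and your identification of the factor at position $F_{n+1}-2$ with $\sigma^{F_{n-1}-2}(f_n)$ is correct. The step that fails is the one you flagged: the arithmetic progression does not close to the stated linear formula. Summing the increments gives
\[
\sum_{j=2}^{|\alpha|}\bigl(F_{n-4}-2j-1\bigr)=(|\alpha|-1)\bigl(F_{n-4}-|\alpha|-3\bigr).
\]
So telescoping yields $\mathtt{Cl}(\sigma^{F_{n-1}+|\alpha|}(w))=m+3F_{n-4}-3+(|\alpha|-1)(F_{n-4}-|\alpha|-3)$. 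This exceeds the stated value $m+4F_{n-4}-2|\alpha|-4$ by $(|\alpha|-2)(F_{n-4}-|\alpha|-2)$. Likewise the $\gamma$-block gives $m+3F_{n-4}-3+(|\gamma|-F_{n-3}-1)(F_{n-2}-|\gamma|-3)$, which exceeds the stated value by $(|\gamma|-F_{n-3}-2)(F_{n-2}-|\gamma|-2)$. The formulas in (9) and (12) are the value at $i=F_{n-1}+1$ (resp.\ $i=F_{n-1}+F_{n-3}+1$) plus only the single last increment. That is how the statement is written in the paper, and it is correct only at the two ends of each block.

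So this cannot be fixed by rechecking constants: for every $n\ge 8$, Assertions (9) and (12) contradict Observation~\ref{prop4} whenever $3\le|\alpha|\le F_{n-4}-3$ (resp.\ $F_{n-3}+3\le|\gamma|\le F_{n-2}-3$). A direct check confirms the Observation. For $n=8$ one finds $|s|=34$, $|r|=16$ for $\sigma^{37}(f_8)$ and $|p|=34$, $|q|=17$ for $\sigma^{36}(f_8)$. By \eqref{equ1} this gives $\mathtt{Cl}(\sigma^{37}(f_8))-\mathtt{Cl}(\sigma^{36}(f_8))=1$, whereas (9) predicts $-2$.

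Your proposed safeguards would not catch this. For $n=7$ both blocks consist only of their endpoints ($|\alpha|\in\{2,3\}$, $|\gamma|\in\{10,11\}$). The closure check at $i=F_n$ is also blind to it, because the discrepancy vanishes at every block endpoint.

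What your argument actually proves is the corrected, quadratic version of (9) and (12). These values are concave in $|\alpha|$ (resp.\ $|\gamma|$), with endpoint values $m+4F_{n-4}-8$ and $m+2F_{n-4}$. They therefore stay above $m$, so Theorem~\ref{finthy} is unaffected.
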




  From Theorem \ref{nth}, choosing the minimum from all the cases, we obtain the minimum number of closed factors in any factor of $f$ of length $F_n$, $n \geq 7$:

\begin{theorem}\label{finthy}
    For $n \geq 7$, $\min \{ \mathtt{Cl}(w) ~ | ~ w \in \text{Fac}(f)~ \& ~ |w|=F_n\} = \mathtt{Cl}(f_n)-2 F_{n-5}$. 
\end{theorem}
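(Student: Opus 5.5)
The plan is to read the minimum directly off Theorem \ref{nth}. First I need that Theorem \ref{nth} covers every factor of $f$ of length $F_n$. The Fibonacci word is Sturmian, so it has exactly $F_n+1$ distinct factors of length $F_n$. As noted before Proposition \ref{prop1}, $F_n$ of these are the conjugates $\sigma^i(f_n)$, $0\le i\le F_n-1$, which all occur in the prefix $f_n^2$. The remaining one is the singular word $w_n$. Theorem \ref{nth} gives $\mathtt{Cl}$ of each of these in terms of $m=\mathtt{Cl}(f_n)$. Item (14), $i=F_n$, returns to $f_n$ and confirms consistency. Item (15) covers $w_n$. So the set $\{\mathtt{Cl}(w) : w\in Fac(f),\ |w|=F_n\}$ is exactly the set of values listed there.

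Next I compare the listed values against $m-2F_{n-5}$, using the Fibonacci recurrences and $n\ge 7$ (so $F_{n-5}\ge 1$).

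\textbf{Items (1), (5), (15):} the value is $m$, which exceeds $m-2F_{n-5}$.

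\textbf{Items (2) and (4):} the value is $m+1-F_{n-5}$. This exceeds $m-2F_{n-5}$ since $F_{n-5}+1>0$.

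\textbf{Item (3):} the value is $m-2F_{n-5}$ itself. It is attained, for instance at $i=F_{n-3}$; the range $F_{n-3}\le i\le F_{n-2}-2$ is nonempty because $F_{n-4}\ge 2$ for $n\ge 7$.

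\textbf{Items (6)--(8), (10), (11):} the values are $m$ plus a nonnegative quantity. Here one checks $3F_{n-4}-3\ge 0$.

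\textbf{Item (9):} $|\alpha|\le F_{n-4}-2$ gives $4F_{n-4}-2|\alpha|-4\ge 2F_{n-4}>0$.

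\textbf{Item (12):} $|\gamma|\le F_{n-2}-2$ gives $3F_{n-4}+F_{n-1}-2|\gamma|-4\ge 3F_{n-4}+F_{n-1}-2F_{n-2}=3F_{n-4}+F_{n-3}-F_{n-2}=2F_{n-4}\ge 0$.

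\textbf{Item (13):} the value is $m+2F_{n-4}+F_{n-1}-2F_{n-2}=m+F_{n-4}+F_{n-4}+F_{n-3}-F_{n-2}=m+F_{n-4}>m$.

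Hence the minimum is $m-2F_{n-5}$.

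The only delicate point is the completeness claim in the first step: that the $F_n$ conjugates together with $w_n$ exhaust the factors of length $F_n$. This relies on the Sturmian complexity $F_n+1$ and on the primitivity of $f_n$, which makes its conjugates pairwise distinct. Both facts are recalled in the paper just before Proposition \ref{prop1}. Everything else is routine verification of the inequalities above at the endpoints of each index range.
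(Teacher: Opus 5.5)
Your proposal is correct and follows the paper's own argument: the paper also proves Theorem \ref{finthy} just by taking the minimum over the cases of Theorem \ref{nth}. Your case-by-case comparisons, including the remark that the $F_n$ conjugates of $f_n$ together with $w_n$ exhaust the factors of length $F_n$, simply spell out that one-line step.
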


Now, to get the minimum number of closed factors in any factor of $f$, we need the following result.



\begin{proposition}\label{new1212}
 For $n \geq 7$,   let $v$ be a factor of $f$ such that $|v| = F_n + k$, where  $0 \leq k \leq F_{n-1}-1$. Let $s$ be the largest repeated suffix of $v$ and $r$ be the largest repeated suffix of $s$. Then,
    \begin{equation*}
|s|-|r| \geq
    \begin{cases}
        F_{n-4}, & \text{if } 0 \leq k \leq F_{n-3}-1\\
         F_{n-3}, & \text{if } F_{n-3} \leq k \leq F_{n-1}-1.
    \end{cases}
\end{equation*}
\end{proposition}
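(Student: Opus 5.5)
We reduce the statement to the length-$F_n$ case treated in Proposition \ref{prop1}, using the fact that $|s|-|r|$ counts the number of distinct lengths of longest borders of new closed factors ending at the last position (Lemma \ref{lem0}).

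The key structural fact is the following. If $|s|-|r|=d$, the closed suffixes of $v$ whose longest border has length in $(|r|,|s|]$ are exactly the new ones. Equivalently, $d$ is determined by comparing the rightmost occurrence of $s$ before the end with the rightmost earlier occurrence of $r$.

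First we make the value of $|s|$ explicit. Write $v=z\beta$, where $z$ is the prefix of $v$ of length $F_n$ and $|\beta|=k$. The factor $z$ is one of the words listed in Proposition \ref{prop1}: a conjugate $u^{-1}f_nu$ or the singular word $w_n$. Since $f$ is Sturmian, the extension of $z$ to the right inside $f$ is forced except at a single special position. So $v$ is a factor of $f_nf_nf_{n-1}$ (or of the neighbourhood of $w_n$ given by Theorem \ref{imp1}), and $v=\sigma^i(f_nf_{n-1}\cdots)$-type words can be parametrised by the same offset $|u|$ as in Proposition \ref{prop1}.

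Next, for each range of $|u|$ we identify $s$. Because $f_n$ and $f_{n-1}$ are periods of long factors of $f$, and by Lemma \ref{lem11}, the largest repeated suffix $s$ of $v$ is the suffix ending at the last letter whose earlier occurrence is shifted by a period $P\in\{F_{n-3},F_{n-2},F_{n-1},F_n\}$. It has length roughly $|v|-P$ minus a boundary correction, as in Lemma \ref{newlemcombine}.

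Similarly, $r$ is obtained from $s$ by shifting with the next smaller admissible period $P'$ of $s$. The difference $|s|-|r|$ is then, up to the boundary corrections seen in Proposition \ref{prop1}, a Fibonacci number $F_j$ equal to the difference of the two relevant periods or to one of them.

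The lower bound itself comes from a period argument. The largest repeated suffix $r$ of $s$ has a second occurrence inside $s$ at distance at least the smallest period of $s$ that is realised. Since $|s|\ge F_{n-2}$ (respectively $|s|\ge F_{n-1}$ when $k\ge F_{n-3}$), Fine and Wilf together with the fact that $f$ contains no long factor with period smaller than $F_{n-4}$ (resp. $F_{n-3}$) beyond length about $F_{n-2}+F_{n-4}$ (resp. $F_{n-1}+F_{n-3}$) gives $|s|-|r|\ge F_{n-4}$ (resp. $F_{n-3}$).

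Concretely, we show that $s$ and $r$ satisfy $s=xr$ with $r$ a prefix of $s$ as well, so that $|x|$ is a period of $s$. We then use the standard fact that a factor of $f$ of length at least $F_{m}+F_{m-1}-2$ has no period less than $F_{m-1}$. The case $k\ge F_{n-3}$ gives $|s|\ge F_{n-1}+F_{n-3}-2$-type lengths, hence period at least $F_{n-3}$. Case $k<F_{n-3}$ gives only length about $F_{n-2}+F_{n-4}$, hence period at least $F_{n-4}$.

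The main obstacle is the lower estimate on $|s|$ in terms of $k$ uniformly over all $v$, including the degenerate offsets (the analogues of Assertions (2), (3b), (6), (8) and (9) of Proposition \ref{prop1}, and the singular word), where $s$ is shorter than expected by one or two letters. We plan to handle these by direct inspection using Lemma \ref{lem11}, Assertions (\ref{impt6}) and (\ref{impt7}), checking that the short $s$ still has length large enough for the period bound. The small cases near the thresholds $k=F_{n-3}-1$ and $k=F_{n-3}$ will be verified separately.
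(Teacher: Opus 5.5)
Your proposal has a genuine gap: both the key step and the length estimates it rests on are false.

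\textbf{The border claim fails.} You plan to show that $r$ is also a prefix of $s$, so that $|s|-|r|$ is a period of $s$, and then to bound that period from below using the length of $s$. But $r$ need not be a border of $s$. Take $n=7$ and let $v$ be the factor of $f$ of length $F_7+F_4=42$ starting at position $2$, so $k=F_{n-3}$.
\begin{itemize}
\item Its largest repeated suffix is $s=baababaabaababaababaa$, which is also the prefix of $v$ of length $21$.
\item The largest repeated suffix of $s$ is $r=abaababaa$, occurring in $s$ at positions $8$ and $13$.
\item So $|s|-|r|=12$, and $r$ is not a prefix of $s$, which begins with $b$.
\item $12$ is not a period of $s$; its smallest period is $13$.
\item The two occurrences of $r$ are at distance $5$, smaller than every period of $s$. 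This also refutes your remark that the second occurrence of $r$ lies at distance at least the smallest period of $s$.
\end{itemize}
In general the inequality goes the other way. The longest border of $s$ is itself a repeated suffix of $s$, so $|s|-|r|\le p$ for the smallest period $p$ of $s$. Lower bounds on periods of $s$ transfer to $|s|-|r|$ only when $r$ happens to be a border. What is true is that $r$ reoccurs at some distance $d\le|s|-|r|$. So the periodic object is the span of two occurrences of $r$, of length $|r|+d$, and you would need lower bounds on $|r|$, not on $|s|$.

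\textbf{The length bounds fail too.} Again with $n=7$, take the factor of length $42$ starting at position $28$ of $f$, namely $v=baabaab\,f_4f_3f_4f_4f_3\,a$.
\begin{itemize}
\item Its largest repeated suffix is $s=abaababaabaaba$, which also occurs at position $8$ of $v$. Its length is $14=F_5+1<F_6$, contradicting your claim $|s|\ge F_{n-1}$ for $k\ge F_{n-3}$.
\item Here $r=abaaba$, so $|s|-|r|=8=F_4$. The bound of the proposition is attained with $|s|=F_{n-2}+1$.
\item $f$ contains factors of length $16$ with period $5=F_3$, for example positions $9$ to $24$. So no argument using only the length of $s$ can exclude period $F_{n-4}$ here; you need the specific structure.
\end{itemize}

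\textbf{The missing idea} is the one the paper uses: prepending a letter never decreases $|s|-|r|$. Write $v=xu$ and let $s_1,r_1$ be the corresponding words for $u$. Then either $s=s_1$ and $r=r_1$, or $s$ is a prefix of $v$ with $|s|=|s_1|+1$ and then $|r_1|\le|r|\le|r_1|+1$. In both cases $|s|-|r|\ge|s_1|-|r_1|$. Consequently:
\begin{itemize}
\item $|s|-|r|$ for $v$ is at least its value for the suffix of $v$ of length $F_n$, where Proposition~\ref{prop1} gives minimum $F_{n-4}$.
\item When $k\ge F_{n-3}$, it is at least its value for the suffix of length $F_n+F_{n-3}$. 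There the paper enumerates all $F_n+F_{n-3}+1$ factors of that length and finds minimum $F_{n-3}$.
\end{itemize}
Parametrizing $v$ by its length-$F_n$ prefix and extending to the right, as you propose, forces you to recompute $s$ and $r$ for every $k$. Extending to the left is what makes the two base lengths suffice.
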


\begin{proof}
    We prove this by induction on $k$.
    \begin{itemize}
        \item Consider $0 \leq k \leq F_{n-3}-1$ : The statement is true for $k = 0$ by Proposition \ref{prop1}. Let the statement be true for factors of length $F_n + l$, where $l \leq k-1$.  Let $v$ be a factor of $f$ such that $|v| = F_n + k$, where $k \geq 1$.
    For some $x \in \Sigma$ and $ u \in \Sigma^+$, consider $v=x u$. Let $s_1$ be the largest repeated suffix of $u$ and $r_1$ be the largest repeated suffix of $s_1$. Then as $F_n \leq |u| < F_n + k$, by induction we have, $|s_1|-|r_1| \geq F_{n-4}$.
     Let $s$ be the largest repeated suffix of $v$ and $r$ be the largest repeated suffix of $s$. 
    Since $u$ is a suffix of $v$, $|s| \geq |s_1|$. Then, we have following cases:
    \begin{itemize}
        \item  If $|s|=|s_1|$, then as $u$ is a suffix of $v$, $s=s_1$. This implies $r=r_1$. Thus, $|s|-|r| \geq F_{n-4}$.

        \item   Let $|s|>|s_1|$. Then  as $s_1$ is the largest repeated suffix of $u$ and $v=xu$, $s$ must be the prefix of $v$.
    If possible, let $|s|-|s_1| \geq 2$. Then $x^{-1} s$ is the largest repeated suffix of $u$ which is a contradiction as $|x^{-1}s|-|s_1| \geq 1$. Thus, $|s|-|s_1| =1$. Since $s_1$ is a suffix of $s$, $|r| \geq |r_1|$. If $|r|= |r_1|$, then $|s|-|r| = |s_1|+1 - |r_1| > F_{n-4}$.
    Let $|r|>|r_1|$.  Then  as $r_1$ is the largest repeated suffix of $s_1$, $|s_1|_{r}=1$. This implies $r$ is a prefix of  $s$, i.e.,  $r$ is a prefix of  $v$. If possible, let $|r|-|r_1| \geq 2$. Then $x^{-1} r$ is the largest repeated suffix of $s_1$ which is a contradiction as $|x^{-1}r|-|r_1| \geq 1$. Thus, $|r|-|r_1| =1$.
    Therefore, $|s|-|r| = |s_1|+1 - |r_1|-1 = |s_1|-|r_1| \geq F_{n-4}$.
    \end{itemize}

        \item Consider $F_{n-3} \leq k \leq F_{n-1}-1$. 
        
        First we prove the statement for $k = F_{n-3}$. 


    For $n \geq 7$, let $v$ be a factor of $f$ of length $ F_n + F_{n-3}$. Let  $s$ be the largest repeated suffix of $v$ and $r$ be the largest repeated suffix of $s$.
  First, we describe the structure of $F_n + F_{n-3} + 1$ distinct factors of $f$ of length  $F_n + F_{n-3}$.
Now, 
    $f_n f_{n-1} f_n f_{n-1}= f_n f_n g_{n-1} f_{n-1} = f_n f_{n-3} f_{n-4} f_{n-3} f_{n-1} f_{n-2} f_{n-1}$ is a prefix of $f$. Then as $f_n f_n^{-}$ contains all distinct conjugates of $f_n$, all words in the form $u^{-1} f_n f_{n-3} u' $ are distinct factors of $f$ of length $F_n + F_{n-3}$, where $0 \leq |u|=|u'| \leq F_n-1$ and $u' \in Pref(f_{n-4} f_{n-3} f_{n-1} f_{n-2} f_{n-1})$. This gives us $F_n$ distinct factors of $f $ of length $F_n + F_{n-3}$.

    We now show that we can represent the remaining $F_{n-3}+1$ distinct factors of $f$ of length $F_n + F_{n-3}$ in the form $a_1 f_{n-3} f_{n-2} g_{n-1} a_1^{-1}$ and $q_1 f_{n-2} g_{n-1} q_2$,
    where $a_1 \in \Sigma \cap Suff(f_{n-2})$, $q_1 \in Suff(f_{n-3})$, $q_2 \in Pref(f_{n-1})$, $1 \leq |q_1| \leq F_{n-3}$, $0 \leq |q_2| \leq F_{n-3}-1$ and $|q_1|+|q_2| = F_{n-3}$.
    
    We have $f_n f_n g_{n-1} a_1^{-1} =f_n f_{n-2} f_{n-3} f_{n-2} g_{n-1} a_1^{-1} = f_n f_{n-1} f_n a_1^{-1} $ and $a_1 f_{n-3} f_{n-2} g_{n-1} a_1^{-1}=a_1 f_{n-3} f_n a_1^{-1}$. Then, by Assertions (\ref{impt2}), (\ref{impt3}) (\ref{impt6}), (\ref{impt7})   of Lemma \ref{lem11}, we have
    \begin{equation}\label{cxuh1}
        |f_n f_{n-1} f_n a_1^{-1}|_{a_1 f_{n-3} f_n a_1^{-1}}=1.
    \end{equation}
    We have $f_n f_n g_{n-1} q_2=f_n f_{n-2} f_{n-3} f_{n-2} g_{n-1} q_2 = f_n f_{n-1} f_n q_2 $ and $q_1 f_{n-2} g_{n-1} q_2=q_1 f_n q_2$.
    Then by Assertions (\ref{impt2}), (\ref{impt3}), (\ref{impt4})  of Lemma \ref{lem11}, we have 
    \begin{equation}\label{cxuh2}
        |f_n f_{n-1} f_n q_2|_{q_1 f_n q_2}=1  \text{ for each pair } q_1, q_2.
    \end{equation}
   
    Thus, (\ref{cxuh1}) and (\ref{cxuh2}) give us the desired representation of the remaining $F_{n-3}+1$ factors of length $F_n + F_{n-3}$.

    Now, for each of these distinct factors of $f$ of length $F_n + F_{n-3}$, we  calculate $|s|-|r|$ in the following:
    \begin{enumerate}
        \item Consider $|u|=0$ : Then, $v=u^{-1} f_n f_{n-3} u'=f_n f_{n-3} = f_{n-1} f_{n-1}$. As $f_{n-1}$ does not occur internally in $f_{n-1}f_{n-1}$, $s=f_{n-1}$. Then {by Lemma \ref{newlemcombine}}, $r=f_{n-3}$. So, $|s|-|r| = F_{n-2}$.

        \item Consider $1 \leq |u| \leq F_{n-4}$ : Then, $v=u^{-1} f_n f_{n-3} u'=  y f_{n-5} f_{n-4} f_{n-4} f_{n-5} f_{n-4} f_{n-5} f_{n-4} f_{n-4} f_{n-5} u$, where $f_{n-4}= uy$, where $0 \leq |y| \leq F_{n-4}-1$. Then, by Assertions (\ref{impt2}), (\ref{impt3}), (\ref{impt4})  of Lemma \ref{lem11}, $s=y f_{n-5} f_{n-4} f_{n-4} f_{n-5} u $. Now, we have the following cases:
        \begin{itemize}
            \item If $1 \leq |u| \leq F_{n-4}-2 $, then by Assertions (\ref{impt2}), (\ref{impt3}), (\ref{impt4})  of Lemma \ref{lem11}, $r=f_{n-4} f_{n-5} u$. Thus, $|s|-|r|= F_{n-3}+ |y|$, where $2 \leq |y| \leq F_{n-4}-1$.
            
            \item If $|u| = F_{n-4}-1$, then $u=f_{n-4} x_1^{-1}$, where $x_1 \in \Sigma$. This implies $s=x_1 f_{n-5} f_{n-4} f_{n-4} f_{n-5} f_{n-4} x_1^{-1} $. Then, by Assertions  (\ref{impt2}), (\ref{impt6}), (\ref{impt7}) of Lemma \ref{lem11}, $r=x_1 f_{n-5} f_{n-4} x_1^{-1}$. Thus, $|s|-|r|=F_{n-2}$.
            
            \item If $|u| = F_{n-4}$, then similar to the case $|u| = F_{n-4}-1$, $r= f_{n-5} f_{n-4} $. Thus, $|s|-|r|=F_{n-2}$. 
            
        \end{itemize}

        \item Consider $F_{n-4}+1 \leq |u| \leq F_{n-4} + F_{n-3}-2$ : Then, $v=u^{-1} f_n f_{n-3} u'= y f_{n-3} f_{n-3} f_{n-4} f_{n-3} f_{n-4} u_1 $, where $u'=f_{n-4} u_1$, $y \in Suff(g_{n-3})$, $2 \leq |y| \leq F_{n-3}-1$ and $u_1 \in Pref(f_{n-3})$. Then,  by Assertions  (\ref{impt2}), (\ref{impt3}), (\ref{impt4})  of Lemma \ref{lem11}, $s=y f_{n-3} f_{n-4} u_1$. Now, we have the following cases:
        
            \begin{itemize}

                \item If $2 \leq |y| \leq F_{n-4} $, then as $s=y f_{n-4} f_{n-5} f_{n-4} u_1$,  Assertions (\ref{impt2}), (\ref{impt3}), (\ref{impt4})   of Lemma \ref{lem11} gives $r=f_{n-4} u_1$. $|s|-|r|= |y| + F_{n-3}$.
            
                \item If $F_{n-4}+1 \leq |y| \leq F_{n-3}-1 $, then as $g_{n-3}=f_{n-5} f_{n-4}$, $y=t_1 f_{n-4}$, where $t_1 \in \Sigma^+ \cap Suff(f_{n-5})$. Then, $s=t_1 f_{n-4} f_{n-4} f_{n-5} f_{n-4} u_1$. 
                Then by Assertions  (\ref{impt2}), (\ref{impt3}), (\ref{impt4})  of Lemma \ref{lem11}, $r=t_1 f_{n-4} u_1$. Thus,  $|s|-|r|=F_{n-2}$. 
                
            \end{itemize}

        \item Consider $F_{n-4} + F_{n-3}-1  \leq |u| \leq  F_{n-4} + F_{n-3}$ : Let $|u| = F_{n-4} + F_{n-3}-1  $. Then, $v=u^{-1} f_n f_{n-3} u'= x_1 f_{n-3} f_{n-3} f_{n-4} f_{n-3} f_{n-4} f_{n-3} x_2^{-1}$, where $x_1, x_2 \in \Sigma$. Then, by Assertions   (\ref{impt2}), (\ref{impt3}), (\ref{impt4}) of Lemma \ref{lem11}, $s=f_{n-3} f_{n-4} f_{n-3} x_2^{-1}= f_{n-1} x_2^{-1} $. Then, by  Lemma \ref{newlemcombine}, $r=f_{n-3} x_2^{-1} $. Thus, $|s|-|r|=F_{n-2}$.\\
        Similarly, when $|u| = F_{n-4} + F_{n-3}$, then $s= f_{n-1}$, $r=f_{n-3} $ and $|s|-|r|=F_{n-2}$.

        \item Consider $F_{n-2}+1 \leq |u| \leq F_{n-1}-1$ : Then, $v=u^{-1} f_n f_{n-3} u'= t f_{n-2} f_{n-3} f_{n-4} f_{n-3} u_2 $, where $t \in \Sigma^+ \cap Suff(f_{n-3})$, $u_2 \in Pref(f_{n-3})$. Then, by Assertions   (\ref{impt2}), (\ref{impt3}), (\ref{impt4})   of Lemma \ref{lem11}, $s= f_{n-3} f_{n-4} f_{n-3} u_2$ and $r=f_{n-3} u_2$. Thus, $|s|-|r|=F_{n-2}$.

        \item Consider $|u| = F_{n-1}$ : Then, $v=u^{-1} f_n f_{n-3} u'=  f_{n-3}  f_{n-4}  f_{n-3}  f_{n-4}  f_{n-3}  f_{n-3}$. Then, by Assertions  (\ref{impt2}), (\ref{impt3}), (\ref{impt4}) of Lemma \ref{lem11}, $s= f_{n-3} f_{n-4} f_{n-3} f_{n-3}$ and $r=f_{n-3} f_{n-3}$. Thus, $|s|-|r|=F_{n-2}$. 

        \item Consider $F_{n-1} +1 \leq |u| \leq F_{n-1} + F_{n-4} -2$ : Then,  $v=u^{-1} f_n f_{n-3} u'\\
        = \delta  f_{n-5}  f_{n-4}  f_{n-4}  f_{n-5}  f_{n-4}  f_{n-4} f_{n-5}  f_{n-4} f_{n-5}  \gamma$, where $f_{n-4}  = \gamma \delta$, $1 \leq |\gamma| \leq F_{n-4}-2$. Then, by Assertions   (\ref{impt2}), (\ref{impt3}), (\ref{impt4})  of Lemma \ref{lem11}, $s= \delta  f_{n-5}  f_{n-4}  f_{n-4} f_{n-5}  f_{n-4} f_{n-5}  \gamma$ and $r=\delta f_{n-5}  f_{n-4} f_{n-5}  \gamma$. Thus, $|s|-|r|=F_{n-2}$.

        \item Consider $F_{n-1} + F_{n-4} -1 \leq |u| \leq F_{n-1} + F_{n-4} $ : Let $|u| = F_{n-1} + F_{n-4} -1$. Then,  $v=u^{-1} f_n f_{n-3} u'= x_3  f_{n-5}  f_{n-4}  f_{n-4}  f_{n-5}  f_{n-4}  f_{n-4} f_{n-5}  f_{n-4} f_{n-5}  f_{n-4} x_2^{-1}$, where $x_2 \in \Sigma \cap Suff(f_{n-4})$. Then,   by Assertions (\ref{impt2}), (\ref{impt3}), (\ref{impt4}) of Lemma \ref{lem11}, $s= f_{n-4} f_{n-5}  f_{n-4} x_2^{-1} $ and $r= f_{n-4}  x_2^{-1}$. Thus, $|s|-|r|=F_{n-3}$. Similarly, when $|u| = F_{n-1} + F_{n-4}$, then $s= f_{n-4} f_{n-5}  f_{n-4} $ and $r= f_{n-4} $. Thus, $|s|-|r|=F_{n-3}$.

        \item Consider  $F_{n-1} + F_{n-4} +1 \leq |u| \leq F_n -2$ : Then,  $v=u^{-1} f_n f_{n-3} u'= \delta_1  f_{n-3}  f_{n-4}  f_{n-3}  f_{n-3}  f_{n-4}   \gamma_1$, where $g_{n-3}  = \gamma_1 \delta_1$, $1 \leq |\gamma_1| \leq F_{n-3}-2$.  Then,  by Assertions (\ref{impt2}), (\ref{impt3}), (\ref{impt4})  of Lemma \ref{lem11}, $s=  f_{n-3} f_{n-4}  \gamma_1$ and $r=  f_{n-4}  \gamma_1$. Thus, $|s|-|r|=F_{n-3}$.

        \item Consider $|u|=F_n -1$ : Then, $v=u^{-1} f_n f_{n-3} u'= x_3 f_{n-1} f_{n-1} x_3^{-1}$, where $x_3 \in \Sigma$. Then, $s=  f_{n-1} x_3^{-1}$ and $r=  f_{n-3}  x_3^{-1}$. Thus, $|s|-|r|=F_{n-2}$.

        \item Consider $v = a_1 f_{n-3} f_{n-2} g_{n-1} a_1^{-1}$, where $a_1 \in \Sigma \cap Suff(f_{n-2})$ : Then,  by Assertions (\ref{impt2}), (\ref{impt3}), (\ref{impt4})   of Lemma \ref{lem11}, $s=  a_1 f_{n-3} f_{n-2}  a_1^{-1}$ and $r=  f_{n-2}  a_1^{-1}$. Thus, $|s|-|r|=F_{n-3}+1$.

        \item Consider $v = q_1 f_{n-2} g_{n-1} q_2$, where $q_1 \in Suff(f_{n-3})$, $q_2 \in Pref(f_{n-1})$, $1 \leq |q_1| \leq F_{n-3}$, $0 \leq |q_2| \leq F_{n-3}-1$ and $|q_1|+|q_2| = F_{n-3}$ : Then, $v = q_1 f_{n-2} g_{n-1} q_2 = q_1 f_{n-2} f_{n-3}   f_{n-2} q_2$. Then,  by Assertions (\ref{impt2}), (\ref{impt3}), (\ref{impt4})   of Lemma \ref{lem11}, $s=  q_1 f_{n-2} q_2$. We have the following cases:
        \begin{itemize}
            \item If $0 \leq |q_2| \leq F_{n-3}-2$:  Then, by Assertions (\ref{impt2}), (\ref{impt3}), (\ref{impt4})  of Lemma \ref{lem11}, $r=q_1 f_{n-4} q_2$. Thus,  $|s|-|r|=F_{n-3}$.
            \item If $ |q_2| = F_{n-3}-1$:  Then,  by Assertions (\ref{impt2}), (\ref{impt3}), (\ref{impt4}) of Lemma \ref{lem11}, $r=f_{n-3} a_2^{-1}$, where $a_2 \in \sigma$. Thus,  $|s|-|r|=F_{n-2} +1$.
        \end{itemize}   
    \end{enumerate}

      Similar to the inductive steps of the case $0 \leq k \leq F_{n-3}-1$, we now prove that for $k>F_{n-3}$,  $|s|-|r| \geq F_{n-3}$.

\end{itemize}
\end{proof}

From Theorem \ref{finthy}, we know that  $$\min \{ \mathtt{Cl}(w) ~ | ~ w \in \text{Fac}(f)~ \& ~ |w|=F_n\} = \mathtt{Cl}(f_n)-2 F_{n-5}$$ for $n \geq 7.$ Then,
from Lemma \ref{lem0} and Proposition \ref{new1212}, it is clear that if  $v$ is a factor of $f$ of length $F_n+k$, where  $n \geq 7$, $0 \leq k \leq F_{n-3}-1$, then $\mathtt{Cl}(v) \geq \mathtt{Cl}(f_n) - 2 F_{n-5} + k F_{n-4}$. 
Similarly, from Lemma \ref{lem0} and Proposition \ref{new1212}, we have, if  $v$ is a factor of $f$ of length $F_n+k$, where $F_{n-3} \leq k \leq F_{n-1}-1$, then $\mathtt{Cl}(v) \geq
\mathtt{Cl}(f_n) - 2 F_{n-5} + (F_{n-3}-1) F_{n-4} + (k-F_{n-3}+1) F_{n-3}$. Thus, we have the following:

\begin{theorem}\label{Finalth}
    Let $v$ be a factor of $f$ of length $F_n+k$, where $0 \leq k \leq F_{n-1}-1$ and $n \geq 7$. Then,
    \begin{equation*}
\mathtt{Cl}(v) \geq
    \begin{cases}
        \mathtt{Cl}(f_n) - 2 F_{n-5} + k F_{n-4}, & \text{if } 0 \leq k \leq F_{n-3}-1\\
        \mathtt{Cl}(f_n) - 2 F_{n-5} + (F_{n-3}-1) F_{n-4} + (k-F_{n-3}+1) F_{n-3}, & \text{if } F_{n-3} \leq k \leq F_{n-1}-1.
    \end{cases}
\end{equation*}
\end{theorem}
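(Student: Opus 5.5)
Theorem \ref{Finalth} follows by building $v$ from a factor of length $F_n$ one letter at a time, appending on the right. Let $v$ be a factor of $f$ of length $F_n+k$, and write $v = v_0 y_1 y_2 \cdots y_k$, where $v_0 = Pref_{F_n}(v)$ and $y_j\in\Sigma$. Put $v_j = v_0y_1\cdots y_j$. Each $v_j$ is a factor of $f$ of length $F_n+j$.

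First, Theorem \ref{finthy} gives the base case $\mathtt{Cl}(v_0)\geq \mathtt{Cl}(f_n)-2F_{n-5}$. Since $n\geq 7$, $v_0$ contains both letters $a$ and $b$, so $\ALPH(v_{j-1})=\ALPH(v_j)$ for every $j\geq 1$. Lemma \ref{lem0} therefore applies to each step $v_{j-1}\to v_j$. It gives $\mathtt{Cl}(v_j)-\mathtt{Cl}(v_{j-1}) = |s_j|-|r_j|$, where $s_j$ is the largest repeated suffix of $v_j$ and $r_j$ is the largest repeated suffix of $s_j$.

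Next, apply Proposition \ref{new1212} to $v_j$, whose length is $F_n+j$ with $1\le j\le F_{n-1}-1$. It yields $|s_j|-|r_j|\geq F_{n-4}$ when $j\leq F_{n-3}-1$, and $|s_j|-|r_j|\ge F_{n-3}$ when $F_{n-3}\le j\le F_{n-1}-1$.

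Finally, sum the telescoping differences. If $k\le F_{n-3}-1$, all $k$ increments are at least $F_{n-4}$, which gives $\mathtt{Cl}(v)\geq \mathtt{Cl}(f_n)-2F_{n-5}+kF_{n-4}$. If $k\geq F_{n-3}$, the increments with $j=1,\dots,F_{n-3}-1$ contribute at least $(F_{n-3}-1)F_{n-4}$. The remaining increments with $j=F_{n-3},\dots,k$ number $k-F_{n-3}+1$ and contribute at least $(k-F_{n-3}+1)F_{n-3}$. This is exactly the second case of the theorem.

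All the substantive work sits in Proposition \ref{new1212} and Theorem \ref{finthy}, so nothing here is a real obstacle. The points to check are that the index ranges line up with the proposition's cases, and that $k=0$ falls in the first case and reduces to Theorem \ref{finthy}.
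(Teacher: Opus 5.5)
Your proposal is correct and is essentially the paper's own argument. You take the bound $\mathtt{Cl}(f_n)-2F_{n-5}$ from Theorem \ref{finthy} as the base case, extend to the right one letter at a time with Lemma \ref{lem0} identifying each increment as $|s|-|r|$, bound that increment by $F_{n-4}$ or $F_{n-3}$ via Proposition \ref{new1212}, and sum. Your explicit check that $\ALPH(v_{j-1})=\ALPH(v_j)$, which Lemma \ref{lem0} requires, is a detail the paper leaves implicit.
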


\subsection{Lower and Upper bound of closed-rich constant of $f$}\label{sec6}

In \cite{parshina2024finite}, it was proved that $f$ is an infinite closed-rich word. We provide the following lower and upper bounds for its closed-rich constant:
\begin{theorem}\label{fibthfg}
     The following bounds hold for the closed rich constant $C_f$ of the Fibonacci word $f$:
     \[  \frac{\phi^3+3}{\phi^9} < C_f  \leq  \frac{5\phi+3}{\phi^2(\phi^3+2)^2}, \]
    where $\phi = \frac{1+\sqrt{5}}{2}$.
\end{theorem}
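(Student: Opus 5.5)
The plan is to derive both bounds from the explicit counts already obtained: Theorem \ref{Prethecl} gives $\mathtt{Cl}(f_n)$, Theorem \ref{finthy} gives the minimum over factors of length exactly $F_n$, and Theorem \ref{Finalth} gives a lower bound for factors of intermediate length.

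\textbf{Lower bound.} Taking $k=0$ in Theorem \ref{Prethecl} gives $\mathtt{Cl}(f_n)=1+F_{n-3}(F_{n-2}+F_{n-4}+2)$. By Theorem \ref{finthy}, every factor of length $F_n$ has at least $F_{n-3}(F_{n-2}+F_{n-4})+2F_{n-3}-2F_{n-5}+1$ closed factors. Write a general factor length as $F_n+k$ with $0\le k\le F_{n-1}-1$ and set $x=k/F_n\in[0,1/\phi)$. Inserting the exact quadratic expressions from Theorem \ref{Finalth} and using $F_{n-j}=\phi^{-j}F_n+O(1)$, the lower bound for $\mathtt{Cl}(v)/|v|^2$ has the following leading term:
\begin{itemize}
\item on the first branch ($x\le \phi^{-3}$), $(\phi^{-5}+\phi^{-7}+x\phi^{-4})/(1+x)^2$;
\item on the second branch, a similar rational function, with slope $\phi^{-3}$ in the numerator after $x=\phi^{-3}$.
\end{itemize}
Elementary calculus should show that each branch is decreasing in $x$. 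The infimum is then approached at the right endpoint $x\to 1/\phi$, i.e.\ at lengths just below $F_{n+1}$. Evaluating there with $\phi^2=\phi+1$ should give exactly $(\phi^3+3)/\phi^9\approx 0.09519$.

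\textbf{Strictness and small cases.} To get the strict inequality, I will keep the exact integer expressions rather than their asymptotics. The lower-order terms are positive: roughly $2F_{n-3}-2F_{n-5}+1$, and the correction coming from $F_{n-j}$ versus $\phi^{-j}F_n$ has a sign that alternates with parity, which I must check. So for every $n\ge 7$ the ratio should stay strictly above the limit value. Lengths below $F_7$, a finite set, I would check directly, using Theorem \ref{Prethecl} together with a finite enumeration, for instance by the Walnut predicates of the earlier remark.

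\textbf{Upper bound.} Here I would exhibit an explicit family of factors whose ratio tends to $(5\phi+3)/(\phi^2(\phi^3+2)^2)\approx0.10893$. The natural candidates are the minimizing conjugates $\sigma^i(f_n)$ with $F_{n-3}\le i\le F_{n-2}-2$ (Theorem \ref{nth}), extended on one side, or on both sides, by a suitably chosen number of letters. The factor $(\phi^3+2)$ in the denominator suggests a total length of about $(\phi^3+2)F_{n-3}=F_n+2F_{n-3}$. For these specific words I would compute the increments $|s|-|r|$ exactly, not just bound them, using the case analysis in the proof of Proposition \ref{new1212} via Lemma \ref{lem0} and Lemma \ref{lem01}. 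I expect the increment to equal $F_{n-4}$ on the first stretch and $F_{n-3}$ afterwards. Summing these exact counts and dividing by the squared length then gives the limiting ratio, which should simplify to the stated constant.

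The main obstacle is this upper-bound step. One has to find the right extension of the right minimizing conjugate, and then prove equality in the increments along it: the largest repeated suffix of the extended word must not jump. That requires redoing the occurrence analysis of Proposition \ref{prop1} for these particular words, relying on Lemma \ref{lem11}. The lower-bound step, by contrast, is calculus plus bookkeeping of lower-order terms.
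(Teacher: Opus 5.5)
Your lower-bound argument follows the paper. It inserts the bounds of Theorem~\ref{Finalth} into the two branches $(\phi^{-5}+\phi^{-7}+l\phi^{-4})/(1+l)^2$ and $(3\phi^{-7}+l\phi^{-3})/(1+l)^2$, takes the minimum at the endpoint $l=1/\phi$, controls the lower-order error terms, and uses Table~\ref{tnb1} below $F_7$. There is one caveat, which the paper's write-up shares. Showing that every individual ratio exceeds $(\phi^3+3)/\phi^9$ does not make the infimum strictly larger, because the bound of Theorem~\ref{Finalth} at $k=F_{n-1}-1$ tends to exactly this value. As written, this route only gives $C_f\ge(\phi^3+3)/\phi^9$.

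The upper bound is where your plan has a genuine gap. You expect to start from $\mathtt{Cl}(f_n)-2F_{n-5}$, then gain $F_{n-4}$ per letter for about $F_{n-3}$ letters and $F_{n-3}$ per letter afterwards. That profile is just the worst case of Proposition~\ref{new1212}. Summed to length $F_n+2F_{n-3}$, it gives about $F_{n-3}(2F_{n-3}+3F_{n-4})$ closed factors. That is the ratio $(2\phi^{-6}+3\phi^{-7})/(1+2\phi^{-3})^2\approx 0.0991$. The stated constant $\phi^3/(\phi^3+2)^2=\frac{5\phi+3}{\phi^2(\phi^3+2)^2}$ corresponds instead to $F_{n-3}F_n=F_{n-3}(3F_{n-3}+2F_{n-4})$. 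So your computation cannot ``simplify to the stated constant''. The profile is also not realized by any factor. Already at $n=6$, where Table~\ref{tnb1} can be checked, it would give $62+4\cdot 3+4\cdot 5=94$ closed factors at length $29$, whereas $M_{29}=106$. Starting from the minimizing conjugates is the wrong heuristic: their deficit $2F_{n-5}$ is only linear, and what matters at the quadratic scale is the size of the increments along the extension.

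The paper's witness is $W_n=f_{n-3}f_nf_{n-3}^{--}$, of length $F_n+2F_{n-3}-2$. Using $f_{n-1}f_n=f_ng_{n-1}$, one checks that each length-$F_n$ window of $W_n$ is either $w_n$ or some $\sigma^i(f_n)$ with $0\le i\le F_{n-3}-2$ or $F_{n-2}\le i\le F_{n-1}-2$. By Theorem~\ref{nth} each of these has exactly $\mathtt{Cl}(f_n)$ closed factors, so $W_n$ contains no minimizing conjugate at all. Its count is exact and short.
\begin{itemize}
\item Since $f_nf_{n-3}^{--}$ is a prefix of $f$, Theorem~\ref{Prethecl} gives its count.
\item For a suffix $x$ of $f_{n-3}$ one has $xf_nf_{n-3}^{--}=xf_{n-1}f_{n-1}^{--}$. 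Its largest repeated prefix is $xf_{n-1}^{--}$, whose own largest repeated prefix is $xf_{n-2}^{--}$ (Lemma~\ref{lem11}(\ref{impt6})).
\item By Lemma~\ref{lem01}, each of the $F_{n-3}$ left extensions therefore adds exactly $F_{n-3}$.
\end{itemize}
This gives $\mathtt{Cl}(W_n)=1+F_{n-3}F_n$ and the limit $\phi^3/(\phi^3+2)^2$. The missing idea is to build the factor around the prefix $f_n$, whose counts are known exactly. Then every one of the $2F_{n-3}-2$ extension steps contributes $F_{n-3}$, and none contributes only $F_{n-4}$.
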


\begin{remark} The bounds from the above theorem are quite close: $ 0.09519 \leq C_f\leq 0.10893 $. 
\end{remark}

\begin{proof}
     We first prove the upper bound of $C_f$.  For this, we will use numerical experiments. Then we will discuss the lower bounds for $C_f$ using Remark \ref{remsup} and Theorem \ref{Finalth}.
    \begin{itemize}
        \item Upper bound for $C_f : $

         To get an upper bound of $C_f$, we now compute the $\mathtt{Cl}(f_{n-3}f_n f_{n-3}^{--})$ for $n \geq 7$.
        First note that $f_{n-3} f_n f_{n-3}^{--} \in Fac(f)$. Indeed, $f_{n-3}$ is a suffix of $f_{n+2}$ and $f_n f_{n-3}^{--}$ is a prefix of $ f_{n+1} $; so, their concatenation is a factor of $f_{n+2}f_{n+1}$. As $f_n f_{n-3}^{--} \in Pref(f)$, Theorem \ref{Prethecl} gives $\mathtt{Cl}(f_n f_{n-3}^{--})$. We now compute $\mathtt{Cl}(f_{n-3} f_n f_{n-3}^{--})$, extending the factor $f_n f_{n-3}^{--}$ to the left letter by letter and applying Lemma \ref{lem01}. Let $xf_n f_{n-3}^{--} \in Suff(f_{n-3} f_n f_{n-3}^{--})$, where $x \in Suff(f_{n-3})$. 
        Since $xf_n f_{n-3}^{--} = x f_{n-1} f_{n-1}^{--}$, the word $x f_{n-1}^{--}$ is a repeated prefix of $xf_n f_{n-3}^{--}$. By Lemma \ref{lem11}(\ref{impt6}), the word $x f_{n-1}^{--}$ is the largest repeated prefix of $xf_n f_{n-3}^{--}$. 
        Also, as $x f_{n-1}^{--} = x f_{n-2} f_{n-3}^{--}$,  $x f_{n-2}^{--}$ is a repeated prefix of $x f_{n-1}^{--}$. By Lemma \ref{lem11}(\ref{impt6}), $x f_{n-2}^{--}$ is the largest repeated prefix of $x f_{n-1}^{--}$. Then, $$\mathtt{Cl}(x f_n f_{n-3}^{--}) - \mathtt{Cl}( ^{-}x f_n f_{n-3}^{--}) = |x f_{n-1}^{--}| - |x f_{n-2}^{--}| = F_{n-3}.$$ Since $x$ is an arbitrary suffix of $f_{n-3}$, applying this iteratively to suffices of length 1, 2, \ldots, $F_{n-3}$, we obtain $$\mathtt{Cl}(f_{n-3} f_n f_{n-3}^{--}) = \mathtt{Cl}(f_n f_{n-3}^{--}) + F_{n-3} F_{n-3} = 1+F_{n-3} F_n.$$ Therefore, $$\displaystyle C_f \leq \lim_{n \rightarrow \infty}\frac{1+F_{n-3} F_n}{(F_n + 2F_{n-3}-2)^2} =\frac{5\phi+3}{\phi^2(\phi^3+2)^2} \approx 0.10893. $$

        \item Lower bound of $C_f$ : 
        
        The main strategy of the proof is as follows. Let $v \in Fac(f)$ and $p \in Pref(f)$ such that $|v|=|p|=F_n+k$, where $0 \leq k < F_{n-1}$ and $n \geq 7$. From Theorem \ref{Finalth},  we have
         \begin{equation*}
            \frac{\mathtt{Cl}(v)}{|v|^2} \geq
            \begin{cases}
             \frac{1 + F_{n-3} F_{n-2} + F_{n-4} ( F_{n-3} + k +2)}{(F_n+k)^2}, & \text{if } 0 \leq k \leq F_{n-3}-1\\
            \frac{1 + F_{n-3} ( 3F_{n-4} +k)  + F_{n-2}}{(F_n+k)^2}, & \text{if } F_{n-3} \leq k \leq F_{n-1}-1.
            \end{cases}
        \end{equation*}

        We first prove that  $ \frac{1 + F_{n-3} F_{n-2} + F_{n-4} F_{n-3} + k F_{n-4} +2 F_{n-4}}{(F_n+k)^2} >  \frac{\phi^{-5} + l \phi^{-4} + \phi^{-7}}{ (l+1)^2}$ when $0 \leq k \leq F_{n-3}-1$ and $0 \leq l < \frac{1}{\phi^3}$.
        Next we show that 
         $\frac{1 + F_{n-3} ( 3F_{n-4} +k)  + F_{n-2}}{(F_n+k)^2} > \frac{ 3 \phi^{-7} + l \phi^{-3} }{ (l+1)^2}$ when $F_{n-3} \leq k \leq F_{n-1}-1$ and $\frac{1}{\phi^3} \leq l < \frac{1}{\phi}$. 
         Then, we prove that $\inf \{ \frac{\phi^{-5} + l \phi^{-4} + \phi^{-7}}{ (l+1)^2}, \frac{ 3 \phi^{-7} + l' \phi^{-3} }{ (l'+1)^2}   : 0 \leq l < \frac{1}{\phi^3}, \frac{1}{\phi^3} \leq l' < \frac{1}{\phi}  \} = \frac{\phi^3+3}{\phi^9}$.
         This implies that for any $v \in Fac(f)$ of length $F_n+k$ with $n \geq 7$ and $0 \leq k < F_{n-1}$,   $\frac{\mathtt{Cl}(v)}{|v|^2} > \frac{\phi^3+3}{\phi^9} $. Then, using the initial values obtained by computer and provided in Table \ref{tnb1}, we show that $\sup \left\{ C: \mathtt{Cl}(w)\geq C|w|^2 \mbox{ for each } w \in  Fac(f)  \right\} > \frac{ \phi^3+3 }{\phi^9}$, i.e.,   $C_f > \frac{ \phi^3+3 }{\phi^9}$ (by Remark \ref{remsup}).
         
        First we recall that $F_n = \frac{\phi^{n+2} - (1-\phi)^{n+2}}{\sqrt{5}}$ for $n \geq -1$.
         When $0 \leq k \leq F_{n-3}-1$, we set $l=k\frac{\sqrt{5}}{\phi^{n+2}}$; note that $0 \leq l < \frac{1}{\phi^3}$.
        Using $F_n = \frac{\phi^{n+2} - (1-\phi)^{n+2}}{\sqrt{5}}$ and $\phi (1-\phi) = -1$, we have : 
        \begin{align*}
            \frac{1 + F_{n-3} F_{n-2} + F_{n-4} F_{n-3} + k F_{n-4} +2 F_{n-4}}{(F_n+k)^2} 
            =  \frac{x+ a' }{y+ b'},
        \end{align*}
  where 
    \begin{flalign*} & x= \phi^{2n-1} + l \phi^{2n} + \phi^{2n-3}, \\ 
  & y= \phi^{2n+4} (l+1)^2, \\
  & a'= 5 + (1-\phi)^{2n-1} + (1-\phi)^{2n-3} -l \phi^{4} (-1)^n + 2\sqrt{5} \phi^{n-2} - 2\sqrt{5} (1-\phi)^{n-2}, \\ & b'= (1-\phi)^{2n+4} -2(-1)^n -2l (-1)^n.  \end{flalign*} 
  Now, 
   $b'  < 3$
   and
    $a' >  2\sqrt{5} \phi^{n-2}$.
   In fact, $\frac{x}{y}$ gives the asymptotics of the lower bound, and the terms $a'$ and $b'$ are bounded. However, by the definition of a closed-rich constant we need to show the inequality $\frac{x+ a' }{y+ b'} > \frac{x}{y}$; we do it separately for positive and negative values of $b'$.
    

   Consider $b' >0$. Then $\frac{x+ a' }{y+ b'} > \frac{x}{y}$ iff $\frac{a'}{b'} > \frac{x}{y}$.
   Now, $\frac{x}{y} = \frac{\phi^{2n-1} + l \phi^{2n} + \phi^{2n-3}}{\phi^{2n+4} (l+1)^2} = \frac{\phi^{-5} + l \phi^{-4} + \phi^{-7}}{ (l+1)^2} < 1$, and
 $\frac{a'}{b'} > \frac{2\sqrt{5} \phi^{n-2}}{3} >1$. So clearly, $\frac{a'}{b'} > \frac{x}{y}$.

   Consider $b'\leq 0$. We have $a'y -b'x >0$, i.e., $xy + a'y >xy +b'x$, i.e., $(x+a')y > (y+b')x$, i.e., $\frac{x+a'}{y+b'} > \frac{x}{y}$ (since $y>-b'$). This implies  $\frac{x+a'}{y+b'} > \frac{x}{y}$.

   Thus, for $0 \leq l < \frac{1}{\phi^3}$, $ \frac{1 + F_{n-3} F_{n-2} + F_{n-4} F_{n-3} + k F_{n-4} +2 F_{n-4}}{(F_n+k)^2} > \frac{x}{y} =  \frac{\phi^{-5} + l \phi^{-4} + \phi^{-7}}{ (l+1)^2}$.\\
   Similarly, we show that for $\frac{1}{\phi^3} \leq l < \frac{1}{\phi}$, $\frac{1 + F_{n-3} ( 3F_{n-4} +k)  + F_{n-2}}{(F_n+k)^2} > \frac{ 3 \phi^{-7} + l \phi^{-3} }{ (l+1)^2}$.

   Consider \begin{equation*}
        h(l)=
        \begin{cases}
        \frac{\phi^{-3}(\phi^{-2}+\phi^{-4})+ l \phi^{-4}}{(1+l)^2}, & \text{if } 0 \leq  l < \frac{1}{\phi^3}\\
         \frac{3\phi^{-7} + l \phi^{-3} }{(1+l)^2}, & \text{if } \frac{1}{\phi^3} \leq  l \leq \frac{1}{\phi}.
        \end{cases}
    \end{equation*}
    By a straightforward analysis one can see that $h(l)$ attains its minimum at  $l=\frac{1}{\phi}$, and $h(\frac{1}{\phi}) = \frac{ \phi^3+3 }{\phi^9}$. Then using the values from Table \ref{tnb1} for $n<7$, we get $\frac{ \phi^3+3 }{\phi^9}$ as a lower bound:
$$ \sup \left\{ C: \mathtt{Cl}(w)\geq C|w|^2 \mbox{ for each } w \in  Fac(f)  \right\} > \frac{ \phi^3+3 }{\phi^9}.$$
Thus, by Remark \ref{remsup}, $C_f > \frac{ \phi^3+3 }{\phi^9}$.

     \end{itemize}

 \end{proof}

 



\begin{table}[H]
    \centering
    \begin{minipage}{.3\textwidth}
      \centering
      \begin{tabular}{|p{.7cm}|p{.83cm}|p{.5cm}|p{.6cm}|}
        \hline
      $n$ & $\mathtt{PCl}(n)$ & $M_n$ & $\frac{M_n}{n^2}$ \\
        \hline
        1 & 2 & 2 & 2 \\
        2 & 3 & 3 & 0.75 \\
        3 & 4 & 4 & 0.44 \\
        4 & 5 & 5 & 0.31 \\
        5 & 7 & 6 & 0.24 \\
        6 & 9 & 9 & 0.25 \\
        7 & 11 & 10 & 0.2 \\
        8 & 13 & 13 & 0.2 \\
        9 & 16 & 15 & 0.19 \\
        10 & 19 & 17 & 0.17 \\
        11 & 22 & 22 & 0.18 \\
        \hline
      \end{tabular}
    \end{minipage}%
    \hspace{.01\textwidth} 
    \begin{minipage}{.3\textwidth}
      \centering
      \begin{tabular}{|p{.7cm}|p{.83cm}|p{.5cm}|p{.6cm}|}
       \hline
       $n$ & $\mathtt{PCl}(n)$ & $M_n$ & $\frac{M_n}{n^2}$ \\
        \hline
        12 & 25 & 24 & 0.17 \\
        13 & 28 & 26 & 0.15 \\
        14 & 31 & 31 & 0.16 \\
        15 & 36 & 34 & 0.15 \\
        16 & 41 & 37 & 0.14 \\
        17 & 46 & 40 & 0.14 \\
        18 & 51 & 48 & 0.15 \\
        19 & 56 & 56 & 0.16 \\
        20 & 61 & 59 & 0.15 \\
        21 & 66 & 62 & 0.14 \\
        22 & 71 & 65 & 0.13 \\
        \hline
      \end{tabular}
    \end{minipage}
     \hspace{.01\textwidth} 
    \begin{minipage}{.3\textwidth}
      \centering
      \begin{tabular}{|p{.7cm}|p{.83cm}|p{0.5cm}|p{.6cm}|}
        \hline
       $n$ & $\mathtt{PCl}(n)$ & $M_n$ & $\frac{M_n}{n^2}$ \\
        \hline
        23 & 76 & 73 & 0.14 \\
        24 & 81 & 81 & 0.14 \\
        25 & 89 & 86 & 0.14 \\
        26 & 97 & 91 & 0.13 \\
        27 & 105 & 96 & 0.13 \\
        28 & 113 & 101 &  0.13 \\
        29 & 121 & 106 & 0.13 \\
        30 & 129 & 119 & 0.13 \\
        31 & 137 & 132 & 0.14 \\
        32 & 145 & 145 & 0.14 \\
        33 & 153 & 150 & 0.14 \\
        \hline
      \end{tabular}
    \end{minipage}
    \caption{Values of $\mathtt{PCl}(n)$, $M_n = \min \{\mathtt{Cl}(w): w\in Fac(f) \cap \Sigma^n\}$ and $\frac{M_n}{n^2}$ for factors of length $n < F_7$.}
    \label{tnb1}
\end{table}


\section {Conclusions and open problems}\label{sec7}

Let  $C_{sup} =  \sup \{C_u: C_u \text{ is the closed-rich constant of an infinite closed-rich word } u \}$. 
From Theorems \ref{mthsd} and \ref{fibthfg}, we have the following bounds for $C_{sup}$:

\begin{corollary}
     $0.09519 <  C_{sup} \leq  0.165952$.
\end{corollary}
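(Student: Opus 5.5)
The corollary follows by combining the two main results of the paper, so the plan is to establish each inequality separately.

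For the upper bound, let $u$ be any infinite closed-rich word. Theorem \ref{mthsd} gives $C_u \leq \frac{967}{5827}$. Since this holds for every closed-rich $u$, taking the supremum yields $C_{sup} \leq \frac{967}{5827}$. It then only remains to check the decimal value: $967/5827 \approx 0.1659516\ldots$, which is at most $0.165952$.

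For the lower bound, the Fibonacci word $f$ is closed-rich, as shown in \cite{parshina2024finite}. By definition of the supremum, $C_{sup} \geq C_f$. Theorem \ref{fibthfg} gives the strict inequality $C_f > \frac{\phi^3+3}{\phi^9}$. We then evaluate this number numerically:
\[
\phi^3 = 2\phi+1 \approx 4.236068, \qquad \phi^9 = 34\phi+21 \approx 76.013156,
\]
so
\[
\frac{\phi^3+3}{\phi^9} \approx \frac{7.236068}{76.013156} \approx 0.095194,
\]
which is at least $0.09519$. Chaining the inequalities gives
\[
C_{sup} \geq C_f > \frac{\phi^3+3}{\phi^9} \geq 0.09519,
\]
and the strict inequality is preserved.

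There is no real obstacle here; the only care needed is in the decimal rounding. One must confirm that $967/5827$ does not exceed $0.165952$, and that $(\phi^3+3)/\phi^9$ is not below $0.09519$. This is done by computing a few more digits in each case.
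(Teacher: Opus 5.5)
Your proposal is correct and follows the paper's argument: the corollary is an immediate consequence of Theorem~\ref{mthsd} (which gives $C_u \le 967/5827$ for every closed-rich $u$) and Theorem~\ref{fibthfg} (which gives $C_{sup} \ge C_f > (\phi^3+3)/\phi^9$). Your decimal checks are also right, $967/5827 \approx 0.1659516$ and $(\phi^3+3)/\phi^9 \approx 0.0951949$, so both stated bounds follow, and the lower inequality would stay strict even without the strictness in Theorem~\ref{fibthfg}.
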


With computational experiments we observed that $f$ provides a better lower bound for $C_{sup}$ than several well-known infinite closed-rich words. For instance, for the Thue-Morse word (TM) we obtained $C_{TM} \leq 0.09 $, for ternary Thue-Morse word (TTM)  $C_{TTM} \leq 0.092$, for the period-doubling word (PD) $C_{PD}  \leq 0.093 $, for Dejean’s word (D)  $C_{D}  \leq 0.08 $, for  Mephisto-Waltz word (MW) $C_{MW}  \leq 0.08 $,  for Leech’s word (LE) $C_{LE}  \leq 0.07 $, for Fibonacci-Thue-Morse word (FTM) $C_{FTM}  \leq 0.062 $,   etc.

Based on numerical experiments, we propose the following conjecture 
regarding 
$M_n = \min \{ \mathtt{Cl}(w): w \in Fac(f) \cap \Sigma^n\}$. 
The following conjecture gives a formula for its first difference sequence $R_n = M_{n} - M_{n-1}$ (it is not hard to derive from it an explicit formula for $M_n$).

\begin{conjecture} \label{conj0}
\begin{align*}
    \{R_n\}_{n \geq 1}  & =1, 1, 1, 1, F_0, F_2, F_0, F_2, F_1, F_1, F_3, F_1, F_1, F_3, F_2, F_2, F_2, F_4, F_4, F_2, F_2,\\
    &F_2, F_4, F_4, \cdots  \\
    & = 1, 1, 1, 1, \displaystyle \prod_{m=0}^{\infty} {{F_m}^{F_m}, {F_{m+2}}^{F_{m-1}}, {F_m}^{F_m}, {F_{m+2}}^{F_{m-1}}  }
\end{align*}

    
\end{conjecture}

Based on Conjecture \ref{conj0}, we conjecture the value of $C_f$:
\begin{conjecture} \label{conj2}
    For $f$, the closed-rich constant $C_f= \frac{5\phi+3}{\phi^2(\phi^3+2)^2} \approx 0.10893$.
\end{conjecture}

\begin{remark} From the conjectured sequence, the minimum value of closed factors  occurs for the lengths $F_n + 2 F_{n-3} - 2$ (in fact, between these and the next lengths the first difference function increases). Computer experiments show that for each of these lengths there is only one factor giving the minimal value. Moreover, these factors are palindromes of a special kind: they have bispecial factors of length $F_n -2$ in the middle, which are continued in the unique way to the left and to the right with length $F_{n-3}$. These factors give the upper bound on $C_f$ from Theorem \ref{fibthfg}. However, for now we do not have a proof of the fact that these factors give the minimum.\end{remark}

The question about the closed-rich constant of the Fibonacci word can be naturally extended to closed-rich Sturmian words:

\begin{question}
    Given a Sturmian word $s$ of slope $\alpha$ such that its continued fraction expansion has bounded partial quotients. In \cite{parshina2024finite} is has been shown that $s$ is closed-rich. What is the closed-rich constant of $s$?
\end{question}

\begin{question}
    Is it true that any real number in $(0,C_{sup})$ is a closed-rich constant of some infinite closed-rich word? If not, is the set of closed-rich constants dense in $(0,C_{sup})$?
\end{question}

\section{Acknowledgement} 
 This study performed at the Saint Petersburg Leonhard Euler International Mathematical Institute and supported by the Ministry of Science and Higher Education of the Russian Federation (agreement no. $075-15-2022-287$). 

\bibliographystyle{abbrv}
\bibliography{main.bib}

\end{document}